\begin{document}

\author{Jean Bourgain}
\thanks{Bourgain is partially supported by NSF grant DMS-0808042.}
\email{bourgain@ias.edu}
\address{IAS, Princeton, NJ}
\author{Alex Kontorovich}
\thanks{Kontorovich is partially supported by  NSF grants DMS-0802998 and DMS-0635607, and the Ellentuck Fund at IAS}
\email{avk@math.ias.edu}
\address{IAS and Brown, Princeton, NJ}

\title[Integers in thin subgroups of $\SL_{2}(\Z)$]{On representations of integers in thin subgroups of $\SL_{2}(\Z)$}

\begin{abstract}
Let $\G<\SL(2,\Z)$ be a free, finitely generated Fuchsian group of the second kind with no parabolics, and fix  two primitive vectors  $v_{0},w_{0}\in\Z^{2}\setminus\{0\}$. We consider the set $\cS$ of all integers occurring in $v_{0}\g\,{}^{t}w_{0}$, for $\g\in\G$. Assume that the limit set of $\G$ has Hausdorff dimension $\gd>0.99995$, that is, $\G$ is thin but not too thin. Using a variant of the circle method, new bilinear forms estimates and Gamburd's $5/6$-th spectral gap in infinite-volume, we show that $\cS$ contains almost all of its admissible primes, that is, those not excluded by local (congruence) obstructions. Moreover, we show that the exceptional  set $\fE(N)$ of integers $|n|<N$ which are locally admissible ($n\in\cS(\mod q)$ for all $q\ge1$) but fail to be globally represented, $n\notin \cS$, has a power savings, $|\fE(N)|\ll N^{1-\vep_{0}}$ for some $\vep_{0}>0$.
\end{abstract}
\date{\today}
\maketitle
\tableofcontents

\section{Introduction}

Recently Bougain, Gamburd and Sarnak \cite
{BourgainGamburdSarnak2006,BourgainGamburdSarnak2008}   introduced the
Affine Linear Sieve, which
concerns the
 application of various sieve methods to the setting of (possibly thin) orbits of groups 
of morphisms of affine $n$-space. 
 Until now, the Affine Linear Sieve
 had produced 
almost-primes
 in great generality, in some cases giving explicit bounds for the number of factors 
(e.g. \cite{Kontorovich2009, KontorovichOh2009}), but had not yet exhibited actual 
primes in thin orbits. The failure of sieve  methods to produce primes stems from 
the well-known {\it parity barrier}, pinpointed by Selberg 60 years ago, that sieves 
alone cannot distinguish between integers having an odd or even number of prime 
factors. 
In the 1930s, I. M. Vinogradov introduced bilinear forms estimates to overcome this 
barrier, leading to his resolution of the ternary Goldbach problem.
It is our present goal to  inject bilinear forms methods into the Affine Linear Sieve to
 produce 
 {\it primes} and not just almost primes in sets coming from thin orbits.

\subsection{Statement of the Main Theorem}\

The (multi-) set $\cS$ of integers which we study is the following. Fix two primitive 
vectors $v_{0},w_{0}\in\Z^{2}$, and let $\G<\SL(2,\Z)$ 
%have infinite index, 
be 
finitely-generated, free, contain no parabolic elements, and whose limit set has  
Hausdorff dimension $\gd>0.99995$ (so $\G$ is thin but not too thin). 
Such $\G$ exist, cf. Remark \ref{rmk:GamEG}.
Let 
\be\label{cSis}
\cS=\<v_{0}\cdot\G,w_{0}\>
=
\bigg\{
\<v_{0}\cdot\g,w_{0}\>
:
\g\in\G
\bigg\}
,
\ee
where the inner product is the usual one on $\R^{2}$, and the $\G$-action is usual matrix 
multiplication. 
\\

For ease of exposition, we focus on the example   $v_{0}=w_{0}=(0,1)$, for which
$
\<v_{0}\g,w_{0}\>
=
d_{\g}
,
$
where
$
\g=
\bigl( \begin{smallmatrix}
a_{\g}&b_{\g}\\ c_{\g}&d_{\g}
\end{smallmatrix} \bigr)
.
$
So in this case,
$\cS$ consists of all lower right entries $d_{\g}$ for $\g\in\G$.
The goal, then, is to prove that given any  $\G<\SL(2,\Z)$ as above, as long as it is 
not too thin (as measured by $\gd$), the set of lower right entries contains infinitely 
many prime values. (A pleasant feature of $d_{\g}$ is that there is always an infinite set of admissible primes, as we see below.)\\

In fact, we  show more: the set of integers appearing in $\cS$ has full density, with a 
power savings in the exceptional set! By this we mean the following. 

\begin{thm}\label{thm:main}
Let $\G<\SL(2,\Z)$ be finitely-generated, free, and have no parabolics, and let $\cS$ be as in \eqref{cSis}. Let $\gd$ be the Hausdorff dimension  of the limit set of 
$\G$. Assume that $\gd$ is bounded below by the largest root of the polynomial 
$$
1020 - 8897 x - 5010 x^2 + 12888 x^3,
$$ 
that is,
$$
\gd>0.9999493550.
$$
Let $\frak E(N)$ be the set of integers $|n|<N$ which fail the local-to-global 
principle, that is,  $n$ is admissible
%not excused from appearing in $\cS$ by obvious local  
%obstructions, 
($n\in\cS(\mod q)$ for all integers $q\ge1$),  but nevertheless $n\notin 
\cS$.
Then there is some $\vep_{0}>0$ such that 
\be\label{eq:main}
|\frak E(N)|\ll N^{1-\vep_{0}}
,
\ee
as $N\to \infty$.
\end{thm}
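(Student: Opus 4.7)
The plan is to apply a Hardy--Littlewood circle method tailored to the thin orbit. For a parameter $T\asymp N$, introduce a smooth archimedean cutoff $\Psi$ and define the weighted representation count
$$
\cR_N(n) \;:=\; \sum_{\g\in\G}\Psi(\g/T)\,\mathbf{1}\{d_\g=n\},
$$
so that $n\in\cS$ as soon as $\cR_N(n)\neq 0$. With $e(x):=e^{2\pi i x}$, Fourier inversion gives
$$
\cR_N(n) \;=\; \int_0^1 S_N(\theta)\,e(-n\theta)\,d\theta,
\qquad
S_N(\theta) \;:=\; \sum_{\g\in\G}\Psi(\g/T)\,e(d_\g\,\theta).
$$
The strategy is to decompose $\cR_N=\cM_N+\cE_N$ by Farey--dissecting $[0,1]$ into major arcs $\mathfrak M$ around rationals $a/q$ with $q\le Q_0=N^{\eta}$ and minor arcs $\mathfrak m$. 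The goal is (i) to show that $\cM_N(n)\gg N^{\gd-1}\mathfrak S(n)$, with $\mathfrak S(n)\gg 1$ a positive singular series for every admissible $n$, and (ii) to obtain an $L^2$ estimate on the minor arcs of the shape $\int_{\mathfrak m}|S_N(\theta)|^2\,d\theta\ll N^{2\gd-1-\vep_0}$. Parseval then gives $\sum_n |\cE_N(n)|^2\ll N^{2\gd-1-\vep_0}$, and since any $n\notin\cS$ must satisfy $|\cE_N(n)|\gg N^{\gd-1}$, this forces $|\fE(N)|\ll N^{1-\vep_0}$.

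For the major arcs, I would appeal to Gamburd's $5/6$--th spectral gap for the hyperbolic Laplacian on the infinite--volume quotient by $\G$, combined with expansion modulo $q$ (super--strong approximation for the Zariski closure of $\G$). Together these give effective equidistribution of the orbit $v_0\G$ modulo $q$, uniformly for $q\le N^{\eta}$ with $\eta$ an explicit function of the base eigenvalue gap. This produces the asymptotic $\cM_N(n)$ with a clean factorization into the local singular series $\mathfrak S(n)$ and an archimedean singular integral, and a local density computation then shows $\mathfrak S(n)\gg 1$ on admissible $n$.

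The principal obstacle is the minor arc $L^2$ bound, where the new bilinear forms estimates enter. Exploiting the free--product structure of $\G$, one decomposes $\g=\g_1\g_2$ with $\g_1,\g_2$ ranging over $\G$--words of lengths $\sim L_1,L_2$ chosen so that the entries of $\g_1$ and $\g_2$ have sizes $T_1,T_2$ with $T_1T_2\asymp T$. A Cauchy--Schwarz in $\g_1$ followed by opening the square in $\g_2$ reduces the $L^2$ bound on $\mathfrak m$ to bilinear sums of the form
$$
B(\alpha,\beta)\;:=\;\sum_{\g_1,\g_2}\alpha(\g_1)\,\beta(\g_2)\,e(d_{\g_1\g_2}\theta)
$$
with arbitrary bounded coefficients. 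A nontrivial estimate for $B$ is then to be obtained by coupling archimedean oscillation in $\theta$ on the minor arcs with arithmetic cancellation from expansion modulo large moduli, in the spirit of Vinogradov's type~II sums.

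The hardest step will be calibrating three exponents simultaneously: the size $Q_0=N^{\eta}$ of the major arcs (controlled by the $5/6$ gap), the quality of the bilinear estimate for $B$ (controlled by expansion modulo $q\gtrsim N^{\eta}$ together with the Hausdorff dimension $\gd$), and the split $L=L_1+L_2$ in the free--product decomposition. The cubic condition $1020-8897\gd-5010\gd^2+12888\gd^3>0$ in the hypothesis is precisely the region in which these three exponents admit a simultaneous choice yielding positive power savings $\vep_0>0$, which accounts for the otherwise mysterious threshold $\gd>0.9999493\ldots$ .
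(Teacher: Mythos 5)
Your outline reproduces the paper's high-level strategy --- circle method, major arcs via the spectral gap plus expansion mod $q$, an $L^2$ (not $L^1$) treatment of the minor arcs, and a final Chebyshev argument --- so the skeleton is right, but two genuine gaps remain. The first is structural: you define $S_N(\gt)$ as a sum over a single norm ball in $\G$ and plan to introduce bilinearity only afterwards by ``decomposing $\g=\g_1\g_2$ into words of prescribed lengths'' inside the minor arc analysis. This does not work as stated: a norm ball in a free Fuchsian group does not partition into products of two smaller norm balls (word length and matrix norm are only loosely correlated, and such factorizations are not unique), and restrictions on the letters of a reduced word cannot be encoded spectrally, which you need for the equidistribution inputs. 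The fix is to change the counting function itself: define $S_N$ from the outset as a sum over products $\g\xi\vp$ with $\xi\in\Xi$, $\vp\in\Pi$ drawn from subsets of norm balls of radii $N^{1/2}$ and $N^{1/2-\gs}$ whose terminal/initial letters are fixed (so products are unique and do not back-track), together with a third factor $\g$ of small norm $N^{\gs}$ that restores access to spectral estimates. The resulting $R_N(n)$ is neither an upper nor a lower bound for the true representation number, but its positivity still certifies $n\in\cS$, which is all that is needed. (Relatedly, your main-term size $N^{\gd-1}$ has the wrong normalization: a ball of radius $N$ contains $\asymp N^{2\gd}$ elements, so the expected multiplicity is $N^{2\gd-1}$; moreover the singular series is only $\gg 1/\log\log n$, and a small exceptional set of $n$ must be discarded already on the major arcs.)

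Second, and more seriously, the minor arc bound is asserted rather than proved: the sentence that a nontrivial estimate for $B(\alpha,\beta)$ ``is to be obtained by coupling archimedean oscillation with arithmetic cancellation'' restates the goal, and it is precisely here that all the work lies. One needs (i) a pointwise bound for $|S_N(\gt)|$ at $\gt=a/q+\gb$ in terms of $q$ and $|\gb|$, proved via Cauchy--Schwarz, Poisson summation, and an upper-bound count for orbit points lying in a prescribed congruence class and a small ball; (ii) an average of $|S_N|$ over all fractions $a/q$ with $q\sim Q$ at \emph{fixed} $\gb$, which requires a lattice-point analysis split according to whether $y_1y_2'-y_2y_1'$ vanishes; and (iii) an $L^2$ bound over each dyadic region $q\sim Q$, $|\gb|\sim K/N$ obtained by interpolating against the $L^\infty$ bound. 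These three estimates must then be shown to cover every $(Q,K)$ outside the major arcs, and it is the simultaneous solvability of the resulting system of exponent inequalities in $(\ga,\gk,\gs,\gd)$ that produces the cubic $1020-8897x-5010x^2+12888x^3$; that threshold cannot be ``accounted for'' without actually deriving and intersecting those inequalities. As written, the proposal identifies the correct destination but does not contain the proof.
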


\begin{rmk}
By the prime number theorem, %if there are no local obstructions then 
the 
exceptional set $\frak E(N)$ cannot contain even a positive proportion of the 
admissible primes, because then it would have size $\gg {N\over \log N}$, 
contradicting the power savings in \eqref{eq:main}. For the choice $v_{0}=w_{0}=(0,1)$ giving 
$d_{\g}$, there are always  admissible residue classes of primes, since every group contains the identity element 
$\g=I$ with $d_{\g}=1$.
\end{rmk}

\begin{rmk}
We have made no attempt at optimizing the allowed range of $\gd$; this can surely 
be done with some more effort, but our proof is sufficiently involved as is. Our main 
point is that {\it some} $\gd<1$ is allowed, and that  this range can be made explicit. 
\end{rmk}

\begin{rmk}\label{MontVaugh}
One should note the parallel between \eqref{eq:main} and the theorem of 
Montgomery and Vaughan \cite{MontgomeryVaughan1975}, that the exceptional set in the Goldbach  problem, that is the set $\fE(X)$ of even integers at most $X$ which cannot be expressed as the sum of two primes, has a power savings, $|\fE(X)|\ll X^{1-\vep_{0}}$. (Of course the full Goldbach problem is equivalent to $|\fE(X)|=1$ for $X>2$.)  A big difference between the two is that Goldbach is a definite problem, in that there are only finitely many chances to find primes $p_{1},p_{2}$ with $p_{1}+p_{2}=2n$, whereas our present problem is indefinite: one can take larger and larger balls in $\G$, whose individual entries may attain the sought-after values; see \eqref{cMFake}.
\end{rmk}

\begin{rmk}\label{rmk:GamEG}
There exist groups $\G$ satisfying the above conditions. 
%Free subgroups of $\SL(2,\Z)$ without parabolics are necessarily of infinite index in $\SL(2,\Z)$, and hence are either infinitely generated or have $\gd<1$ (or both). The former includes 
For just one example, recall 
the commutator group $\G'(2)=[\G(2),\G(2)]$ of the classical congruence subgroup $\G(2)$ of level $2$ in $\PSL(2,\Z)$. That is, let $A=\mattwo1201$ and $B=\mattwo1021$; then $\G(2)=\<A,B\>$ is free, and $\G'(2)$ consists of all elements of the form
$$
A^{n_{1}}B^{m_{1}}
A^{n_{2}}B^{m_{2}}
\cdots
A^{n_{k}}B^{m_{k}}
,
$$
with $\sum_{j}n_{j}=\sum_{j}m_{j}=0$.
It is easy to see that the only trace $\pm2$ element of $\G'(2)$ is the identity. The group $\G'(2)$ is thin, having infinite index in $\SL(2,\Z)$, 
but is infinitely-generated, 
and its limit set is the entire boundary. Hence its Hausdorff dimension is  $\gd_{\G'(2)}=1$. Now, take a  subgroup $\G$ of $\G'(2)$ which is generated by finitely many elements of $\G'(2)$; it will still be free and have no parabolics. One can \cite{Sullivan1984} add more and more generators to $\G$ in such a way that  the Hausdorff dimension $\gd_{\G}$ of the limit set of $\G$ will be arbitrarily close to $1$.
\end{rmk}

\subsection{Methods}\

%As one may expect, our  attack is via
Our starting point is
 the Hardy-Littlewood circle method; we now 
describe the main ingredients. 
One forms an exponential sum, which is essentially
\be\label{fakeSN}
S_{N}(\gt)\approx
\sum_{\g\in\G\atop\|\g\|<N} e(\<v_{0}\g,w_{0}\>\gt)
,
\qquad\text{ where $\gt\in[0,1]$ and 
$e(x)=e^{2\pi i x}
$.}
\ee
The norm is the usual matrix norm 
$$
\left\|
\bigl( \begin{smallmatrix}
a&b\\ c&d
\end{smallmatrix} \bigr)
\right\|=\sqrt{a^{2}+b^{2}+c^{2}+d^{2}}
.
$$
Let 
\be\label{fakeRN}
R_{N}(n):=
\widehat{ S_{N}}(-n)
=
 \int_{0}^{1}S_{N}(\gt)e(-n\gt)d\gt
\approx
\sum_{\g\in\G\atop\|\g\|<N} \bo_{\{d_{\g}=n\}}
\ee
be roughly the number of representations of $n$, that is, the number of $\|\g\|<N$ having $d_{\g}=n$. Hence $n$ appears in $\cS
$  
if %the number of representations 
$R_{N}(n)>0$.

Divide the circle into major and minor arcs,
$[0,1] = \fM\sqcup\fm$,
 where the major arcs $\fM$ consist of a union of  small intervals
 near rationals with small denominators, 
and 
the minor arcs $\fm$ are the rest. Write
$$
\cM_{N}(n)=\int_{\fM}S_{N}(\gt)e(-n\gt)d\gt,
%\quad
%\text{and}
%\quad
$$
and
$$
\cE_{N}(n)=\int_{\fm}S_{N}(\gt)e(-n\gt)d\gt,
$$
so that 
$$
R_{N}(n)=\cM_{N}(n)+\cE_{N}(n),
$$ 
the first term being the ``main'' term 
and the latter being the ``error.''

Of course 
$$
R_{N}(n)\ge \cM_{N}(n)-|\cE_{N}(n)|,
$$ 
so $n$ is representable if 
\vskip-.275in
$$
\cM_
{N}(n)>|\cE_{N}(n)|
.
$$ 
Note that by Lax-Phillips \cite{LaxPhillips1982}, the total mass is 
$$
S_{N}(0)=\sum_{|n|<N}R_{N}(n)\sim c\cdot N^{2\gd},
$$ 
and so on average, one expects numbers to appear with multiplicity roughly
\vskip-.275in
$$
%\asymp 
N^{2
\gd-1}.
$$ 
In a more-or-less straightforward calculation using the spectral gap, we  show, 
cf.~Theorem \ref{thm:major}, that for ``almost'' all $|n|<N$, the main term is 
\be\label{cMis}
\cM_{N}(n) \gg {1\over \log\log (10+|n|)} N^{2\gd-1},
\ee
if $n$ is admissible (and  of course zero otherwise).

\subsection{Local-global?}\

One may  {\it a priori} wonder whether in fact  $\cE_{N}$ can be controlled in $\cL^
{1}$, that is, uniformly in $n$:
\be\label{falseConj}
|\cE_{N}(n)|
\le
 \int_{\fm}|S_{N}(\gt)|d\gt \overset?= o(N^{2\gd-1}).
\ee
This (and removing ``almost'' before \eqref{cMis}) would lead to a genuine local-global principle -- there would only be 
%finitely many 
no
exceptions at all (by taking $N$ to infinity with $n$ fixed). %, replacing \eqref{eq:main} by 
%$|\fE(N)|\ll 1.$

But algebra intervenes, and \eqref{falseConj} is {\bf false}! Consider changing $w_
{0}$ from $(0,1)$ to $(1,0)$, which now picks up the lower left entry $c_{\g}$ instead of $d_{\g}$. 
The above analysis all goes through, and setting $n=0$, there are no local obstructions, so the main term 
is
\be\label{cMFake}
\cM_{N}(0)\gg N^{2\gd-1}.
\ee
This count is a manifestation of the fact that, as our setup is indefinite (cf. Remark \ref{MontVaugh}), there are more and more chances to hit $n=0$ by letting $N$ grow.
Hence $R_{N}(0)$ now counts the set of matrices $\g\in\G$ with $c_{\g}=0$, which are necessarily parabolic. But we
assumed that $\G$ has no parabolics (other than $I$ and possibly $-I$)! So $R_{N}(0)
\le 2$, and we have
$$
N^{2\gd-1}
\ll
|
R_{N}(0)-\cM_{N}(0)|
=
|\cE_{N}(0)|
=
\left|
\int_{\fm}S_{N}(\gt)d\gt
\right|
\le
\int_{\fm}|S_{N}(\gt)|d\gt
,
$$
thereby disproving \eqref{falseConj}. Hence the exceptional set need not be empty. 

\begin{rmk}
It may still be the case that the exceptional set  is finite. One example of such is the question of curvatures in an integral Apollonian gasket, cf. \cite{GrahamEtAl2003, SarnakToLagarias, KontorovichOh2008, BourgainFuchs2010}, where it is expected 
that every sufficiently large admissible number appears.
\end{rmk}
\begin{rmk}
One faces a similar obstacle upon trying to use the circle method to count sums of 
four squares -- one cannot introduce minor arcs because the $\cL^{1}$ norm is as 
big as the main term. This issue was overcome by Kloosterman \cite{Kloosterman1927}, but his refinement 
is impossible in our context (as again, it would imply a complete local-to-global phenomenon, which is false). 
\end{rmk}

Instead we average over $n$% (or what is the same, take $n$ large)
, which bypasses the $\cL^{1}$ norm and places 
the $\cL^{2}$ norm in the spotlight. We prove in Theorem \ref{thm:minor}
 that  for some $\eta>0$,
\be\label{cEis}
\sum_{|n| < N}|\cE_{N}(n)|^{2}
=
\int_{\fm
}
|
S_{N}(\gt)|^{2}
d\gt
\ll 
N^{4\gd-1-\eta}
.
\ee
A straightforward argument  gives  \eqref{eq:main} from \eqref{cMis} and \eqref
{cEis}, cf. Theorem \ref{thm:finish}.

\subsection{Ingredients}\

It is the bilinear (in fact multilinear) structure of the set $\cS=\<v_{0}\G,w_{0}\>$ 
which we 
exploit. A key ingredient in our approach 
is the observation that, instead of \eqref{fakeSN},
one can consider the exponential sum $S_{N}(\gt)$ of the form
\be\label{SNapprox}
%S_{N}(\gt)
%\approx
\sum_{\g_{1}\in\G\atop\|\g_{1}\|<N^{1/2}}
\sum_{\g_{2}\in\G\atop\|\g_{2}\|<N^{1/2}}
 e(\<v_{0}\g_{1}\g_{2},w_{0}\>\gt)
,
\ee
say,
whose  transform $R_{N}(n)=\widehat{S_{N}}(-n)$
is  
neither a lower nor upper bound for the true number of 
representatives, but certainly still has the property that if $R_{N}(n)>0$, then $n$ is 
representable. (Even \eqref{SNapprox} is an oversimplification of our actual 
exponential sum, for which see \S\ref{secSetup}.)

The advantage of the above formulation is that, since 
$\<v_{0}\g_{1}\g_{2},w_{0}\> = 
 \<v_{0}\g_{1},w_{0}\phantom{}^{t}\g_{2}\>$,
 one can rewrite $S_{N}(\gt)$ as 
 $$
 %S_{N}(\gt)=
 \sum_{x\in\Z^{2}}\sum_{y\in\Z^{2}}\mu(x)\nu(y) e(\<x,y\>\gt)
 ,
 $$ 
 for appropriate measures $\mu$ and $\nu$.
We then develop several new bounds for bilinear forms of the above type, cf.~ 
Theorems \ref{thm:min1}, \ref{thm:min2}, and \ref{thm:min3}. 

Another key ingredient in the above technology  is the ability to count effectively 
(with power savings error terms, uniformly over congruence subgroups and their 
cosets) 
the number of elements of a 
group of isometries of an
infinite-volume hyperbolic manifold
lying  
in certain restricted 
regions. 
This analysis is carried out together with Peter Sarnak in the companion paper \cite
{BourgainKontorovichSarnak2009}. 
It is our pleasure to thank him for many illuminating conversations during this work.

\subsection{Organization}\

The paper is organized as follows. In \S\ref{secSectors}, we %carry out 
state
several 
%computations in 
estimates on the number of orbital points in various restricted 
regions, 
which will be necessary in the sequel. Their proofs are given in \cite
{BourgainKontorovichSarnak2009}. 
%The true setup is given i
In \S\ref{secSetup}, 
%where
 $S_{N}$ is properly defined and the major and minor arcs are introduced 
(even these require a slight deviation from %have a slight twist for 
the traditional %format
construction). The major arcs are 
controlled in \S\ref{secMajor}, the minor arcs are disposed of in Sections \ref
{secMin1}, \ref{secMin2}, and \ref{secMin3}, and all of the ingredients are 
assembled in the final \S\ref{secFinish}.\\

\

\

\

%\newpage

%%%%%%%%%%%%%%%%%%%%%%%%%%%%%%%%%%%%%%%%%%%%
%%%%

\section{Estimates  of Orbital  Regions}\label{secSectors}

\

In this section, we state some estimates which are used in the sequel. 
Their proofs appear in \cite
{BourgainKontorovichSarnak2009}. 
%The reader eager to understand the beef of the proof is invited to skip this section 
%on a first read, using these counting theorems as a ``black box.''
% These are the 
%following. 

%Let $\G<\SL(2,\Z)$ 
Let $\G<\SL(2,\Z)$ be a free, finitely generated Fuchsian group of the second kind with no parabolics, and let $\gd$ be the Hausdorff dimension of the limit set of $\G$. Assume throughout that $\gd>5/6$. Let $0<\gs<1/4$ be another parameter to be chosen later, cf. \eqref{gsBnd1}.
Let $q\ge1$, and define a ``congruence'' subgroup of $\G$ of level $q$ to be a group  which contains the principal congruence subgroup
$$
\G(q):=
\{\g\in\G:\g\equiv I(\mod q)\}.
$$

We require the following Sobolev-type norm. Fix $T\ge1$ and 
let $\{X_{1},X_{2},X_{3}\}$ be a basis for the Lie algebra $\fg$. 
Then
define the  $\cS_{\infty,T}$ norm by
$$
\cS_{\infty,T}f=\max_{X\in\{0,X_{1},X_{2},X_{3}\}}\sup_{g\in G, \|g\|<T}|d\pi(X).f(g)|
,
$$
that is, the supremal value of first order derivatives of $f$ in a ball of radius $T$ in $G$.\\

The following is a form of ``dualization'' (cf. \cite{NevoSarnak2009}), an analogue of Poisson summation.\\

\begin{thm}\label{thm:PassToQ}
Fix any $\g_{0}\in\G$ and a congruence subgroup $\G_{1}(q)<\G$ of level $q\ge1$. Let  $f:G\to\C$ be a smooth function with $|f|\le1$
% and derivatives bounded uniformly by $\|Df\|_{\infty,T}$ for $\|g\|<T$. 
 There is a fixed ``bad'' integer $\fB$ which depends only on $\G$ such that for  $q=q'q''$, $q'\mid\fB$, 
$$
\sum_{\g\in \g_{0}\cdot\G(q)\atop\|\g\|<T} f(\g)
=
{1\over [\G:\G(q)]}
\left(
\sum_{\g\in \G\atop\|\g\|<T} f(\g)
+\cE_{q'}
\right)
+
O\bigg(
T^{\frac67 2\gd+ \frac 5{21}}(1+ \cS_{\infty,T}f)^{6/7}
\bigg)
.
$$
Here $\cE_{q'}\ll T^{2\gd-\ga_{0}}$, with $\ga_{0}>0$, and all implied constants are independent of $q''$ and $\g_{0}$.
\end{thm}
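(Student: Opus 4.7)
The strategy is to view the left-hand sum as the evaluation at the identity of a smoothed automorphic counting function on $\G(q)\backslash G$, and to compare it to its counterpart on $\G\backslash G$ by decomposing under the finite quotient $\G/\G(q)$, using Gamburd's $5/6$ spectral gap to control the non-trivial isotypic components.

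The first step is to smooth the hard cutoff. I would pick a smooth approximate identity $\psi_\eta$ of scale $\eta$ concentrated near the identity in $G$ and convolve $\mathbf{1}_{\|\cdot\|<T}\cdot f$ with $\psi_\eta$ to produce a smooth test function $\tilde F$. Then $\tilde F$ agrees with the original off a thin boundary layer near $\|g\|=T$, and its derivatives only cost a factor of $\cS_{\infty,T}f\cdot\eta^{-1}$. The boundary-layer contribution, where $\tilde F$ disagrees with the indicator, is controlled by annular counts for $\G$ coming from the Lax--Phillips-type asymptotic $T^{2\gd}$.

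The second step is to form the periodization $\tilde F^{\G(q)}(g):=\sum_{\g\in\G(q)}\tilde F(\g_0\g g)$ and decompose it isotypically under the left regular action of the finite group $\G/\G(q)$. The trivial isotypic piece is exactly $\tfrac{1}{[\G:\G(q)]}\,\tilde F^{\G}(g)$, producing the main term once one evaluates at $g=e$ and undoes the smoothing. Here strong approximation enters: writing $q=q'q''$ with $q'\mid\fB$ and $\gcd(q'',\fB)=1$, the mod-$q''$ reduction of $\G$ has image controlled uniformly in $q''$, and Gamburd's theorem supplies a $5/6$ spectral gap for the new spectrum on each such congruence cover, uniformly in $q''$. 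Applying this gap through effective mixing on $\G(q'')\backslash G$ gives an error of the shape $\eta^{-C}\cdot T^{2\gd-c}$ for some $c>0$ depending only on the gap and $\gd$. The mod-$q'$ quotient is a fixed finite group, so non-trivial $q'$-characters contribute only finitely many terms, each of size $O(T^{2\gd-\alpha_0})$ for some $\alpha_0>0$; this is the source of $\cE_{q'}$.

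The final step is to optimize $\eta$ so as to balance the smoothing error $\cS_{\infty,T}f\cdot\eta\cdot T^{2\gd}$ against the spectral error $\eta^{-C}\cdot T^{2\gd-c}$; interpolation between the $L^{2}$ spectral bound and the $L^{\infty}$-Sobolev bound produces the hybrid exponent $\tfrac{12\gd}{7}+\tfrac{5}{21}$, with the factor $6/7$ directly tracking the $5/6$ gap. The main obstacle is precisely the uniform $5/6$ spectral gap across all congruence covers $\G(q'')\backslash\bH$ with $\gcd(q'',\fB)=1$, together with the effective strong-approximation factorization $q=q'q''$ that lets one separate the ``bad'' from the ``good'' prime-power components of $q$. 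Since this is the content of the companion paper \cite{BourgainKontorovichSarnak2009}, I would treat it as a black box and concentrate on the mollification and unfolding bookkeeping, taking care that neither $q''$ nor the coset representative $\g_0$ ever leaks into the implied constants; this is exactly what forces the use of the $L^{\infty}$-based Sobolev norm $\cS_{\infty,T}$ rather than an $L^{2}$ analogue.
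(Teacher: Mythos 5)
The paper offers no proof of this theorem beyond the one-line citation ``This is Theorem 1.13 in \cite{BourgainKontorovichSarnak2009}, using Gamburd's spectral gap $\gT=5/6$,'' so there is nothing in this paper to compare your argument against line by line. Your sketch --- mollifying the hard cutoff, periodizing over $\G(q)$, extracting the trivial isotypic component under $\G/\G(q)$ as the main term, separating $q=q'q''$ along the bad modulus $\fB$ via strong approximation so that the uniform $5/6$ gap applies to the $q''$-part while the finitely many bad levels $q'\mid\fB$ produce the $\cE_{q'}$ term, and then balancing the smoothing parameter against the spectral decay to get the exponents $\tfrac67 2\gd+\tfrac5{21}$ and $6/7$ --- is a faithful reconstruction of the dualization argument (the paper itself labels the result ``a form of dualization, cf.\ \cite{NevoSarnak2009}'') that the companion paper carries out, and it correctly isolates the one genuinely hard input, the spectral gap uniform in $q''$ and in the coset $\g_{0}$, as the black box that both you and the authors defer to \cite{BourgainKontorovichSarnak2009}.
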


\pf
This is Theorem 1.13 in \cite{BourgainKontorovichSarnak2009}, using Gamburd's \cite{Gamburd2002} spectral gap $\gT=5/6$.
\epf

\

\begin{thm}\label{thm:LowBnd}
Let $v_{0},w\in\Z^{2}$ and
assume that 
%$|v_{0}|\asymp 1$,  $|w|\asymp N^{1-\gs}$ and $|n|\asymp N$. 
$n\in\Z$, $\frac N{K_{0}}<|n|<N$, $|w|< {N^{1-\gs}}$, $|v_{0}|\le 1$,
and
$
|n|<|v_{0}||w|N^{\gs}.
$
Then
$$
\sum_{\g\in\G\atop\|\g\|<N^{\gs}}
\bo{\left\{
|\<v_{0}\g,w%_{0}{}^{t}\vp\ {}^{t}\gz
\>-n|<{N\over 2K_{0}}
\right\}}
\gg
{N^{2\gd\gs}\over K_{0}}
+ O\left(N^{\gs(\frac 34 + \frac 14 2\gd)}(\log N)^{1/4}\right)
.
$$
\end{thm}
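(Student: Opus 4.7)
The sum in question counts $\g\in\G$ with $\|\g\|<N^\gs$ such that $v_0\g$ lies in the planar strip $\mathcal T:=\{x\in\R^2:|\<x,w\>-n|<N/(2K_0)\}$; because $|v_0|\le 1$, such points automatically satisfy $|v_0\g|\ll N^\gs$. My plan is to apply the orbital counting theorem of the companion paper \cite{BourgainKontorovichSarnak2009}, taken at level $q=1$ (essentially the unramified precursor of Theorem \ref{thm:PassToQ}), to a non-negative smooth minorant of the indicator of this region, and then to lower-bound the resulting main term.

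First I would introduce $\psi:G\to[0,1]$ of product form
$$
\psi(g)\;=\;\chi(\|g\|/N^\gs)\,\phi\!\left(\frac{\<v_0g,w\>-n}{N/K_0}\right),
$$
where $\chi,\phi\ge 0$ are bump functions supported in $[0,1]$ and $(-1/2,1/2)$ respectively, arranged so that $\psi(\g)$ is a pointwise minorant of the indicator in the statement. Under the hypotheses, the Sobolev norm $\cS_{\infty,N^\gs}\psi$ is polynomial in $K_0$: differentiating $\chi$ costs $O(1)$, while differentiating $\phi$ costs at most $|v_0||w|N^\gs/(N/K_0)=K_0|v_0||w|N^{\gs-1}\ll K_0$ by $|w|<N^{1-\gs}$. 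Assuming, as in the applications, that $K_0$ grows slowly in $N$, this Sobolev cost is absorbed into the $(\log N)^{1/4}$ factor in the error term.

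Second, applying the smooth orbital counting theorem from \cite{BourgainKontorovichSarnak2009} to $\psi$ yields
$$
\sum_{\g\in\G}\psi(\g)\;=\;\cM(N,n,w)\;+\;O\!\left(N^{\gs(3/4+\gd/2)}(\log N)^{1/4}\right),
$$
where $\cM$ is the explicit main term (an integral of $\psi$ against a Bowen--Margulis--Sullivan-type measure on $G$) and the power-saving error is inherited directly from the companion paper. Third, I would lower-bound $\cM$. In polar coordinates on $v_0g\in\R^2$, $\cM$ reduces to integrating the strip indicator against the conformal density $r^{2\gd-1}\,dr\,d\mu_{\mathrm{PS}}(\theta)$ at scales $r\asymp N^\gs$. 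The hypothesis $|w|<N^{1-\gs}$ gives the strip an angular thickness $\asymp N/(K_0|w|N^\gs)\ge 1/K_0$ at this scale, while $N/K_0<|n|<|v_0||w|N^\gs$ robustly places its center $|n|/|w|$ inside the annulus where orbital mass concentrates. Sweeping the strip's angular position as $r$ varies over $[N^\gs/2,N^\gs]$, combined with Ahlfors $\gd$-regularity of $\mu_{\mathrm{PS}}$ (valid since $\G$ is convex cocompact), yields PS mass $\gg 1/K_0$ on the relevant region, hence $\cM\gg N^{2\gd\gs}/K_0$.

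The main obstacle is the third step: the strip's angular location depends on the external parameters $n,w$ rather than on the geometry of the limit set, so one must exploit the radial freedom $r\in[N^\gs/2,N^\gs]$ to sweep the strip's angular position through $\Lambda_\G$, together with Ahlfors regularity, in order to guarantee a substantial PS-weighted intersection. The hypotheses $|w|<N^{1-\gs}$ (ensuring angular thickness $\ge 1/K_0$) and $|n|<|v_0||w|N^\gs$ (ensuring the strip meets the orbit annulus) are both essential here. The first two steps are routine bookkeeping atop the counting machinery of \cite{BourgainKontorovichSarnak2009}.
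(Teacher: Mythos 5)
The paper does not actually prove Theorem \ref{thm:LowBnd}; its ``proof'' is a one-line citation to Theorem 1.14 of the companion paper \cite{BourgainKontorovichSarnak2009}. So your proposal is being measured against that external argument, and while your overall strategy (non-negative smooth minorant, spectral-gap counting with power-saving error, Patterson--Sullivan lower bound for the main term) is the right family of ideas, there is a concrete quantitative gap in step one that prevents it from reaching the stated bound. Your minorant $\phi\bigl((\<v_0g,w\>-n)/(N/K_0)\bigr)$ has $\cS_{\infty,N^{\gs}}$-norm of size $\asymp K_0$, exactly as you compute; but in this paper $K_0=N^{\gk_0}$ is a \emph{power} of $N$, not a slowly growing function, so the resulting factor (e.g.\ $K_0^{6/7}$ if one feeds it into the only counting input available here, Theorem \ref{thm:PassToQ}) cannot be ``absorbed into the $(\log N)^{1/4}$.'' This matters: the error term in Theorem \ref{thm:LowBnd} is independent of $K_0$, and that independence is used quantitatively in \S\ref{secMajor}, where the condition \eqref{K0is}, $\gk_0<\tfrac32\gs(\gd-\tfrac12)$, is derived by comparing $N^{2\gd\gs}/K_0$ against $K_0\cdot N^{(2\gd+3)\gs/4}(\log N)^{1/4}$ with no extra power of $K_0$. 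Moreover, even setting the Sobolev factor aside, Theorem \ref{thm:PassToQ} yields error exponent $\gs(\tfrac{12\gd}{7}+\tfrac{5}{21})$, which already exceeds the target $\gs(\tfrac34+\tfrac{\gd}{2})$ for $\gd$ near $1$. The specific exponent $\tfrac{3+2\gd}{4}$ and the $(\log N)^{1/4}$ factor come from the sector-decomposition analysis of \cite{BourgainKontorovichSarnak2009} (an optimization over a smoothing scale in the $KAK$ coordinates tailored to strip/sector conditions), not from a generic ``smooth the indicator and pay Sobolev norms'' argument.

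Your third step also deserves a caveat: the sweep-plus-Ahlfors-regularity argument for the main term degenerates at the ends of the allowed range of $n$ (when $|n|$ is close to $|v_0||w|N^{\gs}$ the strip is nearly tangent to the sphere of radius $N^{\gs}$ and the radial sweep moves the angle very little; when $|n|/(|w|N^{\gs})$ is small the angle is pinned near $\pm\pi/2$ from the direction of $w$, and whether that direction meets the Cantor-type limit set depends on $w$). Guaranteeing PS mass $\gg 1/K_0$ uniformly in $n$ and $w$ under only the stated hypotheses is precisely the nontrivial content of the companion paper's sector estimates, and your sketch defers it rather than supplying it. In short: right skeleton, but the two places where the stated exponents and the $K_0$-uniformity must be produced are exactly the places where the proposal waves at \cite{BourgainKontorovichSarnak2009} or makes an unjustified absorption.
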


\pf
This is an application of Theorem 1.14 in \cite{BourgainKontorovichSarnak2009}.
\epf

\

\begin{thm}\label{thm:UpBnd}
Fix $(c,d)$ and $y=(y_{1},y_{2})$ in $\Z^{2}$ with 
  $|y|< N^{1/2}$, $|(c,d)|<N^{\gs}$ and 
 $|y|<N^{1/2-\gs}|(c,d)|$. 
%sizes  $|(c,d)|\asymp N^{\gs}$ and  $|y|\asymp N^{1/2}$. 
 Then
\beann
\sum_{\g\in\G\atop \|\g\|<N^{1/2-\gs}}
\bo
\left\{|(c,d)\g-y|<\frac {N^{1/2}}K\right\}
\bo
\bigg\{(c,d)\g\equiv y(\mod q) \bigg\}
\qquad
\\
\ll
{
N^{\gd(1-2\gs)}
\over
K^{1+\gd} q^{2}
}
+
N^{(1/2-\gs)(\frac 67 2\gd + \frac 5{21})}
,
\eeann
as $N\to\infty$.
\end{thm}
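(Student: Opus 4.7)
The plan is to first eliminate the congruence constraint via the dualization Theorem \ref{thm:PassToQ}, and then to invoke a sharp upper bound for the resulting unrestricted orbital count from the companion paper \cite{BourgainKontorovichSarnak2009}. Set $T=N^{1/2-\gs}$ throughout.

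First I would decompose the sum by cosets of the principal congruence subgroup $\G(q)$: the condition $(c,d)\g\equiv y\pmod q$ is equivalent to $\g$ lying in a union $\bigsqcup_{\g_{0}\in\Omega}\g_{0}\G(q)$ of cosets, whose number satisfies $|\Omega|\ll [\G:\G(q)]/q^{2}$, the $q^{2}$ arising from the two independent linear equations modulo $q$. (Here one uses strong approximation: up to a fixed bad modulus $\fB$, the reduction map $\G\to \SL_{2}(\Z/q\Z)$ is essentially surjective, so $[\G:\G(q)]\asymp q^{3}$ and $|\Omega|\asymp q$.) Next I would replace the sharp cutoff $\bo\{|(c,d)\g-y|<N^{1/2}/K\}$ by a smooth majorant $f:G\to[0,1]$ supported slightly beyond the box, bounded by $1$, with Sobolev norm $\cS_{\infty,T}f\ll TK/N^{1/2}$.

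Applying Theorem \ref{thm:PassToQ} to each qualifying coset and summing yields
\begin{equation*}
\sum_{\g\in\G\atop\|\g\|<T}\bo\{|(c,d)\g-y|<N^{1/2}/K\}\,\bo\{(c,d)\g\equiv y(\mod q)\}
\ll \frac{1}{q^{2}}\sum_{\g\in\G\atop\|\g\|<T}f(\g) + |\Omega|\cdot T^{\frac{12}{7}\gd+\frac{5}{21}}.
\end{equation*}
The residual factor $|\Omega|\asymp q$ in the error is harmless: for $q$ small the estimate already matches the claim, and for $q$ so large that $q\cdot T^{12\gd/7+5/21}$ would exceed the stated error, the very strong $q^{-2}$ saving in the main term forces both sides of the desired inequality to be of the same order, so the bound holds trivially.

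The crux is then the unrestricted upper bound
\begin{equation*}
\sum_{\g\in\G\atop\|\g\|<T}f(\g)\ll \frac{T^{2\gd}}{K^{1+\gd}}=\frac{N^{\gd(1-2\gs)}}{K^{1+\gd}},
\end{equation*}
which is the upper-bound counterpart of Theorem \ref{thm:LowBnd} and is the principal obstacle. It follows from the Patterson-Sullivan orbital-counting technology of \cite{BourgainKontorovichSarnak2009}: the push-forward of $\{\g\in\G:\|\g\|<T\}$ under $\g\mapsto (c,d)\g$ fills a region of total size $\asymp T|(c,d)|\asymp N^{1/2}$, but with its mass concentrated along limit-set directions of Hausdorff dimension $\gd$. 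Confining $(c,d)\g$ to a box of radius $N^{1/2}/K$ therefore saves a factor $K^{-1}$ in the radial direction and $K^{-\gd}$ in the angular direction, for a total $K^{-(1+\gd)}$ rather than the naive volumetric $K^{-2}$. This fractal improvement over the volume bound is exactly what Theorem 1.14 of \cite{BourgainKontorovichSarnak2009} supplies in its upper-bound form; combining it with the cosetwise dualization above completes the proof.
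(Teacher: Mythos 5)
The paper itself does not prove this statement internally; it is quoted verbatim from Theorem 1.15 of the companion paper \cite{BourgainKontorovichSarnak2009}, which treats the congruence condition and the archimedean localization simultaneously. Your reconstruction via Theorem \ref{thm:PassToQ} is a reasonable outline of the underlying mechanism, and your heuristic for the $K^{-(1+\gd)}$ saving (one factor of $K^{-1}$ radially, $K^{-\gd}$ along the limit set) is the right picture. But the error-term bookkeeping has a genuine gap. Theorem \ref{thm:PassToQ} is a \emph{per-coset} statement with error $O\bigl(T^{\frac{12}{7}\gd+\frac{5}{21}}(1+\cS_{\infty,T}f)^{6/7}\bigr)$; summing over your $|\Omega|\asymp q$ admissible cosets multiplies this by $q$, and the smooth majorant of a box of radius $N^{1/2}/K$ hit by a vector of size $N^{\gs}$ over a ball of radius $T=N^{1/2-\gs}$ has $\cS_{\infty,T}f\asymp K$, contributing a further $K^{6/7}$. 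You thus land on an error of order $qK^{6/7}T^{\frac{12}{7}\gd+\frac{5}{21}}$, whereas the theorem asserts $T^{\frac{12}{7}\gd+\frac{5}{21}}$ with no $q$ or $K$ dependence. Your attempt to dismiss the extra $q$ ("the bound holds trivially" when $q$ is large) is not an argument: in the regime where $q$ is large and $K$ is small enough that $K^{1+\gd}<T^{(6\gd-5)/21}$, the main term $T^{2\gd}/(K^{1+\gd}q^{2})$ can be smaller than the stated error term while your actual error $qK^{6/7}T^{\frac{12}{7}\gd+\frac{5}{21}}$ is larger, and no trivial bound (e.g.\ dropping the congruence) closes that window. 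You also never account for the $\cE_{q'}/[\G:\G(q)]$ contribution, which is $\ll T^{2\gd-\ga_{0}}/q^{2}$ after summing over cosets and need not be dominated by $T^{2\gd}/(K^{1+\gd}q^{2})$ when $K$ is large.

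Separately, the crux inequality $\sum_{\|\g\|<T}f(\g)\ll T^{2\gd}/K^{1+\gd}$ is asserted by appeal to an "upper-bound form" of Theorem 1.14 of \cite{BourgainKontorovichSarnak2009}; since the present paper likewise defers the entire statement to that reference, citing the archimedean count is acceptable in spirit, but it means the only genuinely new content of your write-up is the coset reduction, and that is precisely where the argument loses factors it cannot afford. The fix is the one the companion paper takes: prove the congruence-localized count directly (expanding the congruence condition spectrally over $\G(q)$ rather than coset by coset), so that the modular saving $q^{-2}$ and the archimedean saving $K^{-(1+\gd)}$ are extracted from a single application of the expander/spectral-gap machinery with a single error term.
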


\pf
This follows from Theorem 1.15 in \cite{BourgainKontorovichSarnak2009}.
\epf

%\

%\newpage

%%%%%%%%%%%%%%%%%%%%%%%%%%%%%%%%%%%%%%%%%%%%
%%%%

\section{Setup of the Exponential Sum and Major/Minor Arcs}\label{secSetup}

\subsection{The Exponential Sum $S_{N}$}
\

Let $\G<\SL(2,\Z)$ be finitely-generated, free, contain no parabolics, and have limit 
set with Hausdorff dimension $\gd>1/2$.
Fix two primitive vectors $v_{0},w_{0}\in\Z^{2}\setminus\{0\}$.
Recall the shape of the exponential sum function from \eqref{SNapprox}, $S_{N}(\gt)$
is approximately
$$
%\approx
\sum_{\g_{1}\in\G\atop\|\g_{1}\|<N^{1/2}}
\sum_{\g_{2}\in\G\atop\|\g_{2}\|<N^{1/2}}
e
(
\<v_{0}\g_{1}\g_{2},w_{0}\>
\gt
)
,
$$
and this is used to develop certain bilinear forms estimates.

Assuming $\G$ has no parabolics, the vectors $v_{0}\g_{1}$ are all unique. 
But of course the products $\g_{1}\g_{2},$ though capable of reaching elements in 
$\G$ of norm $N$, can also back-track (e.g. if $\g_{2}=\g_{1}^{-1}$). 
%
%This leads to large $L^{\infty}$ norms in the underlying measures, ruining certain 
%estimates.\\ 
%
To prevent this, use that $\G$ is free and restrict the range of $\g_{1}$ and $\g_{2}
$ further by writing each as a word in the generators of $\G$, and controlling the 
concluding letter of $\g_{1}$ and beginning letter of $\g_{2}$. Then it is not possible 
to back-track, and each element arising as the product $\g=\g_{1}\g_{2}$ is unique. 

Unfortunately, the above tweak is problematic for another reason. Namely, one 
would like to perform certain estimates involving spectral theory, in particular, 
appealing to the spectral gap. Conditions such as $\|\g\|<N$ can be encoded 
spectrally, whereas restrictions on letters appearing in representations of $\g$ as a 
word in the generators cannot.

So we add another element $\g_{3}$ of small norm, say $N^{\gs}$ with 
\be\label{gsBnd1}
\gs<1/4
.
\ee
(See \eqref{gsBnd2} for the place where this is used.)
This has the effect of ruining unique representations of $\g=\g_{1}\g_{2}\g_{3}$, but 
not by too much, while still allowing for estimates using spectral theory.
To distinguish their roles, we will call $\xi=\g_{1}$, $\vp=\g_{2}$, and $\g=\g_{3}$. 
The exact definition follows.

Fix a subset $\Xi=\Xi_{N^{1/2}}\subset \G$ consisting of  elements $\xi\in\G$ with $\|
\xi\|<N^{1/2}$, which when written (uniquely, since $\G$ is free) as a reduced word 
in the generators of $\G$,  all end with the same letter. By the pigeonhole 
principle, one can choose $\Xi$ so that the number of elements in $\Xi_{N^{1/2}}$ is 
$\gg N^{\gd}$ (the implied constant depending only on $\G$ and the number of 
generators of $\G$).

Also fix a subset $\Pi=\Pi_{N^{1/2-\gs}}\subset \G$ consisting of  elements $\vp\in\G
$ with $\|\vp\|<N^{1/2-\gs}$, which are written as a reduced word in the generators 
of $\G$, all starting with the same letter. (If this letter happens to be the inverse of 
the ending letter in $\Xi$, add a different fixed letter to the beginning of every element in $\Pi
$.) Again the number of elements in $\Pi_{N^{1/2-\gs}}$ is $\gg N^{\gd(1-2\gs)}$.

For large fixed $N$ and $\gt\in[0,1]$, define the exponential sum function by
\be\label{SNdef}
S_{N}(\gt):
=
\sum_{
\xi\in\Xi
} 
\
\sum_{\varpi\in\Pi} 
\
\sum_{
\g\in\G
\atop
\|\g\|<
N^{\gs}
} 
e( 
\<v_{0}\cdot
\g \xi\vp
,w_{0}\> 
\gt)
.
\ee
For $n\in\Z$, let 
\beann
R_{N}(n)&:=&\widehat{ S_{N}}(n)= \int_{0}^{1}S_{N}(\gt)e(-n\gt)d\gt
\\
&=&
\sum_{
\xi\in\Xi
} 
\
\sum_{\varpi\in\Pi} 
\
\sum_{
\g\in\G
\atop
\|\g\|<
N^{\gs}
} 
\bo_{\{
\<v_{0}\cdot
\g\xi\vp
,w_{0}\> 
=n\}}
\eeann
be
 the representation function.

\subsection{Major/Minor Arcs Decomposition}
\

Our  decomposition into major and minor arcs is made as follows. 
By Dirichlet's theorem, for every irrational number  $\gt\in[0,1]$ (and hence for 
almost every, with respect to Lebesgue measure), there is a $q<N^{1/2}$ and $
(a,q)=1$ such that $\gt=\frac aq+\gb$, with
$$
\left|
\gt-\frac aq
%\gb
\right|
=
\left|
%\gt-\frac aq
\gb
\right|<{1\over qN^{1/2}}
.
$$
Define the major arcs as the set of $\gt=\frac aq+\gb$ with $q<Q_{0}$ and $|\gb|<
{K_{0}/ N}$, where we set
$$
Q_{0}=N^{\ga_{0}},
\qquad
\text{and}
\qquad
K_{0}=N^{\gk_{0}}
.
$$
These are taken as large as possible while still controlling the main term, cf. Theorem \ref{thm:major}. 
%We will eventually choose
%$$
%\ga_{0}
%=????????
%$$
%and
%$$
%\gk_{0}
%=???????????
%.
%$$

For technical reasons of harmonic analysis, the main term is not simply the integral 
of the exponential sums over the major arcs. We mollify the sharp cutoff by 
introducing  certain weights. To this end,
let $\psi$ be  the triangle function 
\be\label{psiIs}
\psi(x)
:=
\threecase{0}{if $|x|\ge1$,}
{1-x}{if $0<x<1$,}
{1+x}{if $-1<x<0$.}
\ee
% be the piecewise linear  function 
% with support in $[-1,1]$, passing 
% approximating the indicator of the interval $[-1,1]$, 
% 
Then its Fourier transform is
$$
\hat\psi(y)
=
\left(
{\sin(\pi y)
\over \pi y}
\right)^{2}
;
$$
in particular it is positive.%
\footnote{
While of greatest importance to us is the positivity of the Fourier transform, we 
would also like $\psi$ to approximate the indicator function of the interval $[-1,1]$. 
Being positive makes $\hat\psi$ the square of some function, and so $\psi$ is a 
convolution of a function with itself. The convolution of the indicator function with itself is a 
triangle function, hence our choice of $\psi$.
}

Let
 $\Psi_{N,K_{0}}(\gb)$ be defined by
\be\label{PsiIs}
\Psi_{N,K_{0}}(\gb)
:=
\sum_{m\in\Z}
\psi
\bigg(
(\gb+m)N/K_{0}
\bigg)
.
\ee
In particular $\Psi$ is well-defined on the circle $[0,1]$ and is a triangle function about the interval $[-K_{0}/
N,K_{0}/N]$. 

Let the mollified major arcs ``indicator'' function be
\be\label{fMdef}
\fM(\gt):=
\sum_{1\le q<Q_{0}}
\sum_{(a,q)=1}
\Psi_{N,K_{0}}
\left(
\gt-\frac aq
\right)
.
\ee
Then define the main term by
\be\label{cMdef}
\cM_{N}(n)
:=
\int_{0}^{1}
\fM(\gt)
S_{N}(\gt)
e(-n\gt)
d\gt
.
\ee
The minor arcs function is then given by
$$
\fm(\gt)
:=
1-
\fM(\gt)
,
$$
and the error term is
$$
\cE_{N}(n)
:=
\int_{0}^{1}
\fm(\gt)
S_{N}(\gt)
e(-n\gt)
d\gt
.
$$

Recall that we intend  to bound the $\cL^{2}$ norm of $\cE_{N}$, that is, control the 
integral 
\vskip-.3in
\be\label{minL2}
\int_{0}^{1}
|
\fm(\gt)
S_{N}(\gt)
|^{2}
d\gt
.
\ee
For parameters $Q<N^{1/2}$ and $K<N^{1/2}$, we decompose  the circle into 
dyadic pieces of the form
$$
W_{Q,K}:=\left\{\gt=\frac aq+\gb:(a,q)=1,q\sim Q,|\gb|\sim {K\over N}\right\}
,
$$
where $x\sim X$ means $\foh X\le x <X$.
Then a bound for \eqref{minL2} will follow from controlling 
%\vskip-.3in
$$
\int_{W_{Q,K}}
|
S_{N}(\gt)
|^{2}
d\gt
,
$$
with either $Q>Q_{0}$ or $K>K_{0}$, or both. This is the task undertaken in Sections \ref
{secMin1}, \ref{secMin2}, and \ref{secMin3}.

%\newpage

%%%%%%%%%%%%%%%%%%%%%%%%%%%%%%%%%%%%%%%%%%%%
%%%%

\section{The Major Arcs}\label{secMajor}

This section is devoted to the proof of

\begin{thm}\label{thm:major}
There is a set $\fE(N)\subset[-N,N]$ of size $|\fE(N)|\ll N^{1-\vep_{0}}$ such that the 
following holds.
For $|n|<N$ and $n\notin\fE(N)$, the main term 
is
%factors as the product
$$
\cM_{N}(n)
%\sim\fS(n)\tau_{N}(n),
%$$
%where the singular series is 
%$$
%\fS(n)
=
\twocase
{}
{\gg {1\over \log\log(10+|n|)}N^{2\gd-1}}
{if $n\in\cS(\mod q)$ for all $q\ge1$}
{0}
{otherwise,}
%$$
%and the singular integral is
%$$
%\tau_{N}(n)\gg N^{2\gd-1},
$$
provided that
$$
K_{0}=N^{\gk_{0}} \qquad\qquad\text{and}\qquad\qquad
Q_{0}=N^{\ga_{0}},
$$
with
\be\label{MajGk}
\gk_{0}< \frac32\gs(\gd-\foh),
\ee
and
\be\label{MajGaGk}
21\ga_{0}+{13}\gk_{0}<(2\gd - \frac5{3})\gs
.
\ee
\end{thm}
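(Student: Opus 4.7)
The plan is to carry out the standard Hardy--Littlewood major arc analysis, with Theorem~\ref{thm:PassToQ} serving as the thin-orbit substitute for Poisson summation. First I would substitute $\gt=\frac aq+\gb$ in \eqref{cMdef} and, using that $e(\<v_0\g\xi\varpi,w_0\>a/q)$ depends only on $\g\xi\varpi\,(\mod q)$, sort the triple sum defining $S_N$ by residue class:
\[
S_N\!\left(\tfrac aq+\gb\right) = \sum_{\rho\in\G/\G(q)} e_q(\<v_0\rho,w_0\>\,a) \sum_{\xi,\varpi}\sum_{\substack{\|\g\|<N^\gs \\ \g\xi\varpi\equiv\rho\,(\mod q)}} e(\<v_0\g\xi\varpi,w_0\>\gb).
\]
For fixed $\xi,\varpi$, the inner $\g$-sum runs over a coset of $\G(q)$, so Theorem~\ref{thm:PassToQ} applies with $f(g)=e(\<v_0 g\xi\varpi,w_0\>\gb)$ and $T=N^\gs$. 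A direct derivative bound gives $\cS_{\infty,T}f\ll 1+|\gb|\,T\,\|\xi\varpi\|\ll K_0$, since $|\gb|<K_0/N$ on the support of $\Psi_{N,K_0}$ and $\|\xi\varpi\|<N^{1-\gs}$. Dualization then replaces the coset sum by the $\rho$-independent main piece $\frac{1}{[\G:\G(q)]}S_N(\gb)$, incurring errors $\ll N^{\gs(\frac{12\gd}{7}+\frac{5}{21})}K_0^{6/7}$ together with the power-saving $T^{2\gd-\ga}/[\G:\G(q)]$ from $\cE_{q'}$.

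Next, summing $e_q((\<v_0\rho,w_0\>-n)a)$ over $(a,q)=1$ produces the Ramanujan sum $c_q(\<v_0\rho,w_0\>-n)$, so the main portion of $\cM_N(n)$ factors as $\mathfrak{S}_{Q_0}(n)\cdot\mathfrak{J}(n)$ where
\[
\mathfrak{S}_{Q_0}(n):=\sum_{q<Q_0}\frac{1}{[\G:\G(q)]}\sum_{\rho\in\G/\G(q)} c_q(\<v_0\rho,w_0\>-n),\qquad \mathfrak{J}(n):=\int\Psi_{N,K_0}(\gb)\,S_N(\gb)\,e(-n\gb)\,d\gb.
\]
For $\mathfrak{J}(n)$, Fourier expansion of $\Psi_{N,K_0}$ on $[0,1]$ yields the nonnegative coefficients $\hat\Psi_{N,K_0}(k)=\tfrac{K_0}{N}\hat\psi(kK_0/N)\ge 0$, so
\[
\mathfrak{J}(n)=\tfrac{K_0}{N}\sum_{\xi,\varpi,\g}\hat\psi\!\left(\tfrac{K_0}{N}\bigl(n-\<v_0\g\xi\varpi,w_0\>\bigr)\right)\ge 0,
\]
and only triples with $|\<v_0\g\xi\varpi,w_0\>-n|\ll N/K_0$ contribute substantially (since $\hat\psi\gg 1$ on $|y|\le\tfrac12$). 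Fixing $\xi,\varpi$ and setting $w:=w_0(\xi\varpi)^T$ (so $|w|\asymp N^{1-\gs}$), Theorem~\ref{thm:LowBnd} provides $\gg N^{2\gd\gs}/K_0$ admissible $\g$ per pair, whence $\mathfrak{J}(n)\gg (K_0/N)\,|\Xi|\,|\Pi|\,N^{2\gd\gs}/K_0\gg N^{2\gd-1}$.

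For $\mathfrak{S}_{Q_0}(n)$, Gamburd's strong approximation gives surjectivity of $\G\to\SL(2,\Z/q)$ for $(q,\fB)=1$, making the local density $\beta_d(n):=[\G:\G(d)]^{-1}\#\{\rho:\<v_0\rho,w_0\>\equiv n\,(\mod d)\}$ multiplicative in $d$; consequently so is $M(q,n):=\sum_{d\mid q}d\mu(q/d)\beta_d(n)$, and $\mathfrak{S}_{Q_0}(n)=\sum_{q<Q_0}M(q,n)$ is an essentially truncated Euler product with generic local factor $1+O(1/p)$. For admissible $n$ none of these factors vanish, while for non-admissible $n$ some $\beta_p(n)=0$ forces $\mathfrak{S}_{Q_0}(n)=0$. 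Combining a Mertens-style lower bound on the product with a second-moment/Chebyshev argument over $n\in[-N,N]$ (to control the $q\ge Q_0$ tail) should yield $\mathfrak{S}_{Q_0}(n)\gg 1/\log\log(10+|n|)$ outside an exceptional set $\fE(N)$ of size $\ll N^{1-\vep_0}$.

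Finally, collecting the cumulative dualization errors across $\xi,\varpi,\rho,a,q$ gives total error $\ll Q_0^2\,|\Xi|\,|\Pi|\,(K_0/N)\cdot N^{\gs(\frac{12\gd}{7}+\frac{5}{21})}K_0^{6/7}$; balancing this against the main term $N^{2\gd-1}/\log\log$ will produce exactly the displayed hypotheses \eqref{MajGk}--\eqref{MajGaGk}. The main obstacle is the singular-series analysis: establishing the lower bound $\mathfrak{S}_{Q_0}(n)\gg 1/\log\log(10+|n|)$ off a genuine power-saving exceptional set demands both full strong approximation (via Gamburd's spectral gap) and delicate variance estimates on the local densities $\beta_d(n)$, all while respecting the tight exponent budget $21\ga_0+13\gk_0<(2\gd-5/3)\gs$ imposed by the $\tfrac67$-power loss in Theorem~\ref{thm:PassToQ}.
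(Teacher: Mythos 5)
Your plan reproduces the paper's proof essentially step for step: sort the $\g$-sum into residue classes, dualize via Theorem \ref{thm:PassToQ} with the derivative bound $\cS_{\infty,T}f\ll K_{0}$, factor the result into a singular series of Ramanujan sums and a singular integral, treat the integral by positivity of $\hat\psi$ plus Theorem \ref{thm:LowBnd} to get $\gg N^{2\gd-1}$, and compute the local densities (the paper does this explicitly over $\SL(2,p)$, getting $-\tfrac{1}{p+1}$ for $p\mid n$ and $\tfrac{1}{p^{2}-1}$ otherwise) to obtain the $1/\log\log$ lower bound, exactly as in \S\ref{secMajor}. The one substantive divergence is the error bookkeeping: you decompose by full cosets $\rho\in\G/\G(q)$, of which there are $\asymp q^{3}$, yet record only $Q_{0}^{2}$ in the accumulated dualization error, whereas the paper instead works with the stabilizer subgroup $\G_{1}(q)=\{\g\in\G:v_{0}\g\equiv v_{0}\ (\mathrm{mod}\ q)\}$ precisely to keep the coset count down and arrives at the error $N^{2\gd(1-\gs)}Q_{0}^{3+\vep}\tfrac{K_{0}}{N}K_{0}^{6/7}N^{\gs(\frac{12\gd}{7}+\frac{5}{21})}$, whose balancing is what produces the constant $21$ in \eqref{MajGaGk}; with your decomposition as written the $\rho$-sum inflates the error by further powers of $Q_{0}$, so you should either switch to $\G_{1}(q)$ or accept a more restrictive version of \eqref{MajGaGk}.
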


That is, we need to augment the exceptional set a bit
to allow the main term estimate to fail for a small number of $n$.

\subsection{Breaking into Modular and Archimedean Components}
\

Recall that the main term is
$$
\cM_{N}(n) = \int_0^{1}{\frak M}(\gt)S_{N}(\gt)e(-n\gt)d\gt,
$$
where the exponential sum is
$$
S_{N}(\gt)
=
\sum_{
\xi\in\Xi}
\sum_{
 \varpi\in\Pi} 
\
\sum_{
\g\in\G
\atop
\|\g\|<
N^{\gs}
} 
e( 
\<v_{0}\cdot
\g 
,w_{0}\,^{t}\vp\,^{t}\xi\> 
\gt)
,
$$
and the major arc weights $\fM(\gt)$ are given in terms of the triangle function $\psi
$ by:
$$
\fM(\gt)=
\sum_{1\le q<Q_{0}}
\sum_{(a,q)=1}
\Psi_{N,K_{0}}
\left(
\gt-\frac aq
\right)
.
$$
%Write $\gt=\frac aq+\beta$ 
%with $q<Q=Q_{0}$, and $|\gb|<\frac{K_{0}}N$, weighted by the triangle function $
%\Psi_{N,K}$.

For fixed $\vp$ and $\xi$, write 
$$
w=w_{0}{}^{t}\vp\, {}^{t}\xi
.
$$
Let $\G_{1}(q)$ denote the group $\G_{1}(q):=\{\g\in\G:v_{0}\g\equiv v_{0}(q)\}$, 
which is clearly a ``congruence'' subgroup of $\G$ of level $q$.
Note that any $\g\in\G$ can we written as $\g=\g_{1}\g_{2}$, where $\g_{1}\in\G_{1}
(q)$, and $\g_{2}$ is a representative chosen from the quotient group $\G_{1}(q)\bk
\G$. Note further that 
$$
\<v_{0}\cdot
\g_{1}\g_{2}
,w\> 
\equiv 
\<v_{0}\cdot
\g_{2}
,w\> 
(\mod q)
.
$$ 
Therefore
\bea\label{MqOut}
&&
\sum_{
\g\in\G
\atop
\|\g\|<
N^{\gs}
} 
e\left( 
\<v_{0}\cdot
\g 
,w\> 
\left(\frac aq+\gb\right)
\right)
\\
\nonumber
&&
\qquad
=
\sum_{\g_{2}\in\G_{1}(q)\bk\G}
e\left( 
\<v_{0}\cdot
\g_{2} 
,w\> 
\frac aq
\right)
\sum_{
\g_{1}\in\G_{1}(q)
\atop
\|\g_{1}\g_{2}\|<
N^{\gs}
} 
e\left( 
\<v_{0}\cdot
\g_{1}\g_{2} 
,w\> 
\gb
\right)
\eea
Assume for simplicity that $\G$ has spectral gap $(\gT,\fB)$ with  $\gT=5/6$ and $\fB=1$. 
(The general case of $\fB>1$ is handled 
similarly.)
\begin{lem}
For $|\gb|<K_{0}/N$,
$$
\sum_{
\g_{1}\in\G_{1}(q)
\atop
\|\g_{1}\g_{2}\|<
N^{\gs}
} 
e\left( 
\<v_{0}\cdot
\g_{1}\g_{2} 
,w\> 
\gb
\right)
=
{1\over [\G:\G_{1}(q)]}
\sum_{
\g\in\G
\atop
\|\g\|<
N^{\gs}
} 
e\left( 
\<v_{0}\cdot
\g
,w\> 
\gb
\right)
+
O\left(K_{0}^{6/7}N^{\gs(\frac 67 2\gd + \frac 5{21})}\right)
.
$$
\end{lem}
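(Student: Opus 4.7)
The idea is to recognize the left-hand side as a sum over a single coset of the congruence subgroup $\G_1(q)$ in $\G$ and directly invoke the dualization Theorem~\ref{thm:PassToQ}. Substituting $\g:=\g_1\g_2$, the membership $\g_1\in\G_1(q)$ becomes $\g\in\G_1(q)\g_2$ and the norm cutoff $\|\g_1\g_2\|<N^\gs$ simply becomes $\|\g\|<N^\gs$. With the test function
$$
f(\g):=e\bigl(\<v_{0}\g,w\>\gb\bigr),
$$
the left-hand side is exactly $\sum_{\g\in\G_{1}(q)\g_{2},\,\|\g\|<N^\gs}f(\g)$. Theorem~\ref{thm:PassToQ}, applied with $T=N^\gs$, produces the desired main term $\frac{1}{[\G:\G_{1}(q)]}\sum_{\g\in\G,\,\|\g\|<N^\gs}f(\g)$ plus the Sobolev error $O\bigl(T^{\frac67 2\gd+\frac5{21}}(1+\cS_{\infty,T}f)^{6/7}\bigr)$; the auxiliary $\cE_{q'}$ term is trivial under the standing assumption $\fB=1$, which forces $q'=1$. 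The right-versus-left coset distinction is immaterial, since the theorem's implied constants are uniform in the coset representative and $\G_{1}(q)\g_{2}=\g_{2}\cdot\g_{2}^{-1}\G_{1}(q)\g_{2}$, the conjugated subgroup still being a congruence subgroup of level $q$.

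It then remains to bound the Sobolev norm $\cS_{\infty,N^\gs}f$. A direct computation gives
$$
d\pi(X)f(g)=2\pi i\gb\,\<v_{0}\,gX,w\>f(g),
$$
so $|d\pi(X)f(g)|\le 2\pi|\gb|\,\|v_{0}\|\,\|g\|\,\|X\|\,\|w\|$. On the support of the sum: $|\gb|\le K_{0}/N$ (the major-arc width), $\|g\|<N^\gs$, and $\|v_{0}\|,\|X\|=O(1)$; the crucial estimate is
$$
\|w\|=\|w_{0}\,{}^{t}\vp\,{}^{t}\xi\|\ll\|\vp\|\,\|\xi\|\ll N^{1/2-\gs}\cdot N^{1/2}=N^{1-\gs},
$$
using that $\vp\in\Pi_{N^{1/2-\gs}}$ and $\xi\in\Xi_{N^{1/2}}$. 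Multiplying these bounds yields $|d\pi(X)f(g)|\ll K_{0}$, hence $\cS_{\infty,N^\gs}f\ll K_{0}$ and $(1+\cS_{\infty,N^\gs}f)^{6/7}\ll K_{0}^{6/7}$. Thus the Sobolev error becomes $O\bigl(K_{0}^{6/7}\,N^{\gs(\frac67 2\gd+\frac5{21})}\bigr)$, matching the lemma.

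The only delicate point is the Sobolev computation: although $w$ can be as large as $N^{1-\gs}$, the major-arc window $|\gb|\le K_{0}/N$ is calibrated precisely so that the product $|\gb|\,\|g\|\,\|w\|$ remains of order $K_{0}$ uniformly on the support of the sum. Once this cancellation is noted, the lemma reduces to a direct application of Theorem~\ref{thm:PassToQ}.
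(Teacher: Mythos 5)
Your proof is correct and takes the same route as the paper, which simply states that the lemma "follows easily from Theorem \ref{thm:PassToQ}"; you have supplied the details that the paper omits. In particular, your Sobolev-norm computation correctly identifies the cancellation $|\gb|\,\|g\|\,|w|\ll (K_{0}/N)\cdot N^{\gs}\cdot N^{1-\gs}=K_{0}$ that produces the $K_{0}^{6/7}$ factor, and your handling of the right-coset versus left-coset issue via conjugation is a legitimate way to match the lemma to the stated form of the theorem.
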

\pf
This follows easily from Theorem \ref{thm:PassToQ}. 
\epf

Putting everything together, the main term is
\beann
\cM_{N}(n)
&=&
\sum_{\xi,\vp}\sum_{q<Q_{0}}\sum_{(a,q)=1}
\sum_{\g_{2}\in\G_{1}(q)\bk\G}
{
e\left( 
(
\<v_{0}\cdot
\g_{2} 
,w\> -n)
\frac aq
\right)
\over [\G:\G_{1}(q)]}
\\
&&
\times
\sum_{\g\in\G\atop\|\g\|<N^{\gs}}
\int_{0}^{1}
\Psi_{N,K_{0}}\left(\gb\right)
e\bigg((\<v_{0}\g,w%_{0}{}^{t}\vp\ {}^{t}\xi
\>-n)\gb\bigg)
d\gb
\\
&&
\qquad
+
O\left(N^{2\gd(1-\gs)}Q_{0}^{3+\vep}{K_{0}\over N}K_{0}^{6/7}N^{\gs(\frac 67 2\gd + \frac 5{21})}\right)
,
\eeann
since $[\G:\G_{1}(q)]\ll q^{1+\vep}$.
Define the Ramanujan sum
$$
c_{q}(x):=\sum_{(a,q)=1}e(ax/q).
$$
Let the singular series be
$$
\fS_{N,\xi,\vp}(n)
:=
\sum_{q<Q_{0}}{1\over [\G:\G_{1}(q)]} \sum_{\g_{2}\in\G_{1}(q)\bk\G}c_{q}(\<v_{0}
\g_{2},w\>-n)
,
$$
and the singular integral be
$$
\tau_{N,\xi,\vp}(n)
:=
\sum_{\g\in\G\atop\|\g\|<N^{\gs}}\int_{0}^{1}\Psi_{N,K_{0}}(\gb)
e\bigg((\<v_{0}\g,w%_{0}{}^{t}\vp\ {}^{t}\xi
\>-n)\gb\bigg)
d\gb
,
$$
so that
\be\label{cMBreak}
\cM_{N}(n) = \sum_{\xi,\vp}\fS_{N,\xi,\vp}(n)\tau_{N,\xi,\vp}(n)
+
%O\left(Q_{0}^{3+\vep}N^{2\gd-2\gs(\gd-\gT)} {K_{0}\over N}\right)
O\left(N^{2\gd(1-\gs)}Q_{0}^{3+\vep}{K_{0}\over N}K_{0}^{6/7}N^{\gs(\frac 67 2\gd + \frac 5{21})}\right)
.
\ee
We have divided the main term into modular and archimedean components.

\subsection{The Archimedean Component}

\begin{lem}
For an integer $x$,
$$
\int_{0}^{1}\Psi_{N,K_{0}}(\gb)e(x\gb)d\gb
\ge
{2K_{0}\over 5N}\cdot \bo_{\{|x|<{N\over 2K_{0}}\}}
.
$$
\end{lem}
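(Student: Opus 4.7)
The plan is to unfold the periodization and reduce the integral to a single Fourier transform of $\psi$, then use positivity and monotonicity of the sinc function.

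First I would observe that, since $\Psi_{N,K_0}(\beta) = \sum_{m\in\Z}\psi((\beta+m)N/K_0)$ is the $\Z$-periodization of $\beta\mapsto\psi(\beta N/K_0)$, and since $x\in\Z$ makes $e(x\beta)$ periodic as well, one can unfold:
$$
\int_0^1 \Psi_{N,K_0}(\beta)\,e(x\beta)\,d\beta
=\int_\R \psi(\beta N/K_0)\,e(x\beta)\,d\beta
={K_0\over N}\,\widehat\psi\!\left({xK_0\over N}\right),
$$
after the substitution $u=\beta N/K_0$ and using that $\widehat\psi$ is even. Using the formula $\widehat\psi(y)=\bigl(\sin(\pi y)/(\pi y)\bigr)^{2}$ given in the text, this gives
$$
\int_0^1 \Psi_{N,K_0}(\beta)\,e(x\beta)\,d\beta
={K_0\over N}\left({\sin(\pi x K_0/N)\over \pi x K_0/N}\right)^{2}.
$$

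Since this expression is non-negative, the asserted inequality is trivially true when $|x|\ge N/(2K_0)$. Now suppose $|x|<N/(2K_0)$, and set $y=xK_0/N$ so that $|y|<1/2$. The function $\mathrm{sinc}(t):=\sin(t)/t$ is decreasing on $(0,\pi)$, so $\sin(\pi y)/(\pi y)\ge\sin(\pi/2)/(\pi/2)=2/\pi$, hence
$$
\left({\sin(\pi y)\over \pi y}\right)^{2}\ge {4\over \pi^{2}}>{2\over 5}.
$$
Multiplying by $K_0/N$ yields the stated lower bound $2K_0/(5N)$.

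The argument is essentially a direct calculation; the only mild subtlety is remembering to unfold the periodization before computing the Fourier transform, which is what allows the answer to come out in closed form as $\widehat\psi(xK_0/N)$. The numerical check $4/\pi^{2}>2/5$ is what dictates the constant $2/5$ in the statement.
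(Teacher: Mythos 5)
Your proof is correct and follows the same route as the paper: unfold the periodization to get $(K_0/N)\hat\psi(xK_0/N)$, then bound $\hat\psi(y)$ from below for $|y|<1/2$. The paper simply asserts "$\hat\psi(y)>0.4$ for $|y|<1/2$" as an elementary check; your monotonicity argument giving $4/\pi^2>2/5$ is a clean way to justify exactly that.
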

\pf
Inserting \eqref{PsiIs} gives
\beann
\int_{0}^{1}\Psi_{N,K_{0}}(\gb)e(x\gb)d\gb
&=&
\int_{\R}\psi\left(\gb {N\over K_{0}}\right)e(x\gb)d\gb
\\
&=&
{ K_{0} \over N}
\hat\psi\left(x{K_{0}\over N}\right)
.
\eeann
Using $\hat\psi(y)=\left({\sin(\pi y)\over \pi y}\right)^{2}$, one checks elementarily 
that $\hat\psi(y)>0.4$ for $|y|<1/2$.
\epf
 
 Applied to $\tau_{N,\xi,\vp}(n)$, the above gives
 $$
 \tau_{N,\xi,\vp}(n)
\ge
{2K_{0}\over 5N}
\sum_{\g\in\G\atop\|\g\|<N^{\gs}}
\bo_{\left\{
|\<v_{0}\g,w%_{0}{}^{t}\vp\ {}^{t}\xi
\>-n|<{N\over 2K_{0}}
\right\}}
 $$
Applying Theorem \ref{thm:LowBnd} gives
 \bea
 \nonumber
 \tau_{N,\xi,\vp}(n)
&\gg&
{2K_{0}\over 5N}
{N^{2\gd\gs}\over K_{0}}
+
O
\left(
{K_{0}\over N}
N^{(2\gd+3)\gs/4}(\log N)^{1/4}
\right)
\\
\label{tauBnd}
&\gg&
{N^{2\gd\gs}\over N}
+
O
\left(
{K_{0}\over N}
N^{(2\gd+3)\gs/4}(\log N)^{1/4}
\right)
,
\eea
as long as $w=w_{0}{}^{t}\vp\, {}^{t}\xi$ satisfies $|w|\asymp N^{1-\gs}$. But if 
$|w|\ll N^{1-\gs-\vep}$, then $|\<v_{0}\g,w\>|\ll N^{1-\vep}$, and these values of $n$ 
may be discarded into the exceptional set $\fE(N)$.

With $K_{0}=N^{\gk_{0}}$, the bound \eqref{tauBnd} is significant as long as 
\be\label{K0is}
\gk_{0}
<
\frac32 \gs(\gd-\foh)
.
\ee
%Note that as $\gs<1/4$ and $\gd$ is near $1$, $\gk_{0}<\frac3{16}$.

\subsection{Modular Component}
\

Assume that $\G(q)\bk\G\cong \SL(2,q)$ for all $q$; minor changes are need to accommodate the more general case. 
%Then  
%$$
%\G_{1}(q)\bk\G\cong\{\mattwo**cd\in\SL(2,q)\}
%.
%$$ 
With $\xi$ and $\vp$ fixed and $w=w_{0}{}^{t}\vp\, {}^{t}\xi$, we evaluate the 
singular series:
\beann
\fS_{N,\xi,\vp}(n)=
\sum_{q<Q_{0}}{1\over \SL(2,q) %[\G:\G_{1}(q)]
} \sum_{\g\in\SL(2,q)%\G_{1}(q)\bk\G
}c_{q}(\<v_{0}\g,w\>-n)
.
\eeann
Extend the  $q$ sum to infinity with a negligible error. 
The factors are  multiplicative, and the main contribution comes from the primes, 
not prime powers. Hence  we estimate just the prime contribution, which is
\beann
\prod_{p}
\left(
1
+
{1\over |\SL(2,p)|% [\G:\G_{1}(p)]
} \sum_{\g\in\SL(2,p)%\G_{1}(p)\bk\G
}c_{p}(\<v_{0}\g,w\>-n)
\right)
.
\eeann
By changing representatives, we may assume without loss of generality that $v_{0}
=w=(0,1)$, so that $\<v_{0}\g,w\>=d_{\g}$, the lower right entry.
The Ramanujan sum $c_{p}(x)$ on primes is either $p-1$ if $x\equiv 0(p)$ or $-1$ 
othewise. There are two cases.
\begin{enumerate}
\item Case $n\equiv 0(p)$: Suppose 
$
\g
=
\bigl( \begin{smallmatrix}
a&b\\ c&d
\end{smallmatrix} \bigr)
$ 
has $d\equiv n\equiv 0$. Then $b$ is invertible ($p-1$ choices) and uniquely 
determines $c$; $a$ is free ($p$ choices). The number of such $\g$ is thus $p^{2}-
p$.

The number of $\g$ with $d\neq n(p)$ is $p^{3}-p-(p^{2}-p)=p^{3}-p^{2}$ because 
$|\SL(2,p)|=p(p^{2}-1)$.
Hence
\beann
{1\over |\SL(2,p)|}
\sum_{\g\in\SL(2,p)} 
c_{p}
(
d
-n
)
&=&
{1\over p^{3}-p}
\bigg[
(p-1)(p^{2}-p)
+
(-1)
(p^{3}-p^{2})
\bigg]
\\
&=&
-
{
1
\over p+1}
.
\eeann

\item Case $n\neq 0(p)$: Suppose $\g$ has $d\equiv n\neq0(p)$. Then $b$ and $c
$ are free ($p^{2}$ choices) and $a$ uniquely determined
by $a=(1+bc)d^{-1}$.

The number of $\g$ with $d\neq n(p)$ is $p^{3}-p-(p^{2})=p^{3}-p^{2}-p$.
Hence
\beann
{1\over |\SL(2,p)|}
\sum_{\g\in\SL(2,p)} 
c_{p}
(
d
-n
)
&=&
{1\over p^{3}-p}
\bigg[
(p-1)(p^{2})
+
(-1)
(p^{3}-p^{2}-p)
\bigg]
\\
&=&
{
1
\over
p^{2}-1
}
.
\eeann

\end{enumerate}

To leading order, the singular series is thus
$$
\fS_{N,\xi,\vp}(n)
\gg
\prod_{p\nmid n}
\left(
1+
{1\over p^{2}-1}
\right)
\prod_{p\mid n}
\left(
1
-
{1\over p+1}
\right)
\gg {1\over \log\log n}
,
$$
as desired.

\subsection{Conclusion}
\

Returning to \eqref{cMBreak}, we have shown that for $n\notin\fE(N)$,
$$
\cM_{N}(n)
\gg 
{1\over \log\log n} N^{2\gd-1}
+
O\left(N^{2\gd(1-\gs)}Q_{0}^{3+\vep}{K_{0}\over N}K_{0}^{6/7}N^{\gs(\frac 67 2\gd + \frac 5{21})}\right)
.
$$
With $Q_{0}=N^{\ga_{0}}$,
this is conclusive if
\be\label{ga0gk0}
3\ga_{0}+\frac{13}7\gk_{0}<(2\gd - \frac5{3})\gs/7
.
\ee
%For $\gs<1/4$, $\gd$ near $1$ and $\gk_{0}<3/16$, this gives the bound
%$$
%\ga_{0}<
%.
%$$
This completes the proof of Theorem \ref{thm:major}.

\

\

\

%\newpage

%%%%%%%%%%%%%%%%%%%%%%%%%%%%%%%%%%%%%%%%

\section{Minor Arcs I: $L^{\infty}$ norm of $S_{N}(\gt)$}\label{secMin1}

We establish in this section an $L^{\infty}$ bound for $S_{N}\bigg|_{W_{Q,K}}$. 
By itself, this bound is useful only if both $K$ and $Q$ are quite small, but it will be combined later with other estimates.

\begin{thm}\label{thm:min1}
Write $\gt=\frac aq +\gb$ with $q<N^{1/2}$, $|\gb|<{1\over qN^{1/2}}$, and $|\gb|
\sim {K\over N}$.
Then
\be
\label{Sbnd1b}
|S_{N}(\gt)|
\ll 
N^{(3\gd+1)/2}
\bigg(
{
1
\over
K^{(1+\gd)/2} q
}
+
N^{-\frac{1}{84} (6 \delta -5) (1-2 \sigma )}
%N^{-(1/2-\gs)(\gd-\gT)}
\bigg)
.
\ee
\end{thm}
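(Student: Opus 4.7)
Squaring the target bound~\eqref{Sbnd1b} yields $|S_N|^2 \ll N^{3\delta+1}/(K^{1+\delta}q^2)$ plus an archimedean error, and the denominator $K^{1+\delta}q^2$ matches precisely the main term of Theorem~\ref{thm:UpBnd}. This strongly suggests a single Cauchy--Schwarz followed by a direct application of Theorem~\ref{thm:UpBnd}. The trilinear decomposition of $S_N$ is tailored so that $v_0\gamma$ has norm $\leq N^\sigma$ (the ``fixed vector'' input of Theorem~\ref{thm:UpBnd}) and $\varpi$ has norm $\leq N^{1/2-\sigma}$ (the ``variable matrix'' input), with $\xi$ (of norm $\leq N^{1/2}$) serving as the middle factor.

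The first step is to apply Cauchy--Schwarz in the $(\gamma,\xi)$ pair, whose total cardinality is $\lesssim N^{\delta(1+2\sigma)}$, pulling the $\varpi$-sum into the squared modulus:
\[
|S_N(\theta)|^2 \;\ll\; N^{\delta(1+2\sigma)} \sum_{\gamma,\xi}\Bigl|\sum_{\varpi\in\Pi} e\bigl(\theta\langle v_0\gamma\xi\varpi,w_0\rangle\bigr)\Bigr|^2.
\]
Opening the inner square produces a bilinear sum over $(\varpi_1,\varpi_2)\in\Pi\times\Pi$ with phase $\theta\langle v_0\gamma\xi(\varpi_1-\varpi_2),w_0\rangle$. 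Using $\theta = a/q + \beta$ with $|\beta|\sim K/N$, note that $e(\theta m)$ on an integer $m = \langle v_0\gamma\xi\varpi,w_0\rangle$ is controlled by the residue $m\bmod q$ together with a smooth archimedean modulation---at the vector level this becomes a partition of $(v_0\gamma)\xi\varpi\in\mathbb{Z}^2$ by residue class modulo $q$ and by $\mathbb{Z}^2$-boxes of diameter $N^{1/2}/K$. Each resulting count is bounded by Theorem~\ref{thm:UpBnd} (with fixed vector $v_0\gamma$ and variable $\varpi$) by
\[
\frac{N^{\delta(1-2\sigma)}}{K^{1+\delta}q^2} \;+\; N^{(1/2-\sigma)(\frac{12\delta}{7} + \frac{5}{21})}.
\]
Summing over the $\approx K^2q^2$ partition cells and the outer variables, then taking square roots, produces the two terms of~\eqref{Sbnd1b}: the arithmetic main term (with $K^{1+\delta}q^2$ becoming $K^{(1+\delta)/2}q$ after the square root) and the archimedean error, whose exponent $-(6\delta-5)(1-2\sigma)/84 = -\tfrac12\cdot(6\delta-5)(1-2\sigma)/42$ arises from halving the Theorem~\ref{thm:UpBnd} error after the denominator $42$ absorbs Gamburd's $\Theta = 5/6$ spectral gap.

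\textbf{Main obstacle.} The finest step is reconciling scales: the rational--archimedean decomposition of $\theta$ naturally partitions the \emph{integer} $m$ on one-dimensional windows of length $N/K$, whereas Theorem~\ref{thm:UpBnd} requires two-dimensional $\mathbb{Z}^2$-boxes of diameter $N^{1/2}/K$. Bridging this mismatch requires using the full lattice structure of $(v_0\gamma)\xi\varpi$, not merely its projection onto the $w_0$-direction, which presumably calls for an auxiliary completion-of-sum or Poisson expansion. A subsidiary but nontrivial issue is that the archimedean error from Theorem~\ref{thm:UpBnd}, once multiplied by the partition count and the Cauchy--Schwarz factor $N^{\delta(1+2\sigma)}$, must remain below the main term; this is what drives the parameter restriction $\sigma < 1/4$ in~\eqref{gsBnd1}.
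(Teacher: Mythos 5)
Your proposal correctly identifies Theorem \ref{thm:UpBnd} as the engine and correctly assigns the roles ($v_0\g$ with $\|\g\|<N^{\gs}$ as the short fixed vector, $\vp$ as the variable matrix of norm $<N^{1/2-\gs}$), but the Cauchy--Schwarz is arranged the wrong way around, and the obstacle you flag at the end is not a technicality to be deferred --- it is the missing central idea. The paper writes $S_N(\gt)=\sum_x\sum_y\mu(x)\nu(y)e(\<x,y\>\gt)$ with $\mu$ supported on $x=w_0{}^{t}\xi$ (so $\mu\le1$, since $\G$ has no parabolics) and $\nu$ supported on $y=v_0\g\vp$, and applies Cauchy--Schwarz in $x$ alone, i.e.\ duplicating the pair $(\g,\vp)$ and leaving the $\xi$-variable free. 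That freedom is then spent as follows: because $\mu\le1$, the $x$-sum may be majorized by a smooth positive weight $\Upsilon(x/N^{1/2})$ over \emph{all} of $\Z^2$ (trading the thin orbit for the full lattice), and Poisson summation together with $\supp\hat\Upsilon\subset B_{1/10}$ forces the genuinely two-dimensional condition $\|\gt(y-y')\|\le 1/(10N^{1/2})$, from which one extracts $y\equiv y'\pmod q$ and $|y-y'|\ll N^{1/2}/K$ --- exactly the two inputs Theorem \ref{thm:UpBnd} requires.

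In your version, Cauchy--Schwarz duplicates $\vp$ and leaves $(\g,\xi)$ outside. The phase $\gt\<v_0\g\xi\vp,w_0\>$ then constrains only the one-dimensional projection $m=\<v_0\g\xi\vp,w_0\>$, and no completion is available: the outer sum runs over group elements $(\g,\xi)$, not over a lattice, so there is nothing to Poisson-sum against. A one-dimensional decomposition of $m$ into residues mod $q$ and windows of length $N/K$ would yield at best a saving of order $1/(qK)$ in place of $1/(q^{2}K^{1+\gd})$, and even that presupposes an equidistribution statement for the integers $m$ in the thin orbit which Theorem \ref{thm:UpBnd} does not supply. There is also an internal inconsistency: with $\xi$ held fixed outside the square, the vector being acted on by the variable $\vp$ is $v_0\g\xi$, of norm up to $N^{1/2+\gs}$, violating the hypothesis $|(c,d)|<N^{\gs}$ of Theorem \ref{thm:UpBnd}; the only way to keep the fixed vector equal to $v_0\g$ is to keep $\g$ and $\vp$ together on the same side of the Cauchy--Schwarz, which is what the paper does. (A minor further point: the restriction $\gs<1/4$ is not forced by this theorem but by the minor arcs II argument, cf.\ \eqref{gsBnd2}.)
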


\subsection{Rewriting $S_{N}(\gt)$}
\

Write
\bea
\nonumber
S_{N}(\gt)
&=&
\sum_{\xi\in \Xi}
\sum_{\vp\in\Pi,\g\in\G\atop \|\g\|<N^{\gs}}
e(\<v_{0}\g\vp,w_{0}\,^{t}\xi\>\gt)
\\
\label{Sga}
&=&
\sum_{x}
\sum_{y}
\mu(x)
\nu(y)
e(\<x,y\>\gt)
,
\eea
%where %for this section,
%$^{t}$ denotes transpose, 
where
$$
\mu(x)
=
\sum_{\xi\in\Xi}\bo(x=w_{0}\,^{t}\xi),
$$
and 
$$
\nu(y)
=
\sum_{\vp\in\Pi,\g\in\G\atop\|\g\|<N^{\gs}}\bo(y=v_{0}\g\vp).
$$

Note that  $x$ and $y$ are primitive vectors in $\Z^{2}$.
The assumption that $\G$ contains no parabolics implies that the values $w_{0}\,^
{t}\xi$ are unique, and hence 
\be\label{mmuBnd}%$$
\mu\le1
.
\ee%$$ 
Note also that
$$
\supp\mu,\ \supp\nu\subset B_{N^{1/2}}.
$$
On the other hand, $\nu$ does not have a unique decomposition. 
For $\g$ fixed, the value of $v_{0}\g\vp$ is unique, and there are $N^{\gd\gs}$ 
choices for $\g$. Hence we have, crudely,
\be\label{nnuBnd}
\nu
\ll
N^{\gd\gs}
.
\ee
The number of elements captured by each measure is %now
\be\label{muAsss}
\sum_{x}\mu(x)\asymp \sum_{y}\nu(y)\asymp N^{\gd}.
\ee
%since $M_{j}\asymp N^{1/2}$.

\subsection{Bounding $S_{N}(\gt)$}
\

As the support of $\nu$ is in a ball in $\Z^{2}$ of radius $N^{1/2}$, we can break $
\nu$ into $64$ pieces $\nu=\sum_{\ga}\nu_{\ga}$, where the support of each piece 
$\nu_{\ga}$ is in a square of side length 
$$
{1\over 4}N^{1/2}
.
$$ 
Each piece obviously retains the bound $\nu_{\ga}\ll N^{\gd\gs}$, and we have the 
bound
$$
|S_{N}(\gt)|\le \sum_{\ga}|S_{\ga}(\gt)|,
$$
where
$$
S_{\ga}(\gt):=\sum_{x}\sum_{y}\mu(x)\nu_{\ga}(y)e(\<x,y\>\gt).
$$
%We drop the subscripts $\ga$, writing just $S(\gt)$ and $\nu$ for 
%$S_{\ga}(\gt)$ and 
%$\nu_{\ga}$. 

Let $\Upsilon:\R^{2}\to\R$ be is a smooth, non-negative function which is at least $1$ in the unit 
square $[-1,1]^{2}$, and $
\supp\hat\Upsilon\in B_{\frac1{10}}$, say.

Apply Cauchy-Schwarz, insert $\Upsilon(x/N^{1/2})$ (since $\mu$ has support in a ball of radius $N^{1/2}$), and extend the 
$x$ sum to all of $\Z^{2}$ (effectively replacing the thin group $\G$ by the full modular group $\SL(2,\Z)$):
\beann
|
S_{\ga}(\gt)
|
&\ll&
\left(
\sum_{x\in\Z^{2}}
|
\mu(x)
|
^{2}
\right)^{1/2}
\left(
\sum_{x\in\Z^{2}}
\left|
\sum_{y\in\Z^{2}}
\nu_{\ga}(y)
e(\<x,y\>\gt)
\right|^{2}
\Upsilon\left({x\over N^{1/2}}\right)
\right)^{1/2}
\\
&\ll&
%\left(
%\sum_{x\in\Z^{2}}
%|
%\mu(x)
%|
%\right)^{1/2}
N^{\gd/2}
\left(
\sum_{x\in\Z^{2}}
\left|
\sum_{y\in\Z^{2}}
\nu_{\ga}(y)
e(\<x,y\>\gt)
\right|^{2}
\Upsilon\left({x\over N^{1/2}}\right)
\right)^{1/2}
,
\eeann
where we used   \eqref{mmuBnd} and \eqref{muAsss}.

Open the square, reverse  orders, and apply Poisson  summation:
\beann
\hskip-.6in
|
S_{\ga}(\gt)
|
&\ll&
N^{\gd/2}
\left(
\sum_{y\in\Z^{2}}
\sum_{y'\in\Z^{2}}
\nu_{\ga}(y)
\nu_{\ga}(y')
\sum_{x\in\Z^{2}}
e(\<x,y-y'\>\gt)
\Upsilon\left({x\over N^{1/2}}\right)
\right)^{1/2}
\\
&\ll&
N^{\gd/2}
\left(
\sum_{y\in\Z^{2}}
\sum_{y'\in\Z^{2}}
\nu_{\ga}(y)
\nu_{\ga}(y')
\sum_{k\in\Z^{2}}
\int_{x\in\R^{2}}
e(\<x,y-y'\>\gt)
\Upsilon\left({x\over N^{1/2}}\right)
e(-\<x,k\>)
dx
\right)^{1/2}
\\
&\ll&
N^{\gd/2}
N^{1/2}
\left(
\sum_{y\in\Z^{2}}
\sum_{y'\in\Z^{2}}
\nu_{\ga}(y)
\nu_{\ga}(y')
\sum_{k\in\Z^{2}}
\hat\Upsilon
\left(
N^{1/2}(\gt(y-y')-k))
\right)
\right)^{1/2}
.
\eeann
Since  $\supp\hat\Upsilon\in B_{\frac1{10}}$, there is at most one contribution in the $k$ sum, which is of size $\ll |\hat \Upsilon(0)|\ll1
$, and only occurs if 
$$
\|
\gt(y-y'))
\|
\le {1\over 10 N^{1/2}}.\footnote{
Here $\|\cdot\|$ is the distance to the 
nearest lattice point in $\Z^{2}$.}
$$
As $\gt=\frac aq+\gb$, we have
$$
\left|
\frac aq (y-y')
\right|
\le
|
\gt (y-y')
|
+
|\gb (y-y')|
.
$$
Note that since $\nu_{\ga}$ has support in a rectangle whose side lengths are $\le 
\frac 14 N^{1/2}$, we can bound 
$$
|y-y'|\le \frac{\sqrt2}4 N^{1/2}
.
$$
Since
$$
%\frac1N\le 
|\gb|\le {1\over qN^{1/2}},
$$
we have
$$
|\gb(y-y')|
\le
{1\over qN^{1/2}}
\frac{\sqrt 2}4N^{1/2}
\le
\frac{\sqrt 2}{4q}
.
$$
Using 
$
\|
\gt (y-y') 
\|
\le 
{1\over 10 N^{1/2}}
$
and $q<N^{1/2}$
gives 
$$
\|
\frac aq (y-y')
\|
\le
%\left(
{1\over 10 N^{1/2}}
+
{ \sqrt 2 \over 4 q}
%\right)
%\frac1{N^{1/2}}
,
$$
or
$$
\|
a (y-y')
\|
\le
%\left(
{1\over 10 }
{q\over N^{1/2}}
+
\frac{ \sqrt 2}4
%\right)
%\frac q{N^{1/2}}
<
0.5
,
$$
and hence 
$$
y\equiv y'(\mod q)
.
$$
Moreover,
$$
\| \gt(y-y')\|
=
{
|\gb(y-y')|
\le
{1\over 10 N^{1/2}}
.
}
$$
Putting everything together %and using $\nu\ll N^{\gd\gs}$ 
gives
\be\label{eq:sqrt}%ann
|S_{N}(\gt)|
\ll
N^{\gd/2}
N^{1/2}
\left(
\sum_{y\in\Z^{2}} \nu(y)
\sum_{y' \in\Z^{2} \atop y\equiv y'(q), |y-y'|\le {1\over 10|\gb|N^{1/2}} }
\nu(y')
\right)^{1/2}
.
\ee%ann

Let $|\gb|=\frac K{N}$, and assume that $K\gg1$ (the opposite case is handled 
similarly). We will analyze the  innermost sum as follows. 

Increase $\nu_{\ga}$ to all of $\nu$.
Fix $y$ and recall the definition of $\nu$. We wish to bound
\beann
\hskip-.7in
\sum_{y' \in\Z^{2} \atop y\equiv y'(q), |y-y'|\le {1\over 10|\gb|N^{1/2}} }
\nu(y')
&=&
\sum_{\vp\in\Pi}
\sum_{\g\in\G\atop\|\g\|<N^{\gs}}
%\bo\bigg\{y'=v_{0}\g\vp\bigg\}
\bo\bigg\{v_{0}\g\vp\equiv y(q), |v_{0}\g\vp-y|\le {1\over 10|\gb|N^{1/2}} \bigg\}
\\
&\le
&
\sum_{\g\in\G\atop\|\g\|<N^{\gs}}
\sum_{\vp\in\G\atop\|\vp\|<N^{1/2-\gs}}
%\bo\bigg\{y'=v_{0}\g\vp\bigg\}
\bo\bigg\{(c,d)\vp\equiv y(q), |(c,d)\vp-y|\le {1\over 10|\gb|N^{1/2}} \bigg\}
,
\eeann
where we have written $(c,d)=v_{0}\g$ and relaxed the condition $\vp\in\Pi$ (which 
constrains the starting letter of $\vp$ in addition to its norm) to just $\vp\in\G$ with $
\|\vp\|<N^{1/2-\gs}$.

Continuing to hold $y$ fixed, we also fix $(c,d)$, that is, fix $\g$. By throwing away 
small $n$'s into an exceptional set $\fE(N)$, we may restrict to those $y$ for which 
$|y|\asymp N^{1/2}$ and those $\g$ for which $|(c,d)|\asymp N^{\gs}$.
Now we are in position to apply Theorem \ref{thm:UpBnd} to the inner sum over $
\vp$. %Dropping the second (smaller) error term gives
$$
\sum_{\vp\in\G\atop\|\vp\|<N^{1/2-\gs}}
\bo\bigg\{(c,d)\vp\equiv y(q), |(c,d)\vp-y|\le {N^{1/2}\over 10K} \bigg\}
\ll
{
N^{\gd-2\gd\gs}
\over
K^{1+\gd} q^{2}
}
+
N^{(1/2-\gs)(\frac67 2\gd+\frac5{21})}
%N^{\gd-2\gd\gs-\vep_{0}}
.
$$

The sum over $\g$ contributes $N^{2\gd\gs}$ and the sum over $y$ contributes 
$N^{\gd}$.  %, leading to the bound $N^{\gd}K^{-2}q^{-2}$. 
Inserting everything into \eqref{eq:sqrt} gives
\beann
S_{N}(\gt)
&\ll& 
N^{\gd/2}
N^{1/2}
\Bigg(
N^{\gd}
N^{2\gd\gs}
\bigg(
{
N^{\gd-2\gd\gs}
\over
K^{1+\gd} q^{2}
}
+
N^{(1-2\gs)(\frac67 \gd+\frac5{42})}
\bigg)
\Bigg)^{1/2}
\\
&\ll& 
N^{\gd/2}
N^{1/2}
\Bigg(
{
N^{\gd}
\over
K^{(1+\gd)/2} q
}
+
N^{\gd/2}
N^{\gd\gs}
N^{(1/2-\gs)(\frac67 \gd+\frac5{42})}
\Bigg)
\\
&\ll& 
{
N^{(3\gd+1)/2}
\over
K^{(1+\gd)/2} q
}
+
N^{\gd+1/2+\gd\gs+(1/2-\gs)(\frac67 \gd+\frac5{42})}
.
\eeann
This proves \eqref{Sbnd1b}.

\

\

\

\

%%%%%%%%%%%%%%%%%%%%%%%%%%%%%%%%%%%%%%%%%%%%
%%%

%This completes the proof.

%\newpage

\section{Minor Arcs II: Average $|S_{N}(\gt)|$ over $P_{\gb}$}\label{secMin2}

Now we fix $\gb$ and average $S_{N}(\gt)$.

\begin{thm}\label{thm:min2}
Recall that $\gt=\frac aq+\gb$, with $\foh Q\le q< Q<N^{1/2}$ and $|\gb|<{1\over 
qN^{1/2}}$.
Fix $\gb$ with $|\gb|<{2\over Q N^{1/2}}$%\sim {K\over N}$, $K<N^{1/2}$
, and  let
$$
P_{Q,\gb}:=\bigg\{
\gt=\frac aq+\gb:
q\sim Q, (a,q)=1
\bigg\}
,
$$
so $|P|\asymp Q^{2}$.
Then 
\be\label{Sbnd3}
\sum_{\gt\in P_{Q,\gb}}
|S_{N}(\gt)|
\ll_{\vep}
N^{\gd+1+\vep}
Q
\bigg(
Q^{-1/2}
+
N^{-\gs}
+
N^{-\gs/2-1/2}
Q
\bigg)
.
\ee
%for any $\vep>0$.
\end{thm}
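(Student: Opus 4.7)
The plan is to bound $\Sigma := \sum_{\gt \in P_{Q,\gb}} |S_N(\gt)|$ by Cauchy--Schwarz followed by the Montgomery--Vaughan large sieve, thereby reducing matters to a bilinear-energy estimate which is in turn controlled by the orbital-counting Theorem \ref{thm:UpBnd}. Since $|P_{Q,\gb}| \asymp Q^{2}$, Cauchy--Schwarz gives $\Sigma^{2}\ll Q^{2}\sum_{\gt\in P_{Q,\gb}}|S_N(\gt)|^{2}$. The points $\gt=a/q+\gb$ with $q\sim Q$, $(a,q)=1$ are $\gg 1/Q^{2}$-separated on $\R/\Z$, since the common shift $\gb$ is harmless and distinct Farey fractions $a/q\ne a'/q'$ with $q,q'\le Q$ satisfy $|a/q-a'/q'|\ge 1/(qq')\gg 1/Q^{2}$. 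Expanding $S_N(\gt)=\sum_{|n|\le N}c_n\, e(n\gt)$ with
$$
c_n:=\sum_{x,y:\,\<x,y\>=n}\mu(x)\,\nu(y),
$$
the large-sieve inequality yields $\sum_{\gt\in P_{Q,\gb}}|S_N(\gt)|^{2}\ll (N+Q^{2})\sum_n |c_n|^{2}$.

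The technical heart of the proof is the estimate of the additive-energy quantity
$$
\cE:=\sum_n |c_n|^{2}=\sum_{\substack{x,x',y,y'\\\<x,y\>=\<x',y'\>}}\mu(x)\mu(x')\nu(y)\nu(y'),
$$
which I would bound by $\cE\le R\cdot\sum_n c_n=R\cdot N^{2\gd}$, where $R:=\max_n c_n$ is the pointwise supremum. Unfolding $c_n$ via the definitions of $\mu,\nu$ (and recalling $\mu\le 1$, $\nu\ll N^{\gd\gs}$), $R$ counts weighted triples $(\xi,\varpi,\g)\in\Xi\times\Pi\times(\G\cap\{\|\g\|<N^{\gs}\})$ satisfying the matrix-entry identity $\<v_0\g\varpi,w_0\,^t\xi\>=n$. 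Fixing $\xi,\varpi$, this condition takes the form $\<v_0\g,w\>=n$ with $w:=w_0\,^t\xi\,^t\varpi$ a vector of norm $\asymp N^{1-\gs}$ for typical $\xi,\varpi$. Applying Theorem \ref{thm:UpBnd} after a suitable rescaling (so that its $\g$-ball $\{\|\g\|<N^{1/2-\gs}\}$ becomes our $\{\|\g\|<N^{\gs}\}$, with $(c,d)=v_0$ and $K$ chosen so that the window has unit size) bounds the inner count by a main orbital term plus a spectral-gap error term. Summing over $(\xi,\varpi)$ via $|\Xi|\asymp N^{\gd}$ and $|\Pi|\asymp N^{\gd(1-2\gs)}$ produces a two-term bound on $R$, which, combined with the two regimes $N\ge Q^{2}$ versus $N\le Q^{2}$ in the large-sieve factor $(N+Q^{2})$, yields exactly the three summands $Q^{-1/2}$, $N^{-\gs}$, $N^{-\gs/2-1/2}Q$ after optimising which piece dominates in each regime.

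The main obstacle is the energy estimate, and within it the precise rescaled application of Theorem \ref{thm:UpBnd}. The hypotheses on the sizes of $v_0$, $w$, and the $\g$-ball must be verified carefully after rescaling; the two terms in the conclusion of Theorem \ref{thm:UpBnd} must be tracked cleanly through the sum over $(\xi,\varpi)$ so that the three exponents of the target bound emerge correctly; and sextuples with degenerate $w$ or $w'$ (i.e.\ $|w|\ll N^{1-\gs}$, falling outside the hypotheses) must be handled separately, typically by absorbing the corresponding small values of $|n|$ into an exceptional set as in \S\ref{secMajor}. A final subtlety is to ensure that the divisor-type $N^{\vep}$ factors arising from Ramanujan-sum bookkeeping in the large-sieve step do not swallow the $N^{-\gs}$-type savings produced by the orbital count.
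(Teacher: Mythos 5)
Your plan — Cauchy--Schwarz on the $\gt$-sum, then Montgomery--Vaughan large sieve, then the trivial energy bound $\cE\le R\cdot N^{2\gd}$ with $R=\max_n c_n$ controlled by Theorem~\ref{thm:UpBnd} — is a genuinely different route from the paper's, but it is too lossy, for several compounding reasons.

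First, the large sieve discards the crucial structural feature of $P_{Q,\gb}$, namely that \emph{all} points share the same Archimedean shift $\gb$. The paper's argument applies Cauchy--Schwarz only on the $x$-sum (with a smooth majorant $\Upsilon$), obtaining after Poisson a bilinear condition $\|y\gt-y'\gt'\|<\tfrac1{10}N^{-(1-\gs)}$ over pairs $\gt=\tfrac aq+\gb$, $\gt'=\tfrac{a'}{q'}+\gb$ with a common $\gb$. This is what makes the Archimedean contribution cancel and forces the \emph{congruence} $y_2'y_1-y_1'y_2\equiv 0\pmod{\operatorname{lcm}(q,q')}$, and in Region (i) even forces $q=q'$ — conditions vastly more restrictive than $1/Q^2$-separation. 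The large sieve only sees the separation and cannot produce this gain.

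Second, even granting an optimal energy estimate $\cE\approx N^{4\gd-1}$, the chain $\Sigma^2\ll Q^2(N+Q^2)\cE\asymp Q^2 N\cE$ (since $Q<N^{1/2}$ forces $Q^2<N$ — so there is in fact only one ``regime,'' not two) would give $\Sigma^2\ll Q^2N^{4\gd}$, whereas at $Q\asymp N^{1/2}$ the claimed bound is $\Sigma^2\ll N^{2\gd+3-\gs}$. The ratio of exponents is $4\gd+1-(2\gd+3-\gs)=2(\gd-1)+\gs$, which is strictly positive for the paper's parameters ($\gd\approx0.99995$, $\gs\approx 0.01$). So the Cauchy--Schwarz/large-sieve combination cannot recover \eqref{Sbnd3} even in principle.

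Third, the trivial energy bound $\cE\le R\cdot N^{2\gd}$ is far worse than optimal. For fixed $(\xi,\vp)$ with $w=w_0{}^t(\xi\vp)$, $|w|\asymp N^{1-\gs}$, the line $\<v_0\g,w\>=n$ meets the ball $|v_0\g|<N^{\gs}$ in $O(1)$ lattice points (spacing $|w|\gg N^{\gs}$), so $R\ll|\Xi||\Pi|\asymp N^{2\gd(1-\gs)}$ and hence $\cE\ll N^{4\gd-2\gd\gs}$, a factor $N^{1-2\gd\gs}$ above the heuristic $N^{4\gd-1}$. And the proposal to control $R$ via Theorem~\ref{thm:UpBnd} is a structural mismatch: that theorem bounds counts of $\g$ with $(c,d)\g$ in a \emph{ball and residue class}, not on a line $\<v_0\g,w\>=n$; no choice of $K$ or rescaling converts one into the other.

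In short, the missing idea is the exploitation of the common $\gb$ to turn proximity into congruence, which is the entire point of parameterizing the minor arcs via $P_{Q,\gb}$; the large sieve is blind to it.
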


\

Write
\beann
S_{N}(\gt)
&=&
\sum_{
\xi\in\Xi
} 
\
\sum_{\varpi\in\Pi} 
\
\sum_{
\g\in\G
\atop
\|\g\|<
N^{\gs}
} 
e( 
\<v_{0}\cdot
\g \xi\vp
,w_{0}\> 
\gt)
\\
&=&
\sum_{
\xi\in\Xi
,\varpi\in\Pi} 
\
\sum_{
\g\in\G
\atop
\|\g\|<
N^{\gs}
} 
e(\
\<v_{0}
\g \
, w_{0}\,^{t}(\xi \vp) \> \
\gt)
\\
&=&
\sum_{x\in B_{N^{1-\gs}}}
\sum_{y\in B_{N^{\gs}}}
\mu(x)
\nu(y)
e(\<x,y\>\gt)
,
\eeann
where 
  $\mu$ and $\nu$ are 
now
   measures with $\supp\mu\subset B_{N^{1-\gs}}$ and $\supp
\nu\subset B_{N^{\gs}}$ defined by:
$$
\mu(x)
=
\sum_{\xi\in\Xi,\vp\in\Pi}\bo(x=w_{0}(\xi\vp)^{*})
,
$$
$$
\nu(y)
=
\sum_{\g\in\G\atop \|\g\|<N^{\gs}}\bo(y=v_{0}\g).
$$
Since products of the form $\xi\vp$ are unique, we have
$$%\be\label{muBnd}
\mu,\
\nu
\le
1
.
$$%\ee

Write $|S_{N}(\gt)|=\gz(\gt)S(\gt)$, where $|\gz(\gt)|=1$. Then for any $\gW\subset
[0,1]$,
$$
\int_{\gW}|S_{N}(\gt)|d\gt
=
\int_{\gW}\gz(\gt)S_{N}(\gt)d\gt
=
\sum_{x}
\mu(x)
\sum_{y}
\int_{\gW}
\gz(\gt)
\nu(y)
e(\<x,y\>\gt)
d\gt
.
$$
Recall that $\Upsilon\in C^{\infty}(\R^{2})$ is a smooth, non-negative function which is at 
least $1$ in the unit square $[-1,1]^{2}$ with $\supp\hat\Upsilon\in B_{\frac1{10}}$.
Apply Cauchy-Schwarz, and insert $\Upsilon$ to retain the condition that $\supp\mu
\subset B_{N^{1-\gs}}$.
\bea
\nonumber
\int_{\gW}
|S_{N}(\gt)|
d\gt
&\ll&
\left(
\sum_{x}
\mu(x)^{2}
\right)^{1/2}
\left(
\sum_{x}
\left|
\sum_{y}
\int_{\gW}
\nu(y)
\gz(\gt)
e(\<x,y\>\gt)
d\gt
\right|^{2}
\Upsilon\left(\frac x{N^{1-\gs}}\right)
\right)^{1/2}
\\
\nonumber
&\ll&
N^{\gd(1-\gs)}
\Bigg(
\sum_{y}
\sum_{y'}
\int_{\gt\in\gW}
\int_{\gt'\in\gW}
\gz(\gt)
\overline{
\gz(\gt')}
\nu(y)
\nu(y')
\\
&&
\label{MinUseTwice}
\hskip1in
\times
\sum_{x}
e(\<x,y\gt-y'\gt'\>)
\Upsilon\left(\frac x{N^{1-\gs}}\right)
d\gt d\gt'
\Bigg)^{1/2}
\eea
In the above we used the bound 
$$
\sum_{x}
\mu(x)^{2}
\le
\sum_{x}
\mu(x)
\ll
N^{2\gd(1-\gs)}
.
$$

Write \eqref{MinUseTwice} with $\gW=P$ (and the integral as a sum).
Apply Poisson summation in the $x$ sum and use $\supp\hat\Upsilon\in B_{1/10}$, 
together with $|\gz|\le1$:
\be\label{SgtBndII}
\sum_{\gt\in P}|S_{N}(\gt)|
\ll
N^{\gd(1-\gs)}
N^{1-\gs}
\Bigg(
\sum_{y}
\sum_{y'}
\nu(y)\nu(y')
\sum_{\gt\in P}
\sum_{\gt'\in P}
\bo_{\{\|y\gt-y'\gt'\|<{1\over 10 N^{1-\gs}}\}}
\Bigg)^{1/2}
.
\ee
%(Here we used that $\nu$ is supported on points of the form $y=v_{0}\g_{2}^{t}$, 
%which are never zero, $y\neq(0,0)$ and primitive.)

As $\nu\le1$ and supported on primitive vectors in  $B_{N^{\gs}}$, our task is then 
to count the number, say $A$, of points in the parentheses.

The set $A$ contains
\begin{enumerate}
\item
 lattice points $y=(y_{1},y_{2})\in B_{N^{\gs}}$,  and $y'=(y_{1}',y_{2}')\in B_{N^
{\gs}}$;
 and 
 \item
 points on the circle $\gt=\frac aq+\gb$ and $\gt'=\frac{a'}{q'}+\gb$ (same $\gb$ -- 
this is the key!).
 \end{enumerate}
which satisfy:
\begin{enumerate}
\item 
$|y|,|y'|<N^{\gs}$, primitive vectors,
\item 
 $y,y'\neq(0,0)$; and
\item
$
\|y_{1}\gt-y'_{1}\gt'\|<{1\over 10 N^{1-\gs}}
$,
 and
$
\|y_{2}\gt-y'_{2}\gt'\|<{1\over 10 N^{1-\gs}}
$,
where $\|\cdot\|$ is the distance to nearest integer.
\end{enumerate}

We note first that 
\beann
\left\|
\left(
y_{2}'
y_{1}
-
y'_{1}
y_{2}
\right)
\frac aq
\right\|
&=&
\left\|
y_{2}'
\left(
y_{1}\frac aq
-
y'_{1}{a'\over q'}
\right)
-
y'_{1}
\left(
y_{2}\frac aq
-
y'_{2}{a'\over q'}
\right)
\right\|
\\
&\le&
\left\|
y_{2}'
\left(
y_{1}\frac aq
-
y'_{1}{a'\over q'}
\right)
\right\|
+
\left\|
y'_{1}
\left(
y_{2}\frac aq
-
y'_{2}{a'\over q'}
\right)
\right\|
\eeann
%%%%%%%%%%%%
and that,
since $|\gb|<{2\over QN^{1/2}}$,
%%%%%%%%
\beann
\left\|
y_{1}\frac aq
-
y_{1}'{a'\over q'}
\right\|
&\le&
\left\|
y_{1}\gt
-
y_{1}'\gt'
\right\|
+
|
\gb
(y_{1}-y_{1}')
|
\\
&\le&
{1
\over
10 N^{1-\gs}}
+
{2\over Q N^{1/2}}
\cdot
2
N^{\gs}
.
\eeann
%%%%%%%%%%%%%%%%%%%
Therefore
%%%%%%%%%%%%%%%%%%
$$
\left\|
y'_{2}
\left(
y_{1}\frac aq
-
y_{1}'{a'\over q'}
\right)
\right\|
\le
{1
\over
10 N}
+
{2\over Q N^{1/2}}
\cdot
2
N^{2\gs}
,
$$
%%%%%%%%%%%%%%%%%%%%%%%%
and hence
%%%%%%%%%%%%%%%%%%%%%%%%%
$$
\left\|
\left(
y_{2}'
y_{1}
-
y'_{1}
y_{2}
\right)
\frac aq
\right\|
\le
2
\left(
{1
\over
10 N}
+
{2\over Q N^{1/2}}
\cdot
2
N^{2\gs}
\right)
,
$$
%%%%%%%%%%%%%%%%%%%%%%%%%%
or
%%%%%%%%%%%%%%%%%%%%%%%%%
$$
\left\|
\left(
y_{2}'
y_{1}
-
y'_{1}
y_{2}
\right)
a
\right\|
\le
2
\left(
{q
\over
10 N}
+
{2q\over Q N^{1/2}}
\cdot
2
N^{2\gs}
\right)
\le
{1
\over
5 N^{1/2}}
+
8{1\over  N^{1/2}}
N^{2\gs}
.
$$
Choose $\gs$ so that $N^{\gs}<\frac14 N^{1/4}$, say. (this is where the condition $
\gs\approx \frac14$ arises, cf. \eqref{gsBnd1}). That is,
\be\label{gsBnd2}
{
\gs<{1/4}-{\log4\over \log N}.
}
\ee
%Its final value will be much less than $1/4$.

Then the right hand side is $<1$, and so
$$
{
y_{2}'
y_{1}
-
y'_{1}
y_{2}
\equiv 
0
(q)
.
}
$$
In the same way, we deduce that
$$
y_{2}'
y_{1}
-
y'_{1}
y_{2}
\equiv 
0
(q')
,
$$
and hence
$$
{
y_{2}'
y_{1}
-
y'_{1}
y_{2}
\equiv 
0
(\tilde q)
,
}
$$
where $\foh Q\le \tilde q\le Q^{2}$ is the least common multiple of $q$ and $q'$.

The rest of the analysis breaks down into three regions: 
Either 
\begin{enumerate}
\item[(i)]
$
y_{1}y_{2}'
-
y_{2}y_{1}'
\neq
0
$
(but is $\equiv 0(\tilde q)$); or
\item[(ii)]
$
y_{1}y_{2}'
-
y_{2}y_{1}'
=0
$
but $y_{1}y_{2}y_{1}'y_{2}'\neq 0$; or
\item[(iii)]
$
y_{1}y_{2}'
-
y_{2}y_{1}'
=0
$
and 
$
y_{1}y_{2}
y_{1}'y_{2}'
=
0
.
$
\end{enumerate}
%The last can happen if, say,  $y_{1}=0$ (so $y_{2}=\pm1$ by primitivity) and 
%$y'=(0,\pm1)$.
%$y_{1}'=0$ (so $y_{2}'=\pm1$).
%

We handle these separately.

\subsection{Region (i)}
\
 
\begin{prop}\label{Reg1}
The contribution to $A$ from Region (i) is
$$
\ll N^{(1+\gd)2\gs%4\gs%\gd
}Q
$$
\end{prop}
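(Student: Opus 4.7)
The plan is to enumerate the contributing quadruples $(y, y', \theta, \theta')$ in two layers: first summing over primitive pairs $(y, y')$, and then counting for each fixed such pair the admissible $(\theta, \theta') \in P_{Q,\beta} \times P_{Q,\beta}$ satisfying both norm conditions. For the outer layer, note that $y, y' \in \mathrm{supp}(\nu)$; since $\Gamma$ has no parabolics, the map $\gamma \mapsto v_0\gamma$ is injective on $\Gamma$, so $|\mathrm{supp}(\nu)|$ equals the orbital count $\#\{\gamma \in \Gamma : \|\gamma\| < N^\sigma\} \ll N^{2\delta\sigma}$ by Lax--Phillips. This gives $\ll N^{4\delta\sigma}$ pairs $(y, y')$.

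For the inner count, fix $(y, y')$ with $D := y_1 y_2' - y_2 y_1' \neq 0$, so $1 \leq |D| \leq 2 N^{2\sigma}$. The two norm conditions say that $L(\theta, \theta') \in [-\epsilon, \epsilon]^2 + \mathbb{Z}^2$, where $L = \bigl(\begin{smallmatrix} y_1 & -y_1' \\ y_2 & -y_2' \end{smallmatrix}\bigr)$ has $\det L = -D$ and $\epsilon \asymp N^{\sigma-1}$. I would majorize the indicator $\mathbf{1}\{\|r_i\| < \epsilon\}$ by a smooth function $\phi(r_i) = \sum_k c_k e(k r_i)$ with $c_0 \ll \epsilon$ and rapidly decaying tail $|c_k| \ll \epsilon \min(1, (\epsilon|k|)^{-2})$. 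Expanding the product $\phi(r_1)\phi(r_2)$ and summing over $(a, a')$ with $\theta = a/q + \beta$, $\theta' = a'/q' + \beta$ yields a product of Ramanujan sums $c_q(X) c_{q'}(-Y)$, where $X = k_1 y_1 + k_2 y_2$ and $Y = k_1 y_1' + k_2 y_2'$; summing over $q, q' \sim Q$ produces $\ll \tau(X)\tau(Y) Q^2$. The diagonal Fourier term $(k_1, k_2) = (0, 0)$ contributes $c_0^2 \cdot (\sum_{q \sim Q} \varphi(q))^2 \ll \epsilon^2 Q^4 \ll Q^4 N^{2\sigma - 2}$. The crucial observation is that $D \neq 0$ forces $(k_1, k_2) = (0, 0)$ to be the \emph{only} solution of $X = Y = 0$, so off-diagonal terms are controlled by the decay of $c_k$.

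Assembling, the total contribution to $A$ from Region (i) is $\ll N^{4\delta\sigma} \cdot Q^4 N^{2\sigma - 2}$, and comparing with the target $N^{(1+\delta)2\sigma} Q$, the ratio is $N^{2\delta\sigma - 2} Q^3$, which is $\leq 1$ in our range since $Q \leq N^{1/2}$ and $\delta\sigma \leq 1/4$. The main technical obstacle will be controlling the off-diagonal Fourier contributions $\sum_{(k_1, k_2) \neq (0,0)} |c_{k_1} c_{k_2}| \tau(X)\tau(Y) Q^2$ uniformly: one must separately handle the two loci in $\mathbb{Z}^2$ where $X = 0$ or $Y = 0$, which are the lines spanned by $(y_2, -y_1)$ and $(y_2', -y_1')$ respectively; the condition $D \neq 0$ guarantees these lines meet only at the origin, which is what makes the diagonal term dominant.
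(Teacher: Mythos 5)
Your approach is genuinely different from the paper's, but it has a real gap that goes beyond the "technical obstacle" you flag at the end.

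You first count all pairs $(y,y')$ freely, getting $\ll N^{4\delta\sigma}$, and then try to bound the number of admissible $(\theta,\theta')$ per fixed pair by a Fourier majorant argument. The paper instead extracts \emph{structure}: starting from the norm conditions, it shows $\|y_1 a/q - y_1' a'/q'\|$ vanishes exactly, then uses primitivity of $y$ to deduce (writing $q=dq_1$, $q'=dq_1'$, $(q_1,q_1')=1$) that $q_1\mid y_1,y_2$ forces $q_1=1$, hence $q=q'$; this yields $y_1a\equiv y_1'a'$ and $y_2a\equiv y_2'a'\pmod q$, pinning $y'$ to a single residue class mod $q$ and giving $N^{2\sigma}/q^2$ choices. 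The count $Q\cdot q^2\cdot N^{2\sigma\delta}\cdot N^{2\sigma}q^{-2}=N^{2\sigma(1+\delta)}Q$ then follows. Your argument never uses the constraint $y_2'y_1-y_1'y_2\equiv 0\pmod{\tilde q}$ that the norm conditions impose; you only use $D\neq 0$ to say that the two lines $X=0$ and $Y=0$ in $(k_1,k_2)$-space meet only at the origin.

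The gap is that this does \emph{not} make the off-diagonal terms small, and in fact the diagonal term $\epsilon^2 Q^4\asymp Q^4 N^{2\sigma-2}$ that you highlight is \emph{not} the dominant one. On the line $X=0$, i.e.\ $(k_1,k_2)=m(y_2,-y_1)$ with $m\neq 0$, the $\theta$-sum gives $\sum_{q\sim Q}\varphi(q)\asymp Q^2$ with no decay. The Fourier coefficients $c_{my_1}, c_{my_2}$ are of full size $\asymp\epsilon$ for $|m|\lesssim N^{1-2\sigma}$ (when $|y_1|,|y_2|\asymp N^\sigma$), and $\sum_{q'\sim Q}|c_{q'}(mD)|\ll Q\tau(mD)$. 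Summing, that line contributes $\gg \epsilon^2 N^{1-2\sigma}\cdot Q^2\cdot Q=N^{-1}Q^3$ per pair, which exceeds your diagonal by a factor $N^{1-2\sigma}/Q\geq N^{1/2-2\sigma}>1$ since $\sigma<1/4$ and $Q\leq N^{1/2}$. Worse, for generic $(k_1,k_2)$ with $X,Y\neq 0$, the crude bound $\sum_{q\sim Q}|c_q(X)|\ll Q\tau(X)$ gives $\ll Q^2 N^\epsilon\cdot\bigl(\sum_k|c_k|\bigr)^2\ll Q^2 N^\epsilon$ per pair; multiplied by $N^{4\delta\sigma}$ pairs and compared with the target $N^{2\sigma(1+\delta)}Q$, the ratio is $N^{2\delta\sigma-2\sigma+\epsilon}Q$, which for $Q$ near $N^{1/2}$ is $N^{1/2-2\sigma(1-\delta)+\epsilon}\gg 1$. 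So the off-diagonal overwhelms the target, and the claim that "off-diagonal terms are controlled by the decay of $c_k$" is false: the decay only kicks in past $|k|\gtrsim N^{1-\sigma}$, and there are far too many full-size coefficients in the range $|k|\lesssim N^{1-\sigma}$.

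What is missing is precisely the arithmetic input the paper uses: the congruence $y_2'y_1-y_1'y_2\equiv 0\pmod{\tilde q}$ (derived directly from the norm conditions) together with primitivity of $y$ and $y'$, which collapses $q'$ to $q$ and forces $y'$ into a residue class mod $q$. This cannot be recovered from absolute-value bounds on Ramanujan sums; one needs the structural identity, not a majorant estimate. Your final sentence correctly identifies the two loci $X=0$ and $Y=0$ as the problem, but the difficulty there is not a technicality you can chase with $\tau$-bounds --- it is the whole content of the proposition.
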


The proof is as follows. Write
$$
\tilde q
\mid
\left(
y_{2}'
y_{1}
-
y'_{1}
y_{2}
\right)
,
$$
and
$$
y_{1}y_{2}'
-
y_{2}y_{1}'
\neq
0
,
$$
so in particular $\tilde q\le 2 N^{2\gs}$. Recall also that $\tilde q<Q^{2}$, and hence
$$
\tilde q\le \min (Q^{2},2N^{2\gs})\le Q\sqrt 2 N^{\gs}
.
$$

\begin{lem}
$$
\|
y_{1}\frac aq
-
y'_{1}{a'\over q'}
\|
=0.
$$
\end{lem}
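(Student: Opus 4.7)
\medskip

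\textbf{Proof plan.} The strategy is to bound $\|y_1\tfrac aq - y_1'\tfrac{a'}{q'}\|$ from above by a quantity strictly smaller than $1/\tilde q$, where $\tilde q = \operatorname{lcm}(q,q')$; since $y_1 a/q - y_1' a'/q'$ is a rational with denominator dividing $\tilde q$, any nonzero distance to $\Z$ is at least $1/\tilde q$, which will force the distance to be exactly $0$.

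\medskip

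\textbf{Step 1.} Use the triangle inequality together with $\gt = a/q + \gb$ and $\gt' = a'/q' + \gb$ to write
\[
\left\|y_1\tfrac aq - y_1'\tfrac{a'}{q'}\right\|
\le \|y_1\gt - y_1'\gt'\| + |\gb|\cdot|y_1 - y_1'|.
\]
The first term is bounded by $\tfrac1{10N^{1-\gs}}$ by the standing hypothesis of the count $A$, and the second term is bounded by $\tfrac{2}{QN^{1/2}}\cdot 2N^{\gs}$ using $|\gb|<\tfrac 2{QN^{1/2}}$ and $|y_1|,|y_1'|\le N^{\gs}$.

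\medskip

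\textbf{Step 2.} Multiply through by $\tilde q$, which in Region (i) satisfies $\tilde q \le \sqrt 2\, Q N^{\gs}$. This gives
\[
\tilde q\left\|y_1\tfrac aq - y_1'\tfrac{a'}{q'}\right\|
\le \frac{\sqrt 2\, QN^{\gs}}{10 N^{1-\gs}} + \frac{4\sqrt 2\, QN^{2\gs}}{QN^{1/2}}
\ll N^{2\gs - 1/2}.
\]
Here the first term used $Q < N^{1/2}$.

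\medskip

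\textbf{Step 3.} Invoke the restriction \eqref{gsBnd2}, namely $N^{\gs} < \tfrac14 N^{1/4}$, to deduce that the right-hand side of the previous display is strictly less than $1$. Since $y_1 a/q - y_1' a'/q'$ has denominator dividing $\tilde q$, the quantity $\tilde q\|y_1 a/q - y_1' a'/q'\|$ is a nonnegative integer; being $<1$, it must equal $0$, which gives the claim.

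\medskip

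\textbf{Main obstacle.} The whole argument hinges on having the product $\tilde q \cdot (\text{error})$ small, which is precisely why $\gs$ must be constrained below $1/4$; this is the one delicate step and is the reason the bound $N^{\gs} < \tfrac14 N^{1/4}$ was imposed in \eqref{gsBnd2}. Everything else is just the triangle inequality plus tracking the sizes of $q$, $q'$, $y_1$, $y_1'$, $\gb$ from the hypotheses defining the point set $A$ and Region (i).
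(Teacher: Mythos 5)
Your proof is correct and follows essentially the same route as the paper: bound $\|y_1\tfrac aq - y_1'\tfrac{a'}{q'}\|$ by the triangle inequality using $\gt=\tfrac aq+\gb$, $\gt'=\tfrac{a'}{q'}+\gb$, and show the result is strictly below $1/\tilde q$ using $\tilde q\le\sqrt2\,QN^{\gs}$, $Q<N^{1/2}$, and the constraint \eqref{gsBnd2}. The paper phrases this as a contradiction with the lower bound $1/\tilde q$ for a nonzero rational of denominator dividing $\tilde q$, while you phrase it as "$\tilde q\|\cdot\|$ is a nonnegative integer less than $1$"; these are the same argument.
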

\pf
Assume not, then 
$
\|
y_{1}\frac aq
-
y'_{1}{a'\over q'}
\|
$
is at least $1/\tilde q$. But then (using $|\gb|<{2\over QN^{1/2}}$),
\beann
{1\over Q\sqrt 2 N^{\gs}}
&\le&
{1\over \tilde q}
\le
\left\|
y_{1}\frac aq
-
y'_{1}{a'\over q'}
\right\|
\le
\|
y_{1}\gt%\frac aq
-
y'_{1}\gt'%{a'\over q'}
\|
+
|
\gb(
y_{1}
-
y'_{1})
|
\\
&\le&
{1\over 10 N^{1-\gs}}
+
{2\over Q N^{1/2}}
2N^{\gs}
,
\eeann
or (using $Q<N^{1/2}$ and $N^{2\gs}<{1\over 16} N^{1/2}$),
$$
{1\over \sqrt 2 }
\le
{Q N^{2\gs}\over 10 N}
+
{2\over  N^{1/2}}
2N^{2\gs}
\le
{Q {1\over 16}N^{1/2}\over 10 N}
+
{2\over  N^{1/2}}
2{1\over 16}N^{1/2}
\le
{1\over 160}+{1\over 4},
$$
which is obviously a contradiction. %(Numerically, $0.7\le 0.26$)
\epf

Now we have that 
$$
y_{1}\frac aq \equiv y'_{1}{a'\over q'}(\mod 1)
.
$$
The same argument of course applies to $y_{2}, \ y'_{2}$, that is,
$$
y_{2}\frac aq \equiv y'_{2}{a'\over q'}(\mod 1)
$$

Some more notation: Let $d=(q,q')$ and write $q=dq_{1}$, $q'=dq'_{1}$, with $(q_
{1},q'_{1})=1$. Recall that $(a,q)=1$.
Hence
\be\label{dqq}
y_{1} a{q'_{1}} \equiv y'_{1}{a' q_{1}}(\mod dq_{1}q'_{1})
.
\ee
\

Looking mod $q_{1}$ gives
$$
y_{1} a{q'_{1}} \equiv 0(\mod q_{1})
,
$$
which forces
$$
{
q_{1}\mid y_{1},
}
$$
since $(q_{1},aq'_{1})=1$. The same argument applies to show that
$$
q_{1}\mid y_{2},
\qquad
q'_{1}\mid y'_{1},
\qquad
\text{ and }
\qquad
q'_{1}\mid y'_{2}.
$$
But since $y$ is a primitive vector, $(y_{1},y_{2})=1$, and hence 
$$
q_{1}=1
\qquad
\text{ and }
\qquad
d=q
.
$$
By the same token, $q'_{1}=1$ and $d=q'$, so in fact (!)
$$
{
q=q'.
}
$$

Then \eqref{dqq} and its companion become
$$
y_{1}a\equiv y'_{1}a'(q),
\qquad
\text{ and }
\qquad
y_{2}a\equiv y'_{2}a'(q).
$$

We count the contribution to $A$ as follows. There are $\ll Q$ choices for $q$, then 
$\ll q^{2}$ choices for $a, a'$. There are $\ll N^{2\gs\gd}$ choices for primitive pairs 
$(y_{1},y_{2})$. Then $y'_{1}$ and $y'_{2}$ are determined mod $q$, and hence 
there are $\ll N^{2\gs}q^{-2}$ choices for them, crudely (we are not using any spectral theory here!). Altogether, the contribution  is
\be\label{A1}
\ll
\sum_{q\sim Q}
q^{2}
N^{2\gs\gd}
{N^{2\gs}\over q^{2}}
\ll
N^{2\gs(1+\gd)}Q
%\ll
%NQ
.
\ee 

This proves Proposition \ref{Reg1}.

\subsection{\bf Region (ii)}
\

\begin{prop}\label{Reg2}
The contribution to $A$ from Region (ii) is
$$
\ll_{\vep}
N^{2\gs\gd+\vep}
Q^{2}
+
N^{2\gs\gd+\gs-1+\vep}
Q^{4}
,
$$
for any $\vep>0$.
%(since $Q^{2}<N$).
\end{prop}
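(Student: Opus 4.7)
The plan is as follows. Since $y$ and $y'$ are primitive lattice vectors in $\Z^{2}$ with $y_{1}y_{2}'=y_{2}y_{1}'$ and $y_{1}y_{2}y_{1}'y_{2}'\neq 0$, the two reduced fractions $y_{1}/y_{2}$ and $y_{1}'/y_{2}'$ must coincide, forcing $y'=\pm y$. I will work out the sub-case $y'=y$; the case $y'=-y$ is entirely parallel, handled by replacing $\gt-\gt'$ with $\gt+\gt'$ throughout.

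Write $\phi:=\gt-\gt'$. The defining conditions become $\|y_{i}\phi\|<(10N^{1-\gs})^{-1}$ for $i=1,2$. Let $k_{i}\in\Z$ be the closest integer to $y_{i}\phi$; then
$$
|k_{1}y_{2}-k_{2}y_{1}|\le |y_{2}|\,\|y_{1}\phi\|+|y_{1}|\,\|y_{2}\phi\|\ll N^{\gs}\cdot N^{\gs-1}=N^{2\gs-1}<1,
$$
using $\gs<1/4$. Hence $k_{1}y_{2}=k_{2}y_{1}$, and primitivity of $y$ forces $k_{i}=m\,y_{i}$ for a common integer $m$. Thus $\phi$ lies within $\eta:=(10N^{1-\gs}|y|_{\infty})^{-1}$ of $m$, where $|y|_{\infty}:=\max(|y_{1}|,|y_{2}|)$. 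The two componentwise conditions have collapsed to the single scalar bound $\|\phi\|<\eta$.

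Next, I would count pairs $(\gt,\gt')\in P_{Q,\gb}\times P_{Q,\gb}$ satisfying $\|\gt-\gt'\|<\eta$. Writing $\gt-\gt'\equiv a/q-a'/q'\pmod 1$ and setting $\tilde q:=[q,q']$, the resulting rational lives in $\tfrac{1}{\tilde q}\Z/\Z$. A standard counting argument shows that for fixed $q,q'\sim Q$, the number of admissible pairs $(a,a')\in(\Z/q)^{\times}\times(\Z/q')^{\times}$ is $\ll \gcd(q,q')(1+\tilde q\eta)\ll \gcd(q,q')+qq'\eta$. Summing over $q,q'\sim Q$ and invoking the standard bound $\sum_{q,q'\sim Q}\gcd(q,q')\ll Q^{2}\log Q\ll Q^{2}N^{\vep}$ yields $\ll Q^{2}N^{\vep}+Q^{4}\eta$ pairs for each fixed $y$.

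Finally, summing over $y\in\mathrm{supp}(\nu)$, which has size $\ll N^{2\gs\gd}$ by $\nu\le 1$ and Lax--Phillips applied to $\G$ (via $y=v_{0}\g$, $\|\g\|<N^{\gs}$), and using the trivial bound $|y|_{\infty}\ge 1$ in the second summand, gives the claimed $A|_{\mathrm{(ii)}}\ll N^{2\gs\gd+\vep}Q^{2}+N^{2\gs\gd+\gs-1+\vep}Q^{4}$. The main technical point is the collapse of the two componentwise conditions on $y_{i}\phi$ to the single scalar condition $\|\phi\|<\eta$; once that is in hand, the remainder is routine Farey-fraction bookkeeping, and the $y'=-y$ sub-case contributes only an overall factor of $2$.
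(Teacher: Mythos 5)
Your proof is correct and reaches the stated bound, but the counting step is organized differently from the paper's. Both arguments begin identically: primitivity together with $y_1y_2'=y_2y_1'$ forces $y'=\pm y$, giving $\ll N^{2\gs\gd}$ choices of $y$. After that, the paper works with a single coordinate: it sets $q_1=(y_1,q)$, $q_1'=(y_1,q')$, fixes $(a',q')$, and counts the admissible $a/q$ by a mesh argument on the fractions $z_1a/q_2$; the divisor sum over $q_1,q_1'\mid y_1$ is what produces the $N^{\vep}$. You instead use both coordinates to collapse the two conditions $\|y_i(\gt\mp\gt')\|<(10N^{1-\gs})^{-1}$ into the single scalar condition $\|\gt\mp\gt'\|<\eta$ with $\eta=(10N^{1-\gs}|y|_\infty)^{-1}$, and then run a Farey/gcd count over pairs in $P_{Q,\gb}$; your $N^{\vep}$ comes from $\sum_{q,q'\sim Q}\gcd(q,q')\ll Q^2\log Q$. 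Your route is arguably cleaner and slightly stronger at the intermediate stage (the factor $|y|_{\infty}^{-1}$ in $\eta$ is extra savings that you then discard by bounding $|y|_\infty\ge1$), while the paper's one-coordinate argument would survive even if only one of the two proximity conditions were available. One small point you pass over too quickly: in the sub-case $y'=-y$ the relevant quantity is $\gt+\gt'=\frac aq+\frac{a'}{q'}+2\gb$, so the $\gb$'s do not cancel as they do for $\gt-\gt'$; the condition becomes $\|\frac aq+\frac{a'}{q'}\|<\eta+2|\gb|$, and the extra term contributes $\ll Q^4|\gb|\ll Q^3N^{-1/2}<Q^2$, which is harmless --- but it is not literally ``an overall factor of $2$'' and deserves the one-line check. (In the paper's version this issue is invisible because the $\gb$-dependence is absorbed into the fixed target $\psi$ of the mesh argument.)
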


Recall that in this region, 
$$
y_{1}y_{2}'
-
y_{2}y_{1}'
=
0
,
$$
and so
$$
\tilde q
\mid
\left(
y_{2}'
y_{1}
-
y'_{1}
y_{2}
\right)
$$
is vacuous. We also have in this region that $y_{1},$ $y_{2},$ $y'_{1},$ $y'_{2},$ 
are all non-zero. Moreover, the vectors $y$ and $y'$ are primitive. By unique 
factorization, 
$
y_{1}y_{2}'
=
y_{2}y_{1}'
$ 
forces
$$
{
y_{1}=\pm y'_{1}
,
\qquad
\qquad
y_{2}=\pm y'_{2}
.
}
$$
Hence there are ${\ll N^{2\gs\gd}}$ choices for $y,y'$.

Let $q_{1}=(y_{1},q)$ and $q'_{1}=(y'_{1},q')=(y_{1},q')$. As $q_{1},q'_{1}\mid y_
{1}$, there are ${\ll_{\vep} N^{\vep}}$ choices for $q_{1}$ and $q'_{1}$. Assume 
without loss of generality that $q_{1}\le q'_{1}$. Fix $a'$ and $q'$, for which there are 
\be\label{QQbnd1}
{\ll Q \cdot {Q
\over q_{1}'}}
\ee 
choices.

Write $y_{1}=q_{1}z_{1}$ and $q=q_{1}q_{2}$. Then
$
\|
y_{1}\frac aq
-
y'_{1}{a'\over q'}
+
\gb
(y_{1}-y'_{1})
\|
<
{1\over 10 N^{1-\gs}}
$
becomes
$$
\|
z_{1}\frac a{q_{2}}
-
\psi
\|
<
{1\over 10 N^{1-\gs}}
,
$$
where $\psi=y'_{1}{a'\over q'}
+
\gb
(y_{1}-y'_{1})
$ 
is already  fixed.

The grid in the unit interval  of possible values of 
$z_{1}\dfrac a{q_{2}}$
 as $a$ and $q_{2}$ vary has mesh of size at least 
$$
\dfrac{4q_{1}^{2}}{Q^{2}} %\le\dfrac{1}{ q_{2}^{2}}
.
$$
  Hence the set of values 
of
$z_{1}\dfrac a{q_{2}}$
 satisfying the above proximity to $\psi$ is %at most
\be\label{QQbnd2}
\ll
{
\frac
{Q^{2}}
{4q_{1}^{2}}
{1\over N^{1-\gs}} +1
}
\ee
%(The $+1$ is in case the grid is too small to force more than one value, but one 
%value may still be possible).

Let 
$z_{1}\dfrac a{q_{2}}\equiv \tilde \psi(\mod 1)$ for some fixed grid point $\tilde\psi$. 
Since 
$$
(q_{2},a,z_{1})=1
,
$$ 
this determines $q_{2}$ uniquely. Then $a$ is determined $(\mod q_{2})$, so has 
\be\label{QQbnd3}
{q\over q_{2}}={q_{1}}
\ee 
possible values.

Combining \eqref{QQbnd1}, \eqref{QQbnd2}, and \eqref{QQbnd3}, we have that the contribution to $A$ from Region (ii) is at 
most:
\beann
&&
%\sum_{|y_{1}|=|y'_{1}|<N^{\gs}}\
%\sum_{|y_{2}|=|y'_{2}|<N^{\gs}}\
\sum_{y}\nu(y)
\sum_{y'=\pm y}
\sum_{q_{1}|y_{1}, q_{1}'|y_{1}\atop q_{1}\le q_{1}'\le \min (Q,N^{\gs})}
Q\cdot {Q\over q'_{1}}
\left(
1+
{Q^{2}\over 4 q_{1}^{2}}
{1\over N^{1-\gs}}
\right)
q_{1}
%
%&\ll&
%\sum_{|y_{1}|=|y'_{1}|<N^{\gs}}\
%\sum_{|y_{2}|=|y'_{2}|<N^{\gs}}\
%\sum_{q_{1}|y_{1}, q_{1}'|y_{1}\atop q_{1}\le q_{1}'\le \min (Q,N^{\gs})}
%Q^{2}
%+
%Q^{4}
%{1\over N^{1-\gs}}
\\
&\ll_{\vep}&
N^{2\gs\gd+\vep}
Q^{2}
+
N^{2\gs\gd+\gs-1+\vep}
Q^{4}
,
\\
\eeann
for any $\vep>0$, as claimed.

\subsection{Region (iii):}
\

\begin{prop}\label{Reg3}
The contribution to $A$ from Region (iii) is
$$
\ll
N^{\gs}
Q^{2}
.
$$
\end{prop}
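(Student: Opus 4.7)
The plan is to exploit the algebraic degeneracy of Region (iii) and reduce to a Farey-fraction counting problem on the circle.

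First, I would pin down the possible $(y,y')$. Since $y_{1}y_{2}y'_{1}y'_{2}=0$, some coordinate vanishes; combined with primitivity of $y,y'$ and $y_{1}y'_{2}-y_{2}y'_{1}=0$, this forces $y,y'\in\{(0,\pm 1),(\pm 1,0)\}$ with a matching zero coordinate. For instance, $y_{1}=0$ gives $y=(0,\pm 1)$ by primitivity, and then $-y_{2}y'_{1}=0$ forces $y'_{1}=0$ and hence $y'=(0,\pm 1)$; the other three vanishing cases are symmetric. Thus only $O(1)$ admissible pairs contribute, each weighted by $\nu(y)\nu(y')\le 1$.

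For each such $(y,y')$, one of the two proximity conditions $\|y_{i}\gt-y'_{i}\gt'\|<L$, with $L:=1/(10N^{1-\gs})$, is vacuously $\|0\|=0$, while the other reads $\|\gt\pm\gt'\|<L$, with sign determined by $(y,y')$. The problem reduces to bounding $\#\{(\gt,\gt')\in P_{Q,\gb}\times P_{Q,\gb}:\|\gt\pm\gt'\|<L\}$.

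Writing $\gt=\frac{a}{q}+\gb$ and $\gt'=\frac{a'}{q'}+\gb$, in the ``$-$'' case the diagonal $\gt=\gt'$ forces $(a,q)=(a',q')$ by uniqueness of reduced fractions, contributing $|P_{Q,\gb}|\asymp Q^{2}$. Off-diagonal, $aq'-a'q\neq 0$ and the condition becomes $|aq'-a'q-mqq'|<Lqq'$ for some $m\in\{-1,0,1\}$. For fixed $(q,q')$ with $d:=\gcd(q,q')$, the admissible values of $aq'-a'q$ are multiples of $d$ lying in an interval of length $2Lqq'$ about $mqq'$, numbering $\le 2Lqq'/d+1$, and each value has $\le d$ preimages $(a,a')\in[1,q]\times[1,q']$ by the standard theory of linear Diophantine equations. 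Summing gives
$$
\sum_{q,q'\sim Q}\bigl(Lqq'+d(q,q')\bigr)\ll LQ^{4}+Q^{2}\log Q\ll Q^{2}N^{\gs},
$$
using $Q<N^{1/2}$, which yields $LQ^{4}=Q^{4}/(10N^{1-\gs})\le Q^{2}N^{\gs}$. The ``$+$'' case is identical, with $|2\gb qq'|\le 4Q/N^{1/2}\le 4$ harmlessly absorbed into the interval.

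The main obstacle is avoiding a spurious $Q^{3}$ in the off-diagonal count: a naive per-$\gt'$ estimate of ``$\le Lq+1$ per $q$'' summed first over $q\sim Q$ and then over $\gt'$ yields $LQ^{4}+Q^{3}$, and $Q^{3}$ exceeds $N^{\gs}Q^{2}$ once $Q>N^{\gs}$. The joint count over $(q,q')$ above replaces the additive ``$+1$'' per $\gt'$ by $\sum_{q,q'\sim Q}\gcd(q,q')\ll Q^{2}\log Q$, which stays comfortably within the target $N^{\gs}Q^{2}$.
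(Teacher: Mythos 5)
Your reduction is correct and your final bound is valid; the difference from the paper lies entirely in the counting mechanism for the pairs $(\gt,\gt')$. After both arguments identify the $O(1)$ degenerate pairs $y,y'\in\{(0,\pm1),(\pm1,0)\}$ and reduce to counting $(\gt,\gt')\in P_{Q,\gb}^{2}$ with $\|\gt\pm\gt'\|<L$, $L=1/(10N^{1-\gs})$, the paper simply re-runs its Region (ii) machinery with $q_{1}=q_{1}'=1$: fix $(a',q')$ ($\ll Q^{2}$ choices), note that the fractions $a/q$ with $q\sim Q$ have mutual spacing $\gg Q^{-2}$, so at most $1+O(LQ^{2})$ of them land in the window, and each such fraction determines $q$ uniquely and $a$ up to one choice; this gives $Q^{2}(1+LQ^{2})\ll Q^{2}+Q^{4}N^{\gs-1}\ll Q^{2}N^{\gs}$ with no logarithm. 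You instead sum jointly over $(q,q')$, pass to the linear Diophantine equation $aq'-a'q=c$ with $\gcd(q,q')\mid c$, and bound the additive overhead by $\sum_{q,q'\sim Q}\gcd(q,q')\ll Q^{2}\log Q$. Both routes eliminate the spurious $Q^{3}$ term you correctly flag as the danger; yours is self-contained (it does not lean on the Region (ii) grid argument) at the price of a harmless $\log Q$, which is absorbed since $\gs$ is a fixed positive exponent. Your handling of the ``$+$'' case, where the $2\gb qq'$ shift is $O(1)$ because $|\gb|<2/(QN^{1/2})$, is also fine.
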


Recall that in this region, 
$$
y_{1}y_{2}'
=
y_{2}y_{1}'
,
$$
and  $y_{1}=0$, say. By primitivity, $y_{2}=\pm1$, and by the above, $y_{1}'=0$ 
and again $y_{2}'=\pm1$.

The analysis is now the same as in Region (ii) except $q_{1}=q'_{1}=1$, so there 
are no $\vep$'s. The contribution is
\beann
&\ll &
Q\cdot {Q}
\left(
1+
{Q^{2}\over 4 }
{1\over N^{1-\gs}}
\right)
%\\
%&\ll &
\ll
Q^{2}
+
Q^{4}N^{\gs-1}
\\
&\ll &
Q^{2}
+
Q^{2}N^{\gs}
\ll
Q^{2}N^{\gs}
.
\eeann

\subsection{Conclusion}
\

Combining \eqref{SgtBndII} with Propositions \ref{Reg1}, \ref{Reg2}, and \ref{Reg3}, 
gives
\beann
\sum_{\gt\in P}|S(\gt)|
&\ll&
N^{\gd(1-\gs)}
N^{1-\gs}\
A
^{1/2}
\\
&\ll_{\vep}&
N^{\gd(1-\gs)}
N^{1-\gs}\
\left(
N^{2\gs(1+\gd)%4\gs%\gd
}
Q
+
N^{2\gs\gd+\vep}
Q^{2}
+
N^{2\gs\gd+\gs-1+\vep}
Q^{4}
+
N^{\gs}Q^{2}
\right)
^{1/2}
\\
&\ll_{\vep}&
N^{\gd(1-\gs)}
N^{1-\gs}\
Q
N^{\gs+\gs\gd+\vep
}
\left(
Q^{-1/2}
+
N^{-\gs}
+
N^{-\gs/2-1/2}
Q
\right)
,
\eeann
as claimed.
This completes the proof of Theorem \ref{thm:min2}.

\

\

\

\

%\newpage

%%%%%%%%%%%%%%%%%%%%%%%%%%%%%%%%%%%%%%%%

\section{Minor Arcs III: Average  $|S_{N}|^{2}$ over $W_{Q,K}$}\label{secMin3}

The main goal of this section is to prove

\begin{thm}%\label{Sbnd2}
\label{thm:min3}
\be\label{2g}
\int_{W_{Q,K}}|S_{N}(\gt)|^{2}d\gt
\ll
\log N\
\bigg(
N^{\gd%+\gs
+\gd\gs}
\left\|
S_{N}(\gt)\bigg|_{W_{Q,K}}
\right\|_{L^{\infty}}
+
N^{2\gd+1-\gs}
\bigg)
.
\ee
\end{thm}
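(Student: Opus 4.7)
The plan is to reduce the $L^2$-estimate on $W_{Q,K}$ to an $L^1$-estimate via the trivial pointwise inequality
$$
|S_N(\theta)|^2 \le \left\|S_N|_{W_{Q,K}}\right\|_\infty \cdot |S_N(\theta)|,
$$
which gives
$$
\int_{W_{Q,K}}|S_N(\theta)|^2\,d\theta \le \left\|S_N|_{W_{Q,K}}\right\|_\infty \int_{W_{Q,K}}|S_N(\theta)|\,d\theta.
$$
This produces the $\|S_N\|_\infty$ prefactor in \eqref{2g} cleanly; the remainder of the argument is devoted to bounding the $L^1$-integral on the right.

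To estimate $\int_{W_{Q,K}}|S_N|\,d\theta$, the idea is to discretize the $\beta$-integral at the natural scale. Since $S_N(\theta)$ is a trigonometric polynomial whose frequencies lie in $[-N,N]$, a Marcinkiewicz--Zygmund / Bernstein sampling argument allows the replacement
$$
\int_{W_{Q,K}}|S_N(\theta)|\,d\theta \;\ll\; \frac{1}{N}\sum_{\beta_j \in \frac1N\Z,\;|\beta_j|\sim K/N}\;\sum_{q\sim Q,(a,q)=1}|S_N(a/q+\beta_j)|,
$$
where the $\beta_j$-sum has $\asymp K$ terms. For each fixed $\beta_j$ the inner sum is precisely what Theorem \ref{thm:min2} estimates (the hypothesis $|\beta|<2/(QN^{1/2})$ is forced by the Dirichlet structure of $W_{Q,K}$, which constrains $QK \ll N^{1/2}$). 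Inserting the bound from Theorem \ref{thm:min2}, summing the $\asymp K$ values of $\beta_j$, and multiplying by $1/N$, this should produce a bound of shape $N^{\delta+\delta\sigma}$ (up to the $\log N$ factor) after the factors of $Q,K$ are absorbed via $QK\ll N^{1/2}$. This accounts for the first term in \eqref{2g}.

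The secondary term $N^{2\delta+1-\sigma}$ in \eqref{2g} is expected to come from a ``diagonal'' contribution, which is independent of $\|S_N\|_\infty$. Concretely, expand
$$
|S_N(\theta)|^2 = \sum_{x,x',y,y'}\mu(x)\mu(x')\nu(y)\nu(y')\,e\bigl((\langle x,y\rangle - \langle x',y'\rangle)\theta\bigr),
$$
and integrate over $W_{Q,K}$; the pairs with $\langle x,y\rangle = \langle x',y'\rangle$ contribute $|W_{Q,K}|$ times an additive-energy count, while the remaining pairs are absorbed into the $\|S_N\|_\infty$ term above. Using $\supp\mu\subset B_{N^{1-\sigma}}$, $\supp\nu\subset B_{N^\sigma}$, together with $\mu,\nu\le 1$ and $|W_{Q,K}|\asymp Q^2K/N$, and exploiting that for fixed primitive $y$ the equation $\langle x,y\rangle=n$ restricts $x$ to a single line (so that the linear structure keeps the energy small), should yield a clean bound of size $N^{2\delta+1-\sigma}$.

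The main obstacle I foresee is the combinatorial book-keeping around the discretization and the dyadic decomposition in $|\beta|$ (which is presumably the source of the $\log N$ factor), and ensuring that the energy/diagonal bound is sharp enough to yield the exponent $2\delta+1-\sigma$ rather than a weaker one. A secondary subtlety is the boundary regime $K\lesssim 1$, where the interval $|\beta|\sim K/N$ shrinks below the natural resolution of $S_N$ and the sampling argument must be replaced by a direct pointwise estimate (using Theorem \ref{thm:min1} in place of Theorem \ref{thm:min2}).
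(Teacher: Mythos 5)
Your opening reduction $\int_{W_{Q,K}}|S_N|^2\,d\theta\le \|S_N\|_\infty\int_{W_{Q,K}}|S_N|\,d\theta$ is in the right spirit, but both halves of your plan have genuine gaps. For the $L^1$ integral, the paper does not go through Theorem \ref{thm:min2}; it proves a self-contained lemma valid for any finite union of intervals $\Omega$, namely $\int_\Omega|S_N|\ll\max\bigl(N^{\delta+\delta\sigma},\,N^{\delta+(1-\sigma)/2}|\Omega|^{1/2}\bigr)$, via Cauchy--Schwarz in the $\mu$-variable, Poisson summation, and a dichotomy on whether $\det\bigl(\begin{smallmatrix}y_1&y_2\\-y_1'&-y_2'\end{smallmatrix}\bigr)$ vanishes. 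Your sampling route does not deliver $N^{\delta+\delta\sigma}$: inserting \eqref{Sbnd3} and summing over $\asymp K$ sample points gives a first term of size $KQ^{1/2}N^{\delta+\epsilon}$, which for instance at $Q\sim N^{1/2}$, $K\sim1$ is $N^{\delta+1/4+\epsilon}$, far exceeding $N^{\delta+\delta\sigma}$ since $\sigma<1/4$ (indeed $\sigma\approx 0.0116$ in the end); the factors of $Q,K$ do not cancel as you claim. (Theorem \ref{thm:min2} is instead used in Lemma \ref{S84}, in a different combination with the $L^\infty$ bound.) More importantly, the true $L^1$ bound carries the term $N^{\delta+(1-\sigma)/2}|\Omega|^{1/2}$, and multiplying \emph{that} by $\|S_N\|_\infty$ is unacceptable. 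The paper's key device, absent from your proposal, is the dyadic level-set decomposition $W_{Q,K}=\bigsqcup_\alpha\Omega_\alpha$ according to the size of $|S_N|$ (this is the source of the $\log N$): on each level set $\sup_{\Omega_\alpha}|S_N|\asymp|\Omega_\alpha|^{-1}\int_{\Omega_\alpha}|S_N|$, so when the $|\Omega_\alpha|^{1/2}$ term dominates one applies the $L^1$ lemma twice, the factors $|\Omega_\alpha|^{\pm1/2}$ cancel, and one gets $N^{2\delta+1-\sigma}$ with no $L^\infty$ factor.

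Your alternative derivation of the $N^{2\delta+1-\sigma}$ term via the diagonal also fails quantitatively. There are $\asymp N^{2\delta}$ pairs $(x,y)$ with $\mu(x)\nu(y)>0$ and the values $\langle x,y\rangle$ range over $O(N)$ integers, so by Cauchy--Schwarz the additive energy is $\gg N^{4\delta-1}$; since $|W_{Q,K}|\asymp Q^2K/N$, the diagonal contribution is $\gg Q^2KN^{4\delta-2}$, which at $Q\sim N^{1/2}$, $K\sim1$ is $\gg N^{4\delta-1}\gg N^{2\delta+1-\sigma}$. Finally, ``absorbing the remaining pairs into the $\|S_N\|_\infty$ term'' is not a legitimate step: once the square is expanded, the pointwise inequality $|S_N|^2\le\|S_N\|_\infty|S_N|$ is no longer available, and the off-diagonal sum retains no $L^\infty$ structure to exploit. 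You did correctly flag the degenerate regime $K\lesssim1$, but the two issues above are the substantive obstacles.
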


We begin with a lemma.

\begin{lem}
Let
$$
\gW\subset[0,1]
$$
be a finite union of open intervals.
Then 
\be\label{2c}
\int_{\gW}
|S_{N}(\gt)|
d\gt
\ll
\max
\bigg(
N^{\gd%+\gs
+\gd\gs}
\
,
\
N^{\gd+(1-\gs)/2}
|\gW|^{1/2} 
%
%N^{\gd+\gs/2-\gs\gd+1/2}
%|\gW|^{1/2} 
\bigg)
.
\ee%$$
\end{lem}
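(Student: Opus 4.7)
My approach is to exploit the bilinear structure of $S_{N}(\gt)$ closely following Sections~\ref{secMin1} and \ref{secMin2}. Rewrite
$$
S_{N}(\gt) = \sum_{x, y \in \Z^{2}} \mu(x)\nu(y)\, e(\<x,y\>\gt),
$$
where $\mu(x) = \sum_{\xi \in \Xi,\, \varpi \in \Pi}\bo(x = w_{0}(\xi\varpi)^{*})$ is supported in $B_{N^{1-\gs}}$ with $\mu\le 1$ and $\sum \mu \ll N^{2\gd(1-\gs)}$, while $\nu(y) = \sum_{\g\in\G,\,\|\g\|<N^{\gs}}\bo(y = v_{0}\g)$ is supported on \emph{primitive} vectors in $B_{N^{\gs}}$ (since $v_{0}\g$ inherits primitivity from $\det\g=1$), with $\nu\le 1$ and $\sum \nu \ll N^{2\gd\gs}$. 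Writing $|S_{N}(\gt)| = \gz(\gt)S_{N}(\gt)$ with $|\gz|=1$ and applying Cauchy--Schwarz in the $x$-variable together with a smooth majorant $\Upsilon(x/N^{1-\gs})$ of $\bo_{\supp\mu}$ (with $\hat\Upsilon$ supported in $B_{1/10}$, as in \eqref{MinUseTwice}) gives
$$
\int_{\gW} |S_{N}(\gt)|\, d\gt \le \|\mu\|_{2}\left(\sum_{x\in\Z^{2}}\left|\int_{\gW} \gz(\gt) U_{x}(\gt)\, d\gt\right|^{2} \Upsilon\!\left(\tfrac{x}{N^{1-\gs}}\right)\right)^{1/2},
$$
where $U_{x}(\gt) = \sum_{y} \nu(y) e(\<x,y\>\gt)$ and $\|\mu\|_{2} \le \|\mu\|_{1}^{1/2} \ll N^{\gd(1-\gs)}$.

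Next, open the square, reverse orders, and apply Poisson summation in $x$; the Fourier-support of $\Upsilon$ localizes the contribution pointwise in $(\gt, \gt')$ to pairs $(y, y')$ satisfying $\|y\gt - y'\gt'\|_{\Z^{2}} < \epsilon := 1/(10N^{1-\gs})$. The right-hand side reduces to $\|\mu\|_{2} \cdot N^{1-\gs}$ times the square root of
$$
\sum_{y, y'}\nu(y)\nu(y')\,\mathrm{meas}\bigl\{(\gt, \gt') \in \gW^{2} : \|y\gt - y'\gt'\|_{\Z^{2}} < \epsilon\bigr\}.
$$
Split the $(y, y')$-sum into a \emph{diagonal} part ($y = \pm y'$) and an \emph{off-diagonal} part ($y \ne \pm y'$); primitivity of the $\nu$-support is essential, as it rules out proportional-but-distinct pairs and forces $|y \wedge y'|\ge 1$ off-diagonal. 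For the diagonal, fixing $\gt$ and using any nonzero coordinate of $y$ shows the $\gt'$-slice has measure $\le 2\epsilon$, so the diagonal contribution is $\ll \epsilon|\gW|\sum_{y}\nu(y)^{2} \ll \epsilon|\gW| N^{2\gd\gs}$; tracking this through the outer factors produces precisely the second term $N^{\gd + (1-\gs)/2}|\gW|^{1/2}$.

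For off-diagonal pairs, the linear map $(\gt, \gt')\mapsto y\gt - y'\gt'$ is bijective onto a parallelogram of area $|y\wedge y'|$, so a standard Gauss-type lattice-point count bounds the measure by $\ll \epsilon^{2}\bigl(1 + (|y|+|y'|)/|y\wedge y'|\bigr)$. The constant piece $\epsilon^{2}$, summed against $(\sum\nu)^{2} \le N^{4\gd\gs}$, yields \emph{exactly} the first target term $N^{\gd+\gd\gs}$ after the outer factors. The Jacobian-ratio piece is handled dyadically using the line-count $\sum_{y'\in B_{N^{\gs}}:\, y\wedge y' = k}\nu(y') \ll N^{\gs}/|y|$ summed over $|k| \in [1, N^{2\gs}]$, and contributes only $O(N^{\gd + \gs/2}\log^{1/2} N)$, which is absorbed since $\gd > 5/6 > 1/2$. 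The main delicate point is engineering the Poisson/Cauchy--Schwarz prefactors so the off-diagonal constant piece lands \emph{sharply} on $\gd + \gd\gs$ without an extra $N^{\gs/2}$ loss; this requires the Jacobian analysis above and cleanly separating the two terms in the measure bound.
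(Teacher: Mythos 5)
Your overall strategy is correct and runs parallel to the paper: rewrite $S_N$ bilinearly, apply Cauchy--Schwarz in $x$ with the majorant $\Upsilon$, Poisson-summate, reduce to the measure of pairs $(\gt,\gt')\in\gW^2$ with $\|y\gt-y'\gt'\|<\epsilon$, and split into degenerate ($y=\pm y'$, using primitivity of $\nu$'s support) and non-degenerate ($y\wedge y'\neq 0$) pieces. The diagonal analysis matches the paper exactly and produces the term $N^{\gd+(1-\gs)/2}|\gW|^{1/2}$.

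The one place you diverge is the off-diagonal measure bound, and here your route is genuinely different but more work than necessary. The paper simply observes that when $\det Y\neq 0$, the induced map $Y:\T^2\to\T^2$ is a surjective endomorphism of a compact abelian group and hence \emph{Haar-measure-preserving}; the preimage of the $\epsilon$-box therefore has measure exactly $\ll\epsilon^2=N^{2\gs-2}$, with no secondary term. You instead run a Gauss-type lattice-point count on the parallelogram $L([0,1]^2)$, which produces the weaker bound $\epsilon^2\bigl(1+(|y|+|y'|)/|y\wedge y'|\bigr)$, forcing you to introduce a separate dyadic analysis (over $|y\wedge y'|=k$) and a line-count of the form $\sum_{y':y\wedge y'=k}\nu(y')\ll N^{\gs}/|y|$, and then to verify that the resulting contribution $O(N^{\gd+\gs/2}\log^{1/2}N)$ is dominated by $N^{\gd+\gd\gs}$ because $\gd>1/2$. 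Your computation does close correctly (I checked the exponent bookkeeping, including the $\sum_y\nu(y)/|y|\ll N^{\gs(2\gd-1)}$ bound needed for the line-count sum), so this is a valid alternative. But the "delicate point" you flag at the end is a self-inflicted one: the torus measure-preservation observation collapses the off-diagonal case to a single line and eliminates the Jacobian-ratio term entirely. Worth internalizing that trick — for integer matrices it is always available and always sharper than a bare area-plus-perimeter count.
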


%The proof is as follows.
\pf

Returning to \eqref{MinUseTwice}, apply Poisson summation in the $x$ sum and use $\supp\hat\Upsilon\in B_{1/10}$, 
together with $|\gz|\le1$:
%\be%ann
%\label{CauchPoiss}
$$
\hskip-.3in
\int_{\gW}|S_{N}(\gt)|d\gt
%&
\ll%&
N^{\gd(1-\gs)}
N^{1-\gs}
\left(
\sum_{y}
\sum_{y'}
\nu(y)
\nu(y')
\
meas\bigg\{(\gt,\gt'):
{
\|\gt y_{1}-\gt' y_{1}'\|<{1\over N^{1-\gs}}
\atop
\|\gt y_{2}-\gt' y_{2}'\|<{1\over N^{1-\gs}}
}
\bigg\}
\right)^{1/2}
.
$$%\ee%ann
Let $Y:=\mattwo {y_{1}}
{y_{2}}
{-y_{1}'}
{-y_{2}'}$ and consider two regions:    either $\det Y=0$ or not.

 If not, then $(\gt,\gt')\mapsto (\gt,\gt')Y=(\gt y_{1}-\gt' y_{1}',\gt y_{2}-\gt'y_{2}')$ is 
a map which is measure preserving $(\mod 1\times 1)$. %$\ll N^{2\gs-2}$. 
 Hence the preimage has
 $$
meas\bigg\{(\gt,\gt'):
{
\|\gt y_{1}-\gt' y_{1}'\|<{1\over  N^{1-\gs}}
\atop
\|\gt y_{2}-\gt' y_{2}'\|<{1\over  N^{1-\gs}}
}
\bigg\}
\ll 
%N^{2\gs}
N^{2\gs-2}
, 
 $$
and there are $N^{4\gs\gd}$ choices for $y,y'$.

If on the other hand the determinant of $Y$ is zero, then by primitivity, $y=\pm y'$, so there are only $N^{2\gs
\gd}$ choices. Assume $y_{1}\neq0$. Fix $\gt'\in\gW$ (contributing at most $|\gW|
$); then $\gt$ satisfies $\|y_{1}\gt-\gt_{0}\|<{1\over N^{1-\gs}}$ for some fixed $\gt_
{0}$. Hence the contribution is at most
$$
N^{2\gs\gd}|\gW| N^{\gs-1}
.
$$

Combining the two regions gives
\beann
\int_{\gW}|S_{N}(\gt)|d\gt
&\ll&
N^{\gd(1-\gs)}
N^{1-\gs}
\left(
N^{4\gs\gd}
%N^{2\gs}
N^{2\gs-2}
+
N^{2\gs\gd}|\gW| N^{\gs-1}
\right)^{1/2}
\\
&\ll&
N^{\gd%+\gs
+\gd\gs}
+
N^{\gd+(1-\gs)/2}
|\gW|^{1/2} 
,
\eeann
as claimed.
\epf

%\

%Next we replace the $L^{1}$ norm by an $L^{2}$ norm, but slightly indirectly.

\pf[Proof of Theorem \ref{thm:min3}]
Let $W_{Q,K}=\gW$.
As $|S_{N}(\gt)|\ll N^{2\gd}$, we can take a dyadic subdivision $M\ll N^{2\gd}$ of  $
\ll\log N$ terms, 
and 
decompose 
$
\gW
$
into level sets 
$$
\gW=\bigsqcup_{\ga}\gW_{\ga}
,
$$ 
according to the size of $|S_{N}(\gt)|$.
So if $\gt\in\gW_{\ga}$, then  $\frac M2\le |S_{N}(\gt)|<M$ with $M\ll N^{2\gd}$.

On any such level set, we have
$$
\frac1{|\gW_{\ga}|} \int_{\gW_{\ga}}|S_{N}(\gt)|d\gt 
\asymp 
\sup_{\gt\in\gW_{\ga}} |S_{N}(\gt)|%\bigg|_{\gW_{\ga}}
,
$$
so
\beann
\int_{\gW}|S_{N}(\gt)|^{2}d\gt
&\ll&
\log N\
\sup_{\ga}\
\int_{\gW_{\ga}}|S_{N}(\gt)|^{2}d\gt
\\
&\ll&
\log N\
\sup_{\ga}\
\sup_{\gt\in\gW_{\ga}} |S_{N}(\gt)|
\int_{\gW_{\ga}}|S_{N}(\gt)|d\gt
\\
&\ll&
\log N\
\sup_{\ga}\
\sup_{\gt\in\gW_{\ga}} |S_{N}(\gt)|
\max
\bigg(
N^{\gd%+\gs
+\gd\gs}
,
N^{\gd+(1-\gs)/2}
|\gW_{\ga}|^{1/2} 
\bigg)
\\
&\ll&
\log N\
\max
\bigg(
\sup_{\gt\in\gW} |S_{N}(\gt)|
N^{\gd%+\gs
+\gd\gs}
,
\\
&&
\sup_{\ga}\
N^{\gd+(1-\gs)/2}
|\gW_{\ga}|^{-1/2} 
\int_{\gW_{\ga}}|S_{N}(\gt)|d\gt
\bigg)
\\
&\ll&
\log N\
\max
\bigg(
\sup_{\gt\in\gW} |S_{N}(\gt)|
N^{\gd%+\gs
+\gd\gs}
\
,
\
N^{2\gd+(1-\gs)}
\bigg)
\\
&\ll&
\log N\
\bigg(
N^{\gd%+\gs
+\gd\gs}
\left\|
S_{N}(\gt)\bigg|_{\gW}
\right\|_{L^{\infty}}
+
N^{2\gd+1-\gs}
\bigg)
,
\eeann
where we used \eqref{2c} twice (the second time under the assumption $N^{\gd+
\gd\gs}\ll N^{\gd+(1-\gs)/2}|\gW_{\ga}|^{1/2}$).
This completes the proof.
\epf

\

\

\

\

%\newpage

%%%%%%%%%%%%%%%%%%%%%%%%%%%%%%%%%%%%%%%%%%%%
%%%%

\section{Putting It All Together}\label{secFinish}

Now we combine the previous estimates to show that the minor arcs are small. We will 
write 
$$
Q=N^{\ga}\qquad\qquad\text{ and }\qquad\qquad K=N^{\gk},
$$ 
with $\ga,\gk\in[0,1/2]$.
For the vast majority of $\gt$, the weight $\fm(\gt)$ is identically one. We deal with 
these $\gt$'s
%this region 
first, via the following three Lemmata.

\begin{lem}\label{S86}
As $N\to\infty$,
$$
\int_{W_{Q,K}}|S_{N}(\gt)|^{2}d\gt
\ll 
N^{4\gd-1-\eta}
,
$$
if
\be\label{S86b}
%\boxed
{
\ga+\frac{1+\gd}2 \gk > \frac32(1-\gd)+\gd\gs
,
}
\ee
\be\label{NewEq1}
\gs>2(1-\gd)
,
\ee
and
\be\label{S86a}
%\boxed
{
\gs < {132 \gd - 131\over 96 \delta -10}
}
.
\ee
\end{lem}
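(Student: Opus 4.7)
The strategy is to apply Theorem \ref{thm:min3} to pass from an $L^2$ bound to an $L^\infty$ bound on $W_{Q,K}$, and then to invoke Theorem \ref{thm:min1} to bound that $L^\infty$ norm. Each of the three resulting terms must be shown to be $\ll N^{4\gd-1-\eta}$ for some $\eta>0$, and each of the three hypotheses \eqref{S86b}, \eqref{NewEq1}, \eqref{S86a} is precisely what is needed for one of these terms.

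\textbf{Step 1: $L^2 \to L^\infty$ reduction.} By Theorem \ref{thm:min3},
$$
\int_{W_{Q,K}}|S_{N}(\gt)|^{2}d\gt\ll\log N\Big(N^{\gd+\gd\gs}\|S_N|_{W_{Q,K}}\|_{L^\infty}+N^{2\gd+1-\gs}\Big).
$$
The second term is $\ll N^{4\gd-1-\eta}$ for some $\eta>0$ precisely when $2\gd+1-\gs<4\gd-1$, i.e.\ when $\gs>2(1-\gd)$, which is exactly \eqref{NewEq1}.

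\textbf{Step 2: $L^\infty$ bound on $W_{Q,K}$.} On $W_{Q,K}$ we have $q\sim Q = N^\ga$ and $|\gb|\sim K/N$ with $K = N^\gk$, so Theorem \ref{thm:min1} gives
$$
\|S_N|_{W_{Q,K}}\|_{L^\infty}\ll N^{(3\gd+1)/2}\Big(K^{-(1+\gd)/2}Q^{-1}+N^{-\frac{1}{84}(6\gd-5)(1-2\gs)}\Big).
$$
Multiplying by $N^{\gd+\gd\gs}$ from Step 1, we must control two remaining terms.

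\textbf{Step 3: Matching the two remaining inequalities.} For the first piece we need
$$
N^{\gd+\gd\gs+(3\gd+1)/2}\,K^{-(1+\gd)/2}Q^{-1}\ll N^{4\gd-1-\eta},
$$
and taking logs and rearranging yields $\ga+\tfrac{1+\gd}{2}\gk>\tfrac{3}{2}(1-\gd)+\gd\gs$, which is \eqref{S86b}. For the second piece we need
$$
\gd\gs+\tfrac{3\gd+1}{2}-\tfrac{1}{84}(6\gd-5)(1-2\gs)<3\gd-1;
$$
multiplying through by $84$, collecting the $\gs$ terms, and using $\gd>5/6$ (so $96\gd-10>0$) to divide gives exactly $\gs<(132\gd-131)/(96\gd-10)$, which is \eqref{S86a}.

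\textbf{Main obstacle.} There is no real obstacle here; the proof is a bookkeeping exercise that verifies the three exponents precisely balance. The conceptual work was already done in Theorems \ref{thm:min1} and \ref{thm:min3}: Theorem \ref{thm:min3} is what makes the $L^2$ bound accessible via an $L^\infty$ bound plus a uniform tail, and Theorem \ref{thm:min1} provides the two-term $L^\infty$ estimate whose relative strengths on different ranges of $(Q,K)$ force the appearance of three separate inequalities. The nontrivial content was accepting a loss of $\log N$ in Theorem \ref{thm:min3}, which is harmless since we have a power savings $\eta>0$ to spare in each of the three conditions.
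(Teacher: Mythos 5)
Your proof is correct and follows exactly the paper's argument: substitute the $L^\infty$ bound of Theorem \ref{thm:min1} into the $L^2$ reduction of Theorem \ref{thm:min3} and match each of the three resulting terms against $N^{4\gd-1-\eta}$, which yields precisely \eqref{S86b}, \eqref{S86a}, and \eqref{NewEq1} respectively. The exponent bookkeeping in Step 3 checks out.
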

This Lemma is conclusive if either $K$ or $Q$ is large.
\pf
Putting  \eqref{Sbnd1b} into \eqref{2g} and ignoring log's gives:
$$
\int_{W_{Q,K}}|S_{N}(\gt)|^{2}d\gt
\ll
N^{\gd%+\gs
+\gd\gs}
N^{(3\gd+1)/2}
\bigg(
{
1
\over
K^{(1+\gd)/2} Q
}
+
N^{-\frac1{84}(1-2\gs)(6\gd-5)}
\bigg)
+
N^{2\gd+1-\gs}
.
$$
For the last term of the right-hand side to be bounded by $N^{4\gd-1-\eta}$, we need
$$%\be\label
\gs>2(1-\gd)
.
$$%\ee
%(As $\gs<1/4$, this forces $\gd>7/8$.)
For the middle term above to be bounded by $N^{4\gd-1-\eta}$, we need
$$%\be\label
%\boxed
{
\gs < {132 \gd - 131\over 96 \delta -10}
}
.
$$%\ee
%Together with \eqref{NewEq1}, this requires
%\be\label{gdIS}
%\gd> \frac{1}{24} \left(5+\sqrt{358}\right) \approx0.9967.
%\ee
%Also, we now have the upper bound $\gs<1/86$.
Lastly, the first term above is controlled if
$$%\be\label
%\boxed
{
\ga+\frac{1+\gd}2 \gk > \frac32(1-\gd)+\gd\gs
.
}
$$%\ee
As $\gd$ is very near $1$, this essentially requires that $\ga+\gk>
%1/86
\gs
$ in order to bound the minor arcs outright. 
\epf
%The above lemma almost suffices for all the regions necessary to satisfy \eqref{MajGk} and \eqref{MajGaGk}, but not quite. 
If both $K$ and $Q$ are too small to apply the above, but not small enough to be in the major arcs, we try  the next lemma, which is conclusive if $K$ is almost as small as the major arcs. 

\begin{lem}\label{S82}
As $N\to\infty$,
$$
\int_{W_{Q,K}}|S_{N}(\gt)|^{2}d\gt
\ll 
N^{4\gd-1-\eta}
,
$$
if
\be\label{S82a}
\gk
>
\frac{1-\gd}\gd
,
\ee
and
\be\label{S82b}
{
1-\gd+
\gk+2\ga
<
\frac1{42}(6\gd-5)(1-2\gs)
}
.
\ee
\end{lem}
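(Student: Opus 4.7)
The plan is to prove Lemma \ref{S82} by the crudest possible route: bound the $L^{2}$ integral by the $L^{\infty}$ norm squared times the measure of $W_{Q,K}$, using the pointwise estimate from Theorem \ref{thm:min1}. Unlike Lemma \ref{S86}, where the Theorem \ref{thm:min3} interpolation is used to convert an $L^{\infty}$ bound into an $L^{2}$ bound (essentially picking up only one factor of the $L^{\infty}$ estimate), here $K$ and $Q$ are both so small that $|W_{Q,K}|\asymp Q^{2}K/N$ is tiny, and squaring the pointwise bound is already sharp enough.

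Concretely, I would start from
\[
\int_{W_{Q,K}}|S_{N}(\gt)|^{2}\,d\gt
\;\le\;
\Bigl\|\,S_{N}\big|_{W_{Q,K}}\Bigr\|_{L^{\infty}}^{2}\cdot |W_{Q,K}|,
\]
and insert
\[
|W_{Q,K}|\asymp Q^{2}\,\frac{K}{N}=N^{2\ga+\gk-1},
\]
together with Theorem \ref{thm:min1}, which for $\gt=a/q+\gb\in W_{Q,K}$ (so $q\asymp Q$) gives
\[
|S_{N}(\gt)|\ll N^{(3\gd+1)/2}\bigl(K^{-(1+\gd)/2}Q^{-1}+N^{-\frac{1}{84}(6\gd-5)(1-2\gs)}\bigr).
\]
Squaring and combining with the measure bound, the two cross terms disappear by $(a+b)^{2}\ll a^{2}+b^{2}$, and we obtain
\[
\int_{W_{Q,K}}|S_{N}|^{2}\,d\gt
\;\ll\;
N^{3\gd}K^{-\gd}\;+\;N^{3\gd}\,K\,Q^{2}\,N^{-\frac{1}{42}(6\gd-5)(1-2\gs)}.
\]

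The remaining work is just arithmetic. For the first summand, $N^{3\gd}K^{-\gd}=N^{3\gd-\gd\gk}\ll N^{4\gd-1-\eta}$ iff $\gd\gk>1-\gd$, which is exactly hypothesis \eqref{S82a}. For the second summand, its exponent is $3\gd+\gk+2\ga-\tfrac{1}{42}(6\gd-5)(1-2\gs)$, and demanding this be strictly less than $4\gd-1$ rearranges precisely to hypothesis \eqref{S82b}. Hence both terms beat $N^{4\gd-1}$ by a power of $N$, yielding the desired saving $\eta>0$.

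There is essentially no obstacle here: unlike the proofs of Theorems \ref{thm:min1}--\ref{thm:min3}, no further bilinear-forms input, spectral gap, or orbit count is invoked. The only subtlety is that this trivial $L^{\infty}\cdot$measure approach is only profitable in the regime where both $Q$ and $K$ are small (so that $|W_{Q,K}|$ is small and can absorb the second term in \eqref{Sbnd1b}); when $K$ or $Q$ is large one has to fall back on Lemma \ref{S86}, and the complementary ranges match up so that \eqref{S86b}--\eqref{S86a}, \eqref{S82a}--\eqref{S82b}, together with the major-arc conditions of Theorem \ref{thm:major}, will cover the full dyadic range $1\le Q,K\le N^{1/2}$. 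Verifying this coverage (rather than proving \ref{S82} itself) is the real issue and will be handled separately in \S\ref{secFinish}.
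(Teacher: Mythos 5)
Your proposal is correct and is exactly the paper's argument: the paper's phrase ``using \eqref{Sbnd1b} twice'' means precisely squaring the $L^{\infty}$ bound from Theorem \ref{thm:min1} and multiplying by $|W_{Q,K}|\asymp Q^{2}K/N$, yielding the same two terms $N^{3\gd}K^{-\gd}+N^{3\gd}KQ^{2}N^{-\frac{1}{42}(6\gd-5)(1-2\gs)}$ and the same two exponent inequalities \eqref{S82a}, \eqref{S82b}.
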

\pf
Using  \eqref{Sbnd1b} twice leads to the bound
\beann
\int_{W_{Q,K}}|S_{N}(\gt)|^{2}d\gt
&\ll&
Q^{2}
\frac KN
\bigg(
{
N^{3\gd+1}
\over
K^{1+\gd} Q^{2}
}
+
N^{-\frac1{42}(6\gd-5)(1-2\gs)}
\bigg)
\\
&=&
{
N^{3\gd}
\over
K^{\gd}
}
+
Q^{2} K
N^{3\gd}
N^{-\frac1{42}(6\gd-5)(1-2\gs)}
.
\eeann
In order for this to be sufficient, we need both
$$%\be\label
\gk
>
\frac{1-\gd}\gd
,
$$%\ee
and
$$%\be\label
{
1-\gd+
\gk+2\ga
<
\frac1{42}(6\gd-5)(1-2\gs)
}
,
$$%\ee
as claimed.
\epf

%Lastly, we take care of the case both $K$ and $Q$ are small, but $Q$ is a bit smaller.
The remaining case is when $K$ and $Q$ are too small to apply Lemma \ref{S86}, and $K$ is too small for Lemma \ref{S82} to suffice, that is, when $Q$ is small, but not so small as to be in the major arcs.

\begin{lem}\label{S84}
As $N\to\infty$,
$$
\int_{W_{Q,K}}|S_{N}(\gt)|^{2}d\gt
\ll 
N^{4\gd-1-\eta}
,
$$
if the conditions \eqref{S84a} through \eqref{S84f} are satisfied.
\end{lem}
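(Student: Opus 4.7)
The plan is to handle the remaining regime (small $Q$, and $K$ too small for Lemma \ref{S82}, but $(Q,K)$ not in the major arcs) by combining the pointwise $L^{\infty}$ bound from Theorem \ref{thm:min1} with the $\gb$-averaged $L^{1}$ bound from Theorem \ref{thm:min2}. Specifically, I would start from the elementary inequality
$$
\int_{W_{Q,K}}|S_{N}(\gt)|^{2}d\gt
\le
\left\|S_{N}\bigg|_{W_{Q,K}}\right\|_{L^{\infty}}
\int_{W_{Q,K}}|S_{N}(\gt)|d\gt,
$$
which is the natural object to bound when $K$ is small enough that Theorem \ref{thm:min3} is weaker than the corresponding mean bound.

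Next I would bound the $L^{1}$ factor by slicing $W_{Q,K}$ along the $\gb$-variable. Writing $\gt=\tfrac aq+\gb$ and noting that on $W_{Q,K}$ we have $|\gb|\sim K/N$ and (via Dirichlet) $|\gb|<1/(qN^{1/2})<2/(QN^{1/2})$, Fubini's theorem gives
$$
\int_{W_{Q,K}}|S_{N}(\gt)|d\gt
=
\int_{|\gb|\sim K/N}
\sum_{\gt\in P_{Q,\gb}}|S_{N}(\gt)|\,d\gb.
$$
Theorem \ref{thm:min2} applies to the inner sum for each fixed admissible $\gb$, giving
$$
\int_{W_{Q,K}}|S_{N}(\gt)|d\gt
\ll_{\vep}
N^{\gd+\vep}\,K\,Q
\bigg(Q^{-1/2}+N^{-\gs}+N^{-\gs/2-1/2}Q\bigg).
$$
For the $L^{\infty}$ factor I would simply invoke Theorem \ref{thm:min1}, which gives
$$
\left\|S_{N}\bigg|_{W_{Q,K}}\right\|_{L^{\infty}}
\ll
N^{(3\gd+1)/2}
\bigg({1\over K^{(1+\gd)/2}Q}
+
N^{-\frac1{84}(6\gd-5)(1-2\gs)}\bigg).
$$

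Multiplying the two bounds out produces a sum of six terms (two from the $L^{\infty}$ factor times three from the $L^{1}$ factor), each of which must be $\ll N^{4\gd-1-\eta}$. Substituting $Q=N^{\ga}$, $K=N^{\gk}$, and taking logarithms to base $N$ produces six linear inequalities in $\ga$, $\gk$, $\gs$, $\gd$ — these are precisely the hypotheses \eqref{S84a}–\eqref{S84f} that are to appear in the statement (I would simply read them off). The arithmetic bookkeeping here is routine: each of the six terms has exponent
$$
\tfrac{3\gd+1}2-\tfrac{1+\gd}2\gk-\ga+\gd+\gk+\ga+(\text{correction})
$$
or analogous, and setting this below $4\gd-1$ yields the corresponding constraint; I would expect the ``diagonal'' term $N^{(3\gd+1)/2-\frac{1+\gd}2\gk-\ga}\cdot N^{\gd}KQ\cdot Q^{-1/2}$ (small both factors) and the ``off-diagonal'' term involving $N^{-\frac1{84}(6\gd-5)(1-2\gs)}\cdot N^{\gd}KQ\cdot N^{-\gs/2-1/2}Q$ (both large factors) to provide the binding constraints.

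The main obstacle is bookkeeping rather than mathematics: Lemmas \ref{S86} and \ref{S82} already cover the parameter regimes where $K$ or $Q$ is large, so the present regime is exactly where \emph{neither} factor can be used in isolation and the product structure is essential. The delicate point is to verify that the six inequalities \eqref{S84a}–\eqref{S84f} that fall out of the argument, together with the conditions in Lemmas \ref{S86} and \ref{S82} and the major-arc conditions \eqref{MajGk}, \eqref{MajGaGk}, \eqref{gsBnd1}, \eqref{gsBnd2}, \eqref{NewEq1}, \eqref{S86a}, are simultaneously satisfiable by some $(\ga,\gk,\gs,\eta)$ for $\gd$ above the stated threshold; this is the feasibility check that ultimately pins down the numerical value $\gd>0.9999493550$ in Theorem \ref{thm:main}.
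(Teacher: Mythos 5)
Your proposal is correct and follows essentially the same route as the paper: the paper likewise bounds $\int_{W_{Q,K}}|S_{N}|^{2}\ll \sup|S_{N}|\cdot\frac{K}{N}\sum_{P}|S_{N}(\gt)|$, inserts the $L^{\infty}$ bound \eqref{Sbnd1b} and the $P_{Q,\gb}$-averaged bound \eqref{Sbnd3}, and reads off the six conditions \eqref{S84a}--\eqref{S84f} from the resulting product of a two-term and a three-term factor. Nothing further is needed.
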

\pf
Now add  \eqref{Sbnd1b} to \eqref{Sbnd3}:
\beann
\int_{W_{Q,K}}|S_{N}(\gt)|^{2}d\gt
&\ll&
\sup |S_{N}(\gt)|
\frac KN
\sum_{P}|S_{N}(\gt)|
\\
&\ll&
N^{(3\gd+1)/2}
\bigg(
{
1
\over
K^{(1+\gd)/2} Q
}
+
N^{-{1\over 84}(6\gd-5)(1-2\gs)}
\bigg)
\frac KN\
\\
&&
\times
N^{\gd+1+\vep}
Q
\bigg(
Q^{-1/2}
+
N^{-\gs}
+
N^{-\gs/2-1/2}
Q
\bigg)
.
\eeann
For these to all be conclusive, that is, $\ll N^{4\gd-1-\eta}$, we need the following six conditions.
\be\label{S84a}
{
(1-\gd)\gk + 3(1-\gd)
<
\ga
}
\ee
and
\be\label{S84b}
{
(1-\gd)\gk/2+
3(1-\gd)/2
<
\gs
}
\ee
and
\be\label{S84c}
{
(1-\gd)\gk/2+
3(1-\gd)/2
+
\ga
<
(1+\gs)/2
}
\ee
and
\be\label{S84d}
{
3(1-\gd)/2
+\gk + \ga/2
<
\frac1{84}(6\gd-5)(1-2\gs)
}
\ee
and
\be\label{S84e}
{
3(1-\gd)/2
+\gk +\ga
<
\frac{1}{84}(6\gd-5)(1-2\gs)
+\gs
}
\ee
and
\be\label{S84f}
{
3(1-\gd)/2
+\gk+2\ga
<
\frac1{84}(6\gd-5)(1-2\gs)
+\gs/2+1/2
.
}
\ee
\epf

Lastly, 
dispose of those $\gt$ with non-trivial weights in the minor arcs function $\fm(\gt)$. Recall from \eqref{psiIs} the triangle function $\psi$ used to form $\fM(\gt)$ and $\fm(\gt)$. Note that in $[-1,1]$ the function $1-
\psi(x)$ is just $|x|$. These values of $\gt$ (which should be 
contained in the major arcs but receive some small weights here) are controlled as 
follows:
\beann
\int_{%\gt\atop
\fm(\gt)\neq1}
|\fm(\gt)|^{2}
|S_{N}(\gt)|^{2}
d\gt
&\ll&
\sum_{q<Q}
\sum_{(a,q)=1}
\int_{|\gb|<K/N}
(\gb N/K)^{2}
{
N^{3\gd+1}
\over
(N\gb)^{1+\gd}Q^{2}
}
d\gb
\\
&\ll&
{
N^{3\gd}
\over
K^{\gd}
}
,
\eeann
using the first term in the bound
\eqref{Sbnd1b} ignoring the second term, which was handled % previously)
in \eqref{S82b}.  This is 
estimated in the same way as in Lemma \ref{S82}.

\subsection{Conclusion}\label{S87}
\

We must now collect all of the inequalities needed above and try to make sense of them. 

We need to find values of $\gd$ and $\gs$ such that every pair $(\ga,\gk)$ either lies in the major arcs, that is, satisfies \eqref{MajGk} and \eqref{MajGaGk}, or lies in one of the three regions described in Lemmata \ref{S86}, \ref{S82}, and \ref{S84}.

The main roles in $\ga$ and $\gk$ are played by \eqref{MajGaGk} in the major arcs, and \eqref{S82a} and \eqref{S84a} in the minor arcs. Putting these together with the main condition \eqref{S86b} on $\gd$ and $\gs$ gives the system of inequalities:
\be\label{eq:Region}
\twocase{}
{\gs>2(1-\gd)}{}
{21\left((1-\gd)\left(\dfrac{1-\gd}{\gd}\right)+3(1-\gd)\right) + 13\left(\dfrac{1-\gd}{\gd}\right)<\left(2\gd-\frac53\right)\gs.}{}
\ee
The region in the $(\gd,\gs)$ plane for which \eqref{eq:Region} and \eqref{S86a} are satisfied is depicted in Figure \ref{Fig3}. Solving for the minimal value of $\gd$ in this region, one finds that this value is
The minimal value for $\gd$ is the largest root of the cubic polynomial:
\be\label{eq:gdPoly}
1020 - 8897 x - 5010 x^2 + 12888 x^3,
\ee
whose plot is shown in Fig. \ref{Fig4}. The approximate value is
\be\label{gdRIS}
\gd\approx
0.9999493550
%\frac{835}{6444}+\frac{10252603}{6444 \sqrt[3]{1961384140+1611 i
%   \sqrt{413769332843787}}}+\frac{\sqrt[3]{1961384140+1611 i
%   \sqrt{413769332843787}}}{6444}
   .
\ee

\begin{figure}%[hp]
\begin{center}
\includegraphics[width=2in]{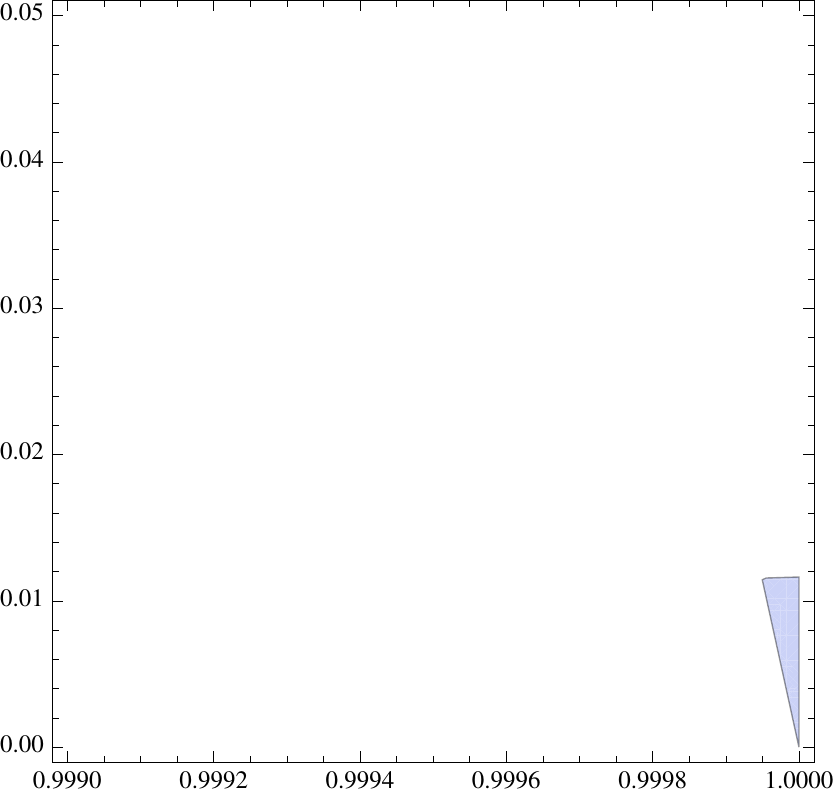}
\end{center}
\caption{Values in the $(\gd,\gs)$ plane satisfying \eqref{eq:Region} and \eqref{S86a}.}
\label{Fig3}
\end{figure}

\begin{figure}%[hp]
\begin{center}
\includegraphics[width=2in]{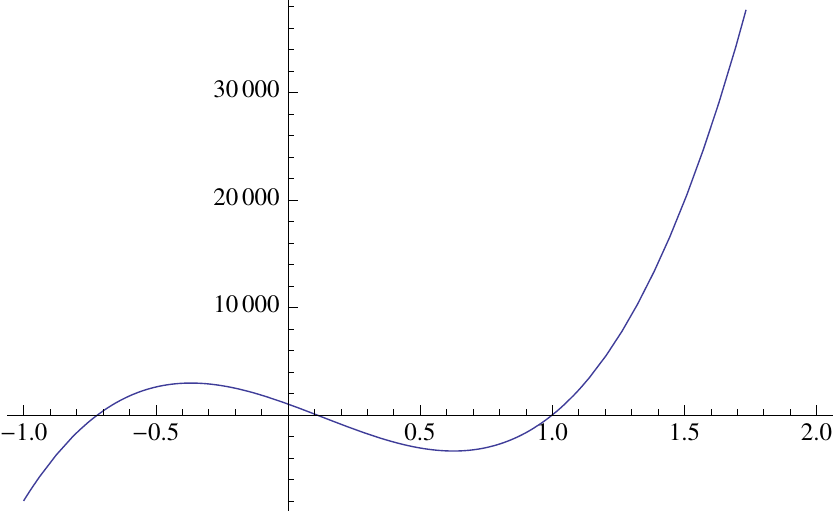}
\end{center}
\caption{The plot of the polynomial \eqref{eq:gdPoly}.}
\label{Fig4}
\end{figure}

The corresponding value of $\gs$ is the root nearest the origin of the polynomial
$$
4995 - 434163 x + 149452 x^2 + 700 x^3,
$$
which takes the approximate value
$$
\gs\approx
0.011550825843
.
$$

With these values of $\gd$ and $\gs$, Fig. \ref{Fig5} shows the overlapping regions, corresponding to the major arcs, and Lemmata \ref{S86}, \ref{S82}, and \ref{S84}.

The largest region corresponds to  Lemma \ref{S86}, where we see the negatively sloped line corresponding to the condition  \eqref{S86a}. The 
%larger 
triangle to the right on the bottom corresponds to Lemma \ref{S82}, since it contains the 
vertical line showing  the inequality $\gk>{1-\gd\over \gd}$. The 
%smaller 
triangle on the left side 
corresponds to  Lemma \ref{S84}. On the magnified image, the vertical line is again Lemma \ref{S82} and the horizontal line is Lemma \ref{S84}. The triangle based at the origin corresponds to the major arcs, with the negatively sloped line being the condition \eqref{MajGaGk}. \\

\begin{figure}%[hp]
\begin{center}
\includegraphics[width=2in]{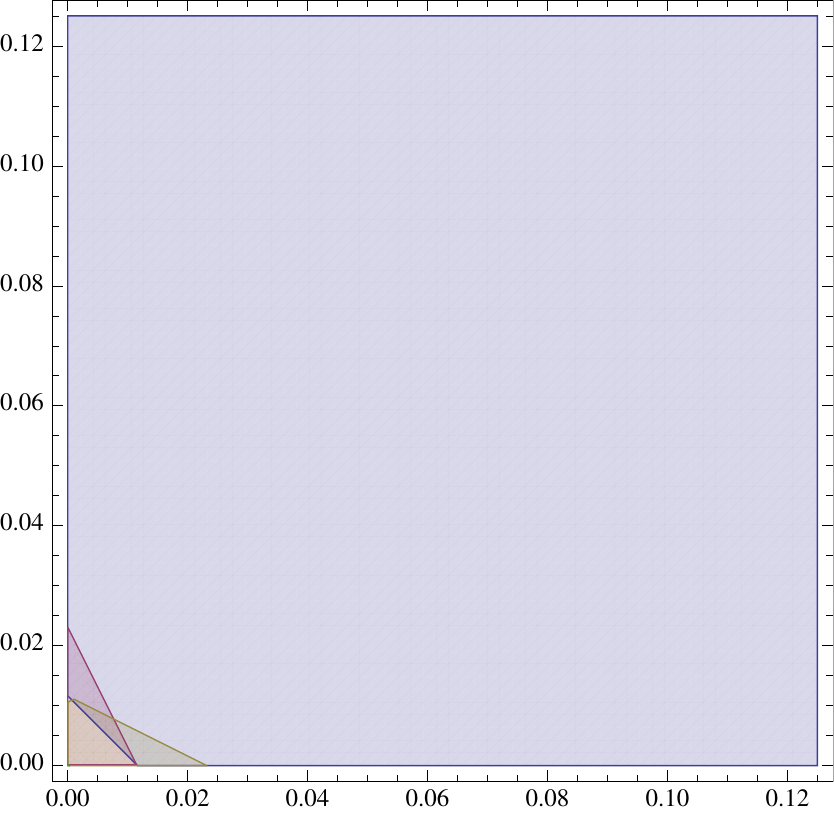}\qquad
\includegraphics[width=2in]{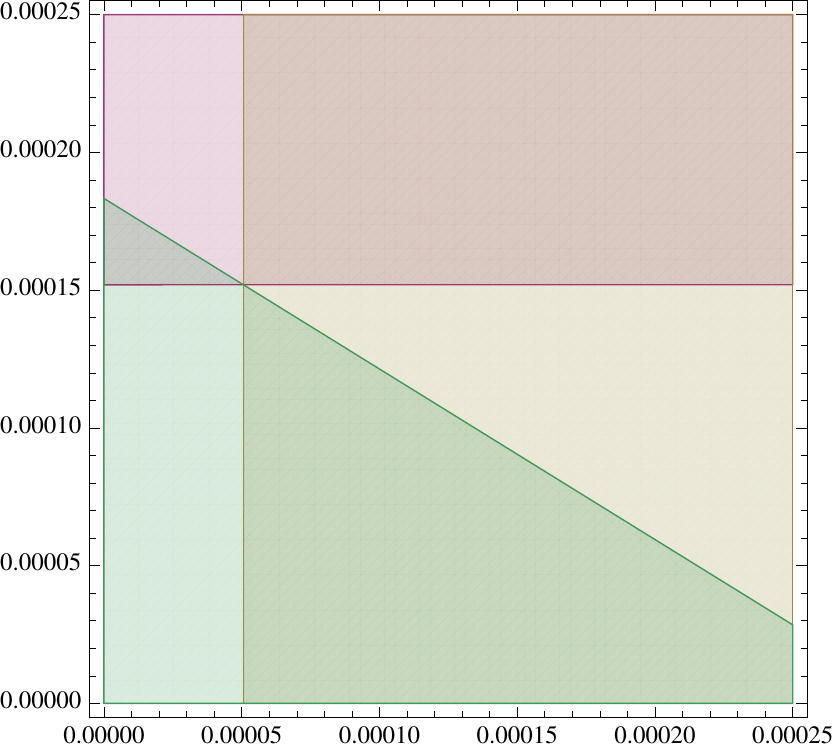}
\end{center}
\caption{Overlapping regions at different magnifications in the $(\gk,\ga)$-plane corresponding to the major arcs and Lemmata \ref{S86}, \ref{S82}, and \ref{S84}.}
\label{Fig5}
\end{figure}

We have proved:

\begin{thm}\label{thm:minor}
Assume that 
$$
\gd>0.9999493550
.
$$ 
Then there is a choice for $\gs$ such that as
$N\to\infty$, there is some $\eta>0$ with
$$
\sum_{|n| < N}|\cE_{N}(n)|^{2}
=
\int_{0
}^{1}
|\fm(\gt)|^{2}
|
S_{N}(\gt)|^{2}
d\gt
\ll 
N^{4\gd-1-\eta}
.
$$
\end{thm}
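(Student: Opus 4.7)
The plan is to apply Parseval to rewrite
$\sum_{|n|<N}|\cE_N(n)|^2 = \int_0^1 |\fm(\gt)|^2 |S_N(\gt)|^2\,d\gt$,
then partition $[0,1]$ according to the location of $\gt$ relative to the major-arc structure. By Dirichlet's theorem every $\gt\in[0,1]$ has an approximation $\gt=\tfrac aq+\gb$ with $q<N^{1/2}$ and $|\gb|<(qN^{1/2})^{-1}$, so each $\gt$ lies in exactly one dyadic window $W_{Q,K}$ with $Q=N^\ga$, $K=N^\gk$, for some $(\ga,\gk)\in[0,\tfrac12]\times[0,\tfrac12]$. I would decompose the integral into the union of these dyadic windows and sum over $O(\log^2 N)$ such pieces, which is absorbed into the $\eta$-savings.

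For each window I would apply whichever of Lemmata \ref{S86}, \ref{S82}, \ref{S84} is available. Lemma \ref{S86} dominates whenever \eqref{S86b} holds, covering the regime where $K$ or $Q$ is large; Lemma \ref{S82} covers a strip with $\gk$ bounded below away from zero (so $K$ is above the major-arc threshold but not large) via \eqref{S82a}--\eqref{S82b}; Lemma \ref{S84} picks up the remaining sliver where $\ga$ is small and $\gk$ is small (but still outside the major arcs), controlled by \eqref{S84a}--\eqref{S84f}. Windows with $(\ga,\gk)$ actually satisfying the major-arc conditions \eqref{MajGk} and \eqref{MajGaGk} contribute only through the weighted cutoff $\fm(\gt)\neq 1,\fm(\gt)\neq 0$; here I would use the triangle-function decay $1-\psi(x)\asymp |x|$ for $|x|<1$ together with the pointwise bound \eqref{Sbnd1b}, which produces a contribution of order $N^{3\gd}/K^\gd$, identical in form to Lemma \ref{S82}.

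The key step is thus geometric/combinatorial: verifying that the union of the four regions in $(\ga,\gk)$-space (major arcs plus three Lemmata) covers all of $[0,\tfrac12]^2$ for admissible $(\gd,\gs)$. This reduces to a system of linear inequalities in $(\ga,\gk)$ with parameters $(\gd,\gs)$, plus the constraints \eqref{S86a}, \eqref{NewEq1}, and \eqref{MajGk} which bound $\gs$ in terms of $\gd$. I expect the constraints to become tight at a single triple intersection point where the major-arc boundary \eqref{MajGaGk}, the Lemma \ref{S82} boundary \eqref{S82a}, and the Lemma \ref{S84} boundary \eqref{S84a} meet; pushing these conditions into \eqref{S86b} and eliminating $\ga, \gk$ yields precisely the cubic $1020-8897x-5010x^2+12888x^3$ in $\gd$, whose largest root is the threshold $\gd\approx 0.9999493550$, with a corresponding optimal $\gs\approx 0.01155$.

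The main obstacle is this optimization: the inequalities are numerous and coupled, and one must check that no additional corner of $[0,\tfrac12]^2$ escapes all four regions. I would verify this graphically (as in Figures \ref{Fig3} and \ref{Fig5}) and then confirm the critical inequalities algebraically at the computed $(\gd,\gs)$. Once the covering is established, combining the four local bounds and summing over the $O(\log^2 N)$ dyadic windows produces a savings $N^{-\eta}$ (for some $\eta>0$ depending on the slack in the inequalities at the optimum) over the trivial $N^{4\gd-1}$, yielding the claimed $\sum_{|n|<N}|\cE_N(n)|^2 \ll N^{4\gd-1-\eta}$.
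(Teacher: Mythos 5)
Your proposal is correct and follows essentially the same route as the paper: a dyadic decomposition into windows $W_{Q,K}$, coverage of the $(\ga,\gk)$-plane by the major arcs together with Lemmata \ref{S86}, \ref{S82}, and \ref{S84}, a separate treatment of the $\fm(\gt)\neq 1$ weights via the triangle function and the first term of \eqref{Sbnd1b}, and the final optimization over $(\gd,\gs)$ yielding the cubic $1020-8897x-5010x^2+12888x^3$. The paper's own conclusion identifies the same binding constraints (\eqref{MajGaGk}, \eqref{S82a}, \eqref{S84a} fed into \eqref{S86b}) that you predict become tight at the triple intersection.
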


We conclude with the standard argument below.

\begin{thm}\label{thm:finish}
Theorems \ref{thm:major} and \ref{thm:minor} imply Theorem \ref{thm:main}.
\end{thm}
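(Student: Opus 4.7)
The plan is a standard second-moment / Chebyshev argument, combining the main-term lower bound of Theorem \ref{thm:major} with the $L^{2}$-average error bound of Theorem \ref{thm:minor}. To avoid the notational clash between the two uses of ``$\fE(N)$'' in the excerpt, write $\fE^{(M)}(N)$ for the exceptional set produced in the statement of Theorem \ref{thm:major}, so that $|\fE^{(M)}(N)|\ll N^{1-\vep_{0}}$, and continue to write $\fE(N)$ for the set of locally-admissible $|n|<N$ with $n\notin\cS$ as in Theorem \ref{thm:main}.

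First I would fix $n\in\fE(N)\setminus\fE^{(M)}(N)$. Since $n$ is admissible and escapes the exceptional set of Theorem \ref{thm:major}, the main term satisfies
$$
\cM_{N}(n)\gg \frac{N^{2\gd-1}}{\log\log(10+|n|)}\gg \frac{N^{2\gd-1}}{\log\log N}.
$$
On the other hand, $n\notin\cS$ means the representation count vanishes, $R_{N}(n)=0$, so the decomposition $R_{N}(n)=\cM_{N}(n)+\cE_{N}(n)$ forces
$$
|\cE_{N}(n)|=\cM_{N}(n)\gg \frac{N^{2\gd-1}}{\log\log N}.
$$

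Squaring, summing over all such $n$, and invoking Theorem \ref{thm:minor} gives
$$
|\fE(N)\setminus\fE^{(M)}(N)|\cdot\frac{N^{4\gd-2}}{(\log\log N)^{2}} \ll \sum_{|n|<N}|\cE_{N}(n)|^{2}\ll N^{4\gd-1-\eta},
$$
which rearranges to
$$
|\fE(N)\setminus\fE^{(M)}(N)|\ll (\log\log N)^{2}\,N^{1-\eta}.
$$
Combining with the bound on $\fE^{(M)}(N)$ yields
$$
|\fE(N)|\le |\fE^{(M)}(N)|+|\fE(N)\setminus\fE^{(M)}(N)|\ll N^{1-\vep_{0}'},
$$
for any $\vep_{0}'<\min(\vep_{0},\eta)$, absorbing the doubly-logarithmic factor into the polynomial saving. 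There is no serious obstacle here: the entire difficulty of Theorem \ref{thm:main} has been absorbed into the matching $N^{4\gd-1}$ thresholds of Theorems \ref{thm:major} and \ref{thm:minor}, and the present step is just the Chebyshev bookkeeping that merges the two exceptional sets and converts the second-moment saving into a power-saving count of unrepresented admissibles.
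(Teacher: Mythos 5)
Your proof is correct and follows essentially the same Chebyshev/second-moment argument as the paper: an unrepresented admissible $n$ has $R_{N}(n)=0$, so $|\cE_{N}(n)|=\cM_{N}(n)\gg N^{2\gd-1}/\log\log N$, and Theorem \ref{thm:minor} then caps the count of such $n$ at $\ll N^{1-\eta}(\log\log N)^{2}$. The only (welcome) difference is that you explicitly separate off the exceptional set from Theorem \ref{thm:major} before running the Chebyshev bound, a bookkeeping point the paper leaves implicit.
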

\begin{proof}
Let $\frak E(N)$ be the set of exceptions up to $N$. Let $\cZ$ denote those integers passing local obstructions. Then
%$$
\begin{align*}%array}{lcrr}
|\frak E(N)|
&=&
{\displaystyle
\sum_{| n|<N\atop {n\in\cZ, |\cE_{N}(n)|>\cM_{N}(n)}}1
}&%\qquad\text{ (by definition)}
\\
&\ll&
{\displaystyle
\sum_{| n|<N\atop |\cE_{N}(n)|\gg {1\over \log\log n}N^{2\gd-1}}1
}&%\qquad{\text{ (using Thm \ref{thmI} and }\atop\text{ dropping %the condition
% $n\in\cZ$)}}
\\
&\ll&
{\displaystyle
\sum_{| n|<N}|\cE_{N}(n)|^{2} (\log\log N)^{2}N^{2-4\gd}
}&%\qquad\text{ (since $1\ll{ |\cE_{N}(n)|N^{2-4\gd} \log\log n}$)}
\\
&\ll&
{\displaystyle
{
N^{1-\eta}
 (\log\log N)^{2}
 }
}.
&
%\qquad\text{ (using Theorem \ref{thmII})}
\end{align*}%rray}
%$$
This completes the proof.
\end{proof}

\bibliographystyle{alpha}

\bibliography{../../AKbibliog}

\end{document}